\newtheorem{theorem}{Theorem}[section]
\newtheorem*{theorem*}{Theorem}
\newtheorem{lemma}[theorem]{Lemma}
\newtheorem{corollary}[theorem]{Corollary}
\newtheorem{proposition}[theorem]{Proposition}
\theoremstyle{definition}
\newtheorem{definition}[theorem]{Definition} 
\newtheorem{question}[theorem]{Question}
\newtheorem{exampleth}[theorem]{Example}
\newenvironment{example}{\begin{exampleth}}{\hfill
    $\diamond$\\ \end{exampleth}}
\newtheorem{remarkth}[theorem]{Remark}
\newenvironment{remark}{\begin{remarkth}}{\hfill $\diamond$\\ \end{remarkth}}
\DeclareMathOperator{\conv}{conv}
\DeclareMathOperator{\trop}{trop}
\DeclareMathOperator{\val}{val}
\DeclareMathOperator{\tcone}{tcone}
\DeclareMathOperator{\cone}{cone}
\DeclareMathOperator{\NP}{NewtonPolytope}
\DeclareMathOperator{\supp}{supp}
\DeclareMathOperator{\Mid}{Mid}
\DeclareMathOperator{\topint}{int}
\newcommand{\R}{\mathbb{R}}
\newcommand{\N}{\mathbb{N}}
\newcommand{\Z}{\mathbb{Z}}
\newcommand{\A}{\mathcal{A}}
\newcommand{\RR}{\mathbb R}
\newcommand{\ZZ}{\mathbb Z}
\newcommand{\NN}{\mathbb N}
\newcommand{\cR}{\mathcal R}
\newcommand{\wh}{\widehat}
\newcommand{\qm}{ {\rm QM}}
\newcommand{\po}{ {\rm PO}}
\NewDocumentCommand{\M}{m m o}{%
  \IfNoValueTF{#3}
    {M^{#1}_{#2}} % If third argument not given
    {M^{#1}_{#2}(#3)} % If third argument given
}
\begin{document}
\title{Moments, Sums of Squares, and Tropicalization}

\author{Grigoriy Blekherman} 
\address{School of Mathematics, Georgia Institute of Technology, Atlanta GA, USA}
\email {greg@math.gatech.edu}
\author{Felipe Rinc\'on}
\address{School of Mathematical Sciences, Queen Mary University of London, London, UK}
\email{f.rincon@qmul.ac.uk}
\author{Rainer Sinn}
\address{Mathematisches Institut, Universit\"at Leipzig, Leipzig, Germany}
\email{rainer.sinn@uni-leipzig.de}
 \author{Cynthia Vinzant}
 \address{Department of Mathematics, University of Washington, Seattle, WA, USA}
 \email{vinzant@uw.edu}
 \author{Josephine Yu}
 \address{School of Mathematics, Georgia Institute of Technology,
        Atlanta GA, USA}
      \email {jyu@math.gatech.edu}

       \maketitle

 \begin{center}
 Dedicated to Bernd Sturmfels on the occasion of his 60th birthday.
\end{center}

\begin{abstract}
We use tropicalization to study the duals to cones of nonnegative
polynomials and sums of squares on a semialgebraic set $S$.   The
truncated cones of moments of measures supported on the set $S$ is dual to
nonnegative polynomials on $S$, while ``pseudo-moments'' are dual to sums
of squares approximations to nonnegative polynomials. We provide explicit combinatorial descriptions of
tropicalizations of the moment and pseudo-moment cones, and
demonstrate their usefulness in distinguishing between nonnegative
polynomials and sums of squares.
We give examples that show new limitations of sums of squares approximations of nonnegative polynomials.
When the semialgebraic set is defined by binomial inequalites, its
moment and pseudo-moment cones are closed under Hadamard product. In this case, their
tropicalizations are polyhedral cones that encode all binomial
inequalities on the moment and pseudo-moment cones.

% --------

% The relationship between nonnegative polynomials and sums of squares
% on a semialgebraic set $S$ is one of the central questions in real
% algebraic geometry. The (convex) dual side of this story is important
% in analysis, where it is known as the truncated $S$-moment problem,
% and it considers the truncated cones of moments which are dual to
% nonnegative polynomials, and ``pseudo-moments'' which are dual to sums
% of squares. We use tropicalization to understand these classical
% problems. We provide explicit combinatorial descriptions of
% tropicalizations of the moment and pseudo-moment cones, and
% demonstrate their usefulness in distinguishing between nonnegative
% polynomials and sums of squares.  Our results show new limitations of sums of squares approximations of nonnegative polynomials. 

 \end{abstract}

% !TEX root = MAIN.tex

\section{Introduction}
Understanding nonnegativity of polynomials in terms of sums of squares has been a central challenge in real algebraic geometry dating back to the work of Hilbert. The dual side of this problem is important in analysis and known as the moment problem. We now take a moment to introduce it.

For a semialgebraic set $S\subseteq \RR^n$ and a finite subset $A\subset \N^n$ considered as exponent vectors of monomials, we consider the convex cone $M_A(S)$ of $A$-moments of measures supported on $S$:
\[M_A(S) = \left\{ \left(\int_S x^a d\mu \right)_{a \in A} : \mu \text{ is a nonnegative Borel measure supported on } S\right\}.\]
The membership testing problem for $M_A(S)$ is known as the (truncated) moment problem. 
Despite extensive work this cone can be explicitly described in very few situations even when $S=\mathbb{R}^n$ and $A$ corresponds to all moments of degree at most $2d$ \cite{MR1303090, MR1147276,MR3782989, SchmudgenBook}.  

The convex dual cone of $M_A(S)$ consists of polynomials with support in $A$ which are nonnegative on $S$. An important tool for understanding $M_A(S)$ comes from \emph{Positivstellens\"atze} in real algebraic geometry: theorems on representing the nonnegative polynomials via sums of squares \cite{MR1092173,MR1254128}.
A subset of the nonnegative cone is the cone $\Sigma(S)_A$  of ``obviously nonnegative'' polynomials generated by sums of squares.
We will call elements in its dual $\Sigma(S)_A^\vee$ ``pseudo-moments''. Tropicalization of the moment cone and pseudo-moment cone gives us ``combinatorial shadows'' of these sets. Our explicit descriptions of these shadows lead to interesting combinatorial questions, some of which have been considered in the context of SONC polynomials \cite{Reznick89,IlimanDeWolff, katthan2021unified}.

Another way of understanding our results is through binomial inequalities in moments and pseudo-moments of measures supported on $S$. When the semialgebraic set $S$ is closed under Hadamard multiplication, the tropicalization $\trop M_A(S)$ of the moment cone is a rational polyhedral cone. Its dual cone $(\trop M_A(S))^\vee$ encodes all of the binomial inequalities in $A$-moments. Similarly, binomial moment inequalities that can proved via sums of squares correspond to another rational polyhedral cone, which may depend on a degree bound for the sums of squares construction. While polynomial inequalities valid on $M_A(S)$ are difficult to characterize, we can explicitly describe all binomial inequalities in moments and pseudo-moments by finding the extreme rays of the corresponding rational polyhedral cones. The use of tropicalizations to analyze the power of sums of squares method was first introduced in \cite{BRST} for analyzing graph density inequalities, and further developed in \cite{BR21}. We take inspiration from some of their results and techniques, for instance the use of the Hadamard property to ensure that the tropicalization is a convex cone. However, to the best of our knowledge, this is the first instance where tropicalization is used to study the relationship between the moment and pseudo-moment cones.  We believe that this just scratches the surface of applications of tropicalization in semi-algebraic geometry.

We start with a pair of examples which illustrate our setup and results. 

\begin{example}[Motzkin Configuration on the Nonnegative Orthant]
Let $S=\RR^2_{\geq 0}$ be the nonnegative orthant and let $A \subset \NN^2$ be the \emph{Motzkin configuration}: %(Figure~\ref{fig:motzkintriangle}): 
$A=\{(0,0), (1,2), (2,1), (1,1) \}$, which gives us the exponents of moments we are recording:
\[m_{00}=\int_S 1\, d\mu, \quad m_{12}=\int_S xy^2 \, d\mu, \quad m_{21}=\int_S x^2y \, d\mu, \quad m_{11}=\int_S xy\, d\mu.
\]
There is only one binomial inequality satisfied by $A$-moments of measures supported on $S$:
\begin{equation}\label{eq:logconv} m_{00}m_{12}m_{21}\geq m_{11}^3.\end{equation}
If we regard moments as functions on $A$, then we see that moments are nonnegative \emph{log-convex functions} on $A$, and in fact inequalities coming from log-convexity are the only possibly binomial inequalities in $A$-moments for measures supported on the nonnegative orthant $\mathbb{R}_{\geq 0}^n$ (see \Cref{thm:genmom}).\\

We now consider $A$-pseudo-moments of measures supported on $\RR^n_{\geq 0}$. Pseudo-moments are defined as linear functionals that are nonnegative on ``obviously'' nonnegative polynomials coming from sums of squares (see \Cref{sec:SOS}). We show in \Cref{tropicalizationpseudononneg} that $A$-pseudo-moments of measures supported on $\RR^n_{\geq 0}$ satisfy \emph{log-midpoint-convexity inequalities}:
\begin{equation} \label{eqn:midptlog}m_\alpha m_\beta \geq m_{\left(\frac{\alpha+\beta}{2}\right)}^2, \end{equation}
with $\alpha,\beta,\frac{\alpha+\beta}{2} \in A$. Moreover these inequalities generate all possible binomial inequalities valid on $A$-pseudo-moments. Since the Motzkin configuration contains no midpoints, we see that there are no binomial inequalities valid on $A$-pseudo-moments. 
\end{example}

\begin{remark} The combinatorial notions of convex and midpoint-convex functions on $A$ are quite similar to what has been developed for analyzing certain sparse globally nonnegative polynomials and sums of squares arising from the arithmetic mean-geometric mean inequality. Such polynomials were originally called AGI-forms by Reznick in \cite{Reznick89} and were later called Sum of Nonnegative Circuit Polynomials (SONC) in \cite{IlimanDeWolff}. The only difference is that for analyzing global nonnegativity, it makes a difference whether points in $A$ have all even coordinates or not, and for instance midpoints convexity has to hold only between even points in $A$. As we will see in \Cref{thm:whole} and \Cref{tropicalizingpseudomoments}, this is precisely what happens for us as well when analyzing measures supported on all of $\R^n$.
\end{remark}

\begin{example}[Motzkin Configuration on the Square.]\label{ex:motzsq}
Let $S=[0,1]^2\subset \RR^2$ be the unit square given by inequalities $0\leq x\leq 1$, $0\leq y \leq 1$. Let $A \subset \mathbb{N}^2$ again be the Motzkin configuration. % $A=\{(0,0), (1,2), (2,1), (1,1) \}$, 
In addition to the log-convexity inequality \eqref{eq:logconv}, the following binomial moment inequalities are naturally valid on the unit square, since all variables lie between $0$ and $1$:
\[ m_{00} \geq m_{11}, \quad m_{11}\geq m_{12}, \quad m_{11}\geq m_{21}.\]
Any binomial inequality in $A$-moments of measures supported on $S$ can be obtained from the above inequalities and \eqref{eq:logconv} via exponentiation and multiplication (see Example \ref{exm:motzcube}). 

As we increase the degree $d$, sums of squares provide increasingly better approximations to polynomials supported on $A$ that are nonnegative on $S$, and thus can, in principle, be used to provide increasingly sharper binomial inequalities for pseudo-moments (see Section \ref{sec:tsos} for more details). If we regard pseudo-moments as functions on $A$ then increasing the degree allows us to use moments that lie outside of $A$. For instance we can show that $m_{00}m_{12}\geq 2m_{11}$ by combining the inequality $m_{12} \geq m_{22}$ with the log-midpoint-convexity inequality \eqref{eqn:midptlog}: $m_{00}m_{22} \geq m_{11}^2$.

We show that, in this case, the binomial $A$-pseudo-moment inequalities stabilize, and only the following binomial inequalities can be learned via sums of squares (regardless of the degree $d$):
\[ m_{11}\geq m_{12}, \quad m_{11}\geq m_{21}, \quad m_{00}m_{12}\geq m_{11}^2, \quad m_{00}m_{21}\geq m_{11}^2.\]
Therefore, for any degree $d$, sums of squares cannot prove the moment inequality $m_{00}m_{12}m_{21}\geq m_{11}^3$, and moreover, sums of squares remain quantifiably far away from certifying this inequality. 
\end{example}

\begin{remark}
Since the unit square is compact, it follows from Schm\"{u}dgen's Positivstellensatz \cite{MR1092173} that any polynomial $f$ strictly positive on the unit square has a sum of squares certificate. Therefore, as the degree increases, sums of squares provide an increasingly better approximation to all nonnegative polynomials supported on $A$. However, as we have seen, tropicalizations stabilize, and higher degree sums of squares do not have larger tropicalizations. This is due to the fact that $\trop (S)$ only depends on the neighborhood of zero and the ``neighborhood of infinity'' contained in $S$. We give a simple example of this phenomenon below: Let $S$ be the planar triangle with vertices $(0,0)$, $(1,0)$ and $(1,1)$ and let $S_\varepsilon$ be the quadrilateral with vertices $(0,0)$, $(1,0)$, $(1,1)$ and $(0,\varepsilon)$. Then we have $S_\varepsilon \rightarrow S$ as $\varepsilon \rightarrow 0$, however $\trop (S_\varepsilon)$ is the nonpositive orthant for all $\varepsilon >0$, and $\trop(S)$ is the subset of the nonpositive orthant given by the inequality $x\geq y$. 
\end{remark}

\begin{remark}
The unit square is special in that all nonnegative polynomials have a sum of squares certificate. Example \ref{ex:motzsq} also shows that even though every nonnegative polynomial is a sum of squares, there does not exist a degree bound for the certificate even for just the $A$-supported polynomials \cite[9.4.6 Example (1)]{MR2383959}. 
\end{remark}

\subsection{Main Results in Detail:} 
Our main results are about the tropicalizations of moment cones and pseudo-moment cones for semi-algebraic sets with the Hadamard property. 
We say that a subset $S$ of $\RR^n_{\geq 0}$ has the Hadamard property if $S$ is closed under coordinatewise (Hadamard) multiplication. Concretely, we focus on nonnegative orthants, hypercubes, and toric cubes to discuss our general results. Throughout, we fix a finite set $A\subset \NN^n$ of exponents and consider the $A$-moments: $m_a = \int_S x^a$ for $a\in A$ (also known as truncated moment sequences). 

We think of elements of the tropicalization of the moment cone (resp.~pseudo-moment cone) as functions $h\colon A\to \R$ and describe the tropicalization mainly in terms of discrete convexity properties of these functions. For the moment cone, we have a general description of the tropicalization of $M_A(S)$ for any subset of the nonnegative orthant with the Hadamard property:
\begin{theorem*}[{\Cref{thm:genmom}}]
     Let $S\subset \R_{\geq 0}^n$ be a semi-algebraic subset with the Hadamard property such that $S\subset \overline{S\cap \R^n_{>0}}$. 
    The tropicalization of the $A$-moment cone $M_A(S)$ is the rational polyhedral cone of functions $h \colon A\to \R$ satisfying 
\[
\sum_{i=1}^r \lambda_i h(a_i) \geq h(b)\text{ for all }a_1,\ldots,a_r,b\in A, \lambda_i \geq 0, \sum_{i=1}^r \lambda_i = 1, \text{ with } \sum_{i=1}^r \lambda_ia_i - b \in \trop(S)^\vee.
\]	
\end{theorem*}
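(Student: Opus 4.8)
The plan is to realize the moment cone as a conic hull of point evaluations, pass to a valued field to compute its tropicalization as a set of piecewise‑linear ``max of affine pieces'' functions, and then identify this with the polyhedral description by linear‑programming duality. For the setup, write $v(s):=(s^a)_{a\in A}$ for $s\in\R^n$ and $S^\circ:=S\cap\R^n_{>0}$. By a Tchakaloff/Richter‑type theorem the extreme rays of $M_A(S)$ are single point masses, and Carath\'eodory bounds the number of atoms by $|A|$, so $M_A(S)=\cone\{v(s):s\in S\}$. Since $s\mapsto v(s)$ is continuous and $S^\circ$ is dense in $S$, the cones $M_A(S)$ and $M_A(S^\circ)$ have the same closure; as tropicalization is insensitive to closures and $\trop(S^\circ)=\trop(S)$, we may work with $S^\circ$ throughout. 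This is the one place the density hypothesis enters; the Hadamard hypothesis enters through the fact that $\trop(S)$ is then a rational polyhedral \emph{cone}, so that $\trop(S)^\vee$ is meaningful.

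Next I would compute $\trop M_A(S)$ over $\KK=\R\ps$: a point of $M_A(S^\circ)_\KK$ has the form $m_a=\sum_{j=1}^{N}w_j\,s_j^{\,a}$ with $w_j\in\KK_{\ge 0}$, $s_j\in S^\circ_\KK\subset\KK_{>0}^n$ and $N\le |A|$. All summands are nonnegative, so there is no cancellation: $\val(m_a)=\min_j(\val(w_j)+\langle a,\val(s_j)\rangle)$, and writing $h(a)=-\val(m_a)$, $\mu_j=-\val(w_j)\in\R$, $\sigma_j=-\val(s_j)$ gives $h(a)=\max_j(\mu_j+\langle a,\sigma_j\rangle)$ with $\sigma_j\in\trop(S)$ by the fundamental theorem of tropicalization. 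Conversely every such datum is realized, choosing $s_j\in S^\circ_\KK$ with $-\val(s_j)=\sigma_j$ and $w_j=t^{-\mu_j}$. This yields the intermediate description
\[
\trop M_A(S)=\Big\{h\colon A\to\R \;:\; h(a)=\max_{1\le j\le |A|}(\mu_j+\langle a,\sigma_j\rangle)\ \forall a\in A,\ \text{some }\mu_j\in\R,\ \sigma_j\in\trop(S)\Big\},
\]
the bound $|A|$ being free because for each $a\in A$ one may retain a single affine piece attaining the maximum at $a$.

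It remains to match this with the polyhedron $P$ in the statement. The inclusion ``$\subseteq$'' is the easy direction: if $h=\max_j(\mu_j+\langle\cdot,\sigma_j\rangle)$ with $\sigma_j\in\trop(S)$, and $\lambda_i\ge 0$, $\sum_i\lambda_i=1$, $\sum_i\lambda_ia_i-b\in\trop(S)^\vee$, then $\sum_i\lambda_ih(a_i)\ge\max_j(\mu_j+\langle\sum_i\lambda_ia_i,\sigma_j\rangle)=\max_j(\mu_j+\langle b,\sigma_j\rangle+\langle\sum_i\lambda_ia_i-b,\sigma_j\rangle)\ge h(b)$, using $\langle\sum_i\lambda_ia_i-b,\sigma_j\rangle\ge 0$. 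The reverse inclusion is the crux: given $h\in P$ and $b\in A$, I would produce $\sigma_b\in\trop(S)$ with $\langle a-b,\sigma_b\rangle\le h(a)-h(b)$ for all $a\in A$ — a ``discrete subgradient of $h$ at $b$ lying in $\trop(S)$'' — whence $h=\max_{b\in A}(h(b)-\langle b,\sigma_b\rangle+\langle\cdot,\sigma_b\rangle)$ is of the required form. The existence of $\sigma_b$ is equivalent to the polyhedron $Q_b=\{y:\langle a-b,y\rangle\le h(a)-h(b)\ \forall a\in A\}$ meeting the polyhedral cone $\trop(S)$; were they disjoint, Farkas' lemma would supply $\lambda_a\ge 0$, not all zero, with $\sum_a\lambda_a(a-b)\in\trop(S)^\vee$ and $\sum_a\lambda_a(h(a)-h(b))<0$, and rescaling to $\sum_a\lambda_a=1$ contradicts $h\in P$. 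That $P$ is a \emph{rational} polyhedral cone follows since, for each fixed support in $A$, the admissible tuples $(\lambda_i)$ form a polyhedron whose finitely many vertices and extreme rays give all the essential inequalities, and $\trop(S)^\vee$ is rational.

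The main obstacle I anticipate is not the Farkas step but the bookkeeping around it: pinning down the sign convention so that $-\val$ of a $\KK$‑point becomes a \emph{max} (not min) of affine functions whose slopes lie in $\trop(S)$ (not $-\trop(S)$), and rigorously justifying the passage from $S$ to $S^\circ$ together with the compatibility of $\trop$ with the relevant closures. Once those are settled, both the computation of $\trop M_A(S)$ and the LP‑duality identification with $P$ are routine.
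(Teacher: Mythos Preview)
Your argument is correct and follows the same overall route as the paper: first identify $\trop M_A(S)$ with the set of functions $h(a)=\max_j(\mu_j+\langle a,\sigma_j\rangle)$ with $\sigma_j\in\trop(S)$ (this is exactly $\tcone A(\trop(S))$, which the paper obtains via its Lemma on $\trop(\cone(\cdot))=\tcone(\trop(\cdot))$), then match this with the polyhedron $P$ of the statement. The easy inclusion is the same in both.

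The one genuine difference is the reverse inclusion. The paper packages this as the identity $\mathcal K_{A,C}=\tcone A(C^\vee)$ and proves it by analyzing the extreme rays of $\tcone(Y)^\vee$ via the Develin--Sturmfels description of tropical convex hulls (their Proposition on $\tcone(Y)^\vee=\sum_i(Y^\vee\cap U_i\cap H)$): every extreme inequality has exactly one negative coefficient, which after normalization is precisely one of the defining inequalities of $P$. You instead fix $b\in A$ and produce a discrete subgradient $\sigma_b\in\trop(S)$ by a direct Farkas argument. Your approach is more elementary and self-contained, avoiding the tropical-convexity machinery; the paper's approach has the advantage of yielding the structural fact that the facet inequalities of $P$ are exactly those with a single negative coefficient, which they reuse elsewhere. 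Your Farkas step is fine as written: infeasibility of $Q_b\cap\trop(S)$ gives $\lambda_a\ge 0$ with $\sum_a\lambda_a(a-b)\in\trop(S)^\vee$ and $\sum_a\lambda_a(h(a)-h(b))<0$, and $\sum_a\lambda_a=0$ is impossible since it would force $0<0$.
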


In particular, all of the functions $h:A\to \R$ in $M_A(S)$ satisfy the following: 
\begin{enumerate}
		\item (Convexity) $\sum_{i=1}^r \lambda_i h(a_i) \geq h(b)$ for all $a_i,b\in A$, $\lambda_i \geq 0$, $\sum_{i=1}^r \lambda_i = 1$, $\sum_{i=1}^r\lambda_i a_i =b$;
		\item  (Nonincreasing) $h(a)\geq h(b)$ whenever $a-b \in \trop(S)^\vee$.
	\end{enumerate}
The first type of inequality is the naive form of discrete convexity that arises in this context. The second type of inequality is where the set $S$ enters: 
The tropicalization of $S$ is a rational polyhedral cone and $\trop(S)^\vee$ is its dual cone, which defines a partial order of $\R^A$  -- and the second inequality says that the functions in the tropicalization are order preserving in this sense. 
In the case $S = \R_{\geq 0}^n$, we have $\trop(S)^\vee = \{0\}$, so the tropicalizations of the $A$-moment cones do not include inequalities of type (2). For $S = [0,1]^n$, we get $\trop(S)^\vee = \R_{\leq 0}^n$ and the inequalities of type (2) say that the functions $h\in \trop(M_A(S))$ are non-increasing in the coordinate directions.
However, the condition in \Cref{thm:genmom} is stronger than the combination of conditions (1) and (2), see \Cref{exm:nottwo}.

We can also think of this result as a general description of all binomial inequalities valid on the moment (by exponentiation). With the analogous result for pseudo-moment cones, we will see that these inequalities suffice in distinguishing moments from pseudo-moments in many important cases. Moreover, there is a rich combinatorial interplay between the geometry of the moment configuration $A$ and the geometric and algebraic description of $S$.

We study the convex cone of convex functions in the sense of the above theorem in \Cref{sec:convexity} from the point of view of discrete and tropical geometry. In particular, we show in \Cref{thm:K_{A,C}AsMax} that the convex cone of functions $h\colon A \to \R$ with $\sum \lambda_i h(a_i) \geq h(b)$ for all $a_i,b\in A$ satisfying $\sum \lambda_i a_i - b \in \trop(S)^\vee$ is the tropical conical hull of $A(\trop(S)^\vee) = \{Au\colon u\in \trop(S)^\vee\}$, where we think of $A$ as a matrix.

We now move on to pseudo-moment cones, which are the dual cones to truncated preorderings or quadratic modules. We describe in detail how we truncate (in a total degree version) at the beginning of \Cref{sec:tsos}. For pseudo-moment cones, we focus on the case that the semialgebraic set $S$ has an inequality description in terms of pure binomial inequalities.

\begin{theorem*}[{\Cref{thm:genpseudomom}}]
  Let $g_1, \hdots, g_r$ be pure binomials and consider the semi-algebraic set $S\subset\R^n$ defined by $x_i \geq 0$ and $g_j\geq 0$. 
  Assume $S$ is full dimensional, $S \subset \overline{S\cap \R_{>0}^n}$, and that the vectors $w_i = a_i - b_i$, where $g_i = x^{a_i}-x^{b_i}$, generate the semigroup $N = \trop(S)^\vee \cap \Z^n$. 
  Then, for any integer $d\geq 0$ the tropicalization of $\qm_d(g_1,\dots,g_r)^\vee$ is the rational polyhedral cone given by the following inequalities: 
  \begin{enumerate}
	  \item (Midpoint convexity:) $h(u_1) + h(u_2) \geq 2 h(v)$ for all $u_1,u_2,v$ such that $|u_i| \leq d$, $|v|\leq d$ and $u_1 + u_2 = 2v$;
	  \item (Nonincreasing:) $h(u) \geq h(v)$ whenever $|u|\leq d$, $|v|\leq d$, and $u-v\in \trop(S)^\vee$. 
  \end{enumerate}
  The inequalities in $A$-pseudo-moments provable by sums of squares of degree at most $d$ are dual to the coordinate projection of $F(S)_d$ onto the coordinates indexed by $A$.
\end{theorem*}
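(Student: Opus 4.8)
The plan is to prove the displayed equality $\trop(\qm_d(g_i)^\vee)=F(S)_d$ for all $d\gg 0$ by establishing the two inclusions separately, and then to read off the statement about $A$-pseudo-moments. Recall that $\qm_d(g_i)^\vee$ is a spectrahedral cone: a linear functional $\ell$ lies in it precisely when its moment matrix $M(\ell)$ and each localizing matrix $M(g_i\ell)$ are positive semidefinite, this being the dual description of the truncation set up in \Cref{sec:tsos}. Following the strategy of \cite{BRST}, I would first record that the Hadamard property passes to the dual side: the dual of the truncated preordering (equivalently, of the quadratic module after adjoining the coordinate functions $x_j$, which is forced in the cases of interest since e.g.\ $x_j=x_j^2+(x_j-x_j^2)$) is closed under coordinatewise multiplication, because the moment matrix of $\ell\ast\ell'$ is the Schur product $M(\ell)\circ M(\ell')$ and the localizing matrices of $\ell\ast\ell'$ are sums of Schur products of positive semidefinite matrices. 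Consequently $\trop(\qm_d(g_i)^\vee)$ is a rational polyhedral convex cone and tropicalization commutes with the coordinate projection $\pi_A$; both facts are needed at the end.

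\emph{The inclusion $\trop(\qm_d(g_i)^\vee)\subseteq F(S)_d$.} Fix $m\in\qm_d(g_i)^\vee$ over the Puiseux series field $\R\ps$ and set $h=\trop(m)$. The \emph{even} midpoint inequalities $h(2\alpha)+h(2\beta)\ge 2h(\alpha+\beta)$ follow at once from the $2\times 2$ principal minors $m_{2\alpha}m_{2\beta}\ge m_{\alpha+\beta}^2$ of $M(\ell)$, and the inequalities $h(2\gamma+a_i)\ge h(2\gamma+b_i)$ from the diagonal entries of $M(g_i\ell)$. The work is to upgrade these to the full lists defining $F(S)_d$. For monotonicity one uses that the $a_i-b_i$ generate $N=\trop(S)^\vee\cap\Z^n$ together with $S\subseteq\overline{S\cap\R_{>0}^n}$: these hypotheses let one factor the order relation $a-b\in\trop(S)^\vee$ into single binomial steps and, for $d\gg 0$, produce a (binomial) Positivstellensatz certificate exhibiting the needed witnesses $x^c-x^{c+(b_i-a_i)}$ in $\qm_d(g_i)$ in every parity -- for instance $x_j^{2k+1}(1-x_j)=x_j^{2k}(x_j-x_j^2)$ and $1-x_j=(1-x_j)^2+(x_j-x_j^2)$ on a hypercube -- whence $h(a)\ge h(b)$ whenever $a-b\in\trop(S)^\vee$. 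For the general same-parity midpoint inequalities $h(a_1)+h(a_2)\ge 2h(b)$ with $a_1+a_2=2b$, one combines the even midpoint inequalities, the monotonicity just obtained, and the higher principal minors of $M(\ell)$ and of the localizing matrices, chaining them through a case analysis on which of the $m_c$ has smallest valuation; $d\gg 0$ guarantees that the auxiliary exponents appearing in this chaining stay within the degree range recorded by $F(S)_d$. This upgrade -- from the parity-restricted inequalities that fall directly out of semidefiniteness to the unrestricted ones -- is the step I expect to be the main obstacle, and it is exactly where the binomial structure of $S$ and the generation hypothesis on $N$ are used.

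\emph{The inclusion $F(S)_d\subseteq\trop(\qm_d(g_i)^\vee)$.} Since $\trop(\qm_d(g_i)^\vee)$ is closed and $F(S)_d$ is a polyhedral cone, hence the closure of its relative interior, it suffices to lift each $h$ in the relative interior of $F(S)_d$, for which all the defining midpoint and monotonicity inequalities hold strictly. For such $h$ I would take the $\R\ps$-pseudo-moment $m_c=t^{-h(c)}$ (after rescaling $h$ by a large positive scalar, and if necessary adjusting by controlled lower-order terms) and check positive semidefiniteness: strict midpoint convexity writes $M(\ell)=D\,\widetilde M\,D$ with $D$ diagonal and $\widetilde M=I+E$, $E$ infinitesimal, so $M(\ell)\succ 0$; and strict monotonicity makes the leading behaviour of $M(g_i\ell)$ equal to the tropical Hankel matrix $\bigl(t^{-h(\alpha+\beta+a_i)}\bigr)$, which is positive definite by the strict midpoint inequalities for the triples $(2\alpha+a_i,2\beta+a_i,\alpha+\beta+a_i)$ -- and these lie among the defining inequalities of $F(S)_d$ precisely because $d$ was taken large. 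Then $\trop(m)=h$, and passing to the closure gives the inclusion. With both inclusions in hand $\trop(\qm_d(g_i)^\vee)=F(S)_d$ for all large $d$, which in particular shows the tropicalization stabilizes.

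Finally, the statement about $A$-pseudo-moments is formal. The cone of $A$-pseudo-moments reachable by sums of squares of degree $\le d$ is the coordinate projection $\pi_A(\qm_d(g_i)^\vee)$; since tropicalization commutes with $\pi_A$ for Hadamard-closed cones, its tropicalization is $\pi_A(\trop(\qm_d(g_i)^\vee))=\pi_A(F(S)_d)$. For a Hadamard-closed cone the valid binomial inequalities are exactly (up to scaling) the elements of the dual of its tropicalization; hence the binomial inequalities in $A$-pseudo-moments provable by sums of squares of degree $\le d$ are precisely those dual to $\pi_A(F(S)_d)$, as claimed.
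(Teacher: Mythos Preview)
Your overall structure—Hadamard property, two inclusions, lifting interior points for the reverse direction—matches the paper's, but you are making the forward inclusion much harder than it needs to be, and the reason is that you have overlooked a feature of the paper's definition of the truncated quadratic module.

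In the paper's setup (see the display just before \Cref{subsec:pseudoPureBinomial}), $\qm_d(g_1,\dots,g_r)$ is built from terms $\sigma_{J,i}\,x^J g_i$ with $J$ ranging over \emph{all} subsets of $[n]$, not just $J=\emptyset$. Dually, the spectrahedral description of $\qm_d^\vee$ involves the localizing matrices $M_{d(J,i),\ell}(x^J g_i)$ for every $J$. The $2\times 2$ principal minors of $M_{d(J,0),\ell}(x^J)$ tropicalize to $h(J+2\alpha)+h(J+2\beta)\ge 2h(J+\alpha+\beta)$; and since any midpoint triple $a_1+a_2=2b$ forces $a_1$ and $a_2$ to share a common parity vector $J\in\{0,1\}^n$ (write $a_i=J+2\alpha_i$), \emph{every} midpoint inequality in $F(S)_d$ already arises this way—no upgrade from ``even'' to general midpoints is needed. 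Likewise the diagonals of $M_{d(J,i),\ell}(x^Jg_i)$ give the single-step monotonicity inequalities $h(c+a_i)\ge h(c+b_i)$ for every nonnegative integer vector $c=J+2\alpha$, and chaining these via the semigroup-generation hypothesis yields the full list (2). So the step you flag as ``the main obstacle'' evaporates once you use the correct family of localizing matrices; your proposed chaining-through-parities argument is both unnecessary and, as sketched, not obviously sound (general midpoint inequalities do not follow from even ones plus monotonicity alone).

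The paper then does not argue the two inclusions separately at all: it invokes \Cref{prop:binom}(2) (proved via \Cref{lem:manyBinom}), which says that for toric spectrahedra of the form $A\succeq B\succeq 0$ with monomial $A,B$, the tropicalization is cut out exactly by the tropicalized $2\times2$ minors of $A$ and $B$ together with the diagonals of $A-B$, provided the resulting cone is full-dimensional. Full-dimensionality and the Hadamard property are supplied by \Cref{lem:QM_Hadamard}. Your interior-lifting argument for the reverse inclusion is essentially the content of \Cref{lem:manyBinom}, so that part is correct but redundant once you cite the proposition. The final paragraph on $A$-pseudo-moments is fine.
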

In the case of pseudo-moments, we need the additional assumption on the inequality description of $S$ that the exponent vectors of the inequalities generate the semigroup of lattice points in the convex cone $\trop(S)^\vee$ to give the same inequalities of type (2) as in the case of moment cones. This is an assumption that, from a purely theoretical point of view, can be made without loss of generality by adding valid and redundant inequalities, if necessary. Without this assumption, we only get some inequalities of type (2), namely those corresponding to the lattice points in $\trop(S)^\vee$ that also lie in the semigroup generated by the exponent vectors.

\Cref{sec:convexity} contains a comparison of the polyhedral convex cones given by convexity (the tropicalization of the moment cone) and given by mid-point convexity (the tropicalization of the pseudo-moment cone).

Our most intriguing observation is that tropicalizations of pseudomoment cones stabilize as the degree bound $d$ grows. This means that for sufficiently large $d$ the tropicalizations of pseudomoment cones remain the same, even though pseudomoments themselves provide a convergent approximation to the moment cone. This phenomenon was already observed in \Cref{ex:motzsq}.
We provide an explicit description of when stabilization occurs for the hypercube $[0,1]^n$ in \Cref{cubeandcubicalhull}. More examples of stabilization and a general theorem 
(in particular for semi-algebraic sets defined by pure binomial inequalities)
are given in \Cref{sec:stab}.

The rest of the paper is organized as follows. In \Cref{sec:background}, we introduce the necessary background in tropical geometry and 
build up the theory of convex sets with the Hadamard property, including moment cones and toric spectrahedra. In \Cref{sec:convexity} we introduce cones of convex and mid-point convex functions on a lattice set $A\subset \Z^n$ and discuss their facet-defining inequalities. 
\Cref{sec:moment} is dedicated to understanding the tropicalization of the moment cones $M_A(S)$. The tropicalizations of the corresponding pseudo-moment cones are discussed in \Cref{sec:SOS}. 
In \Cref{sec:fd} we discuss open question and further research directions.

{\bf Acknowledgements}. 
We would like to thank the anonymous referees for their careful reading and useful suggestions. 
GB is partially supported by US National Science Foundation grant  DMS-1901950. 
CV is partially supported by the US NSF-DMS grant \#1943363. 
 JY is partially supported by US National Science Foundation grant \#1855726. This material is based upon work directly supported by the National Science Foundation Grant No. DMS-1926686, and indirectly supported by the National Science Foundation Grant No. CCF-1900460.

% !TEX root =  MAIN.tex

\section{ Tropicalization, Hadamard property, and toric spectrahedra}
\label{sec:background}

\subsection{Tropicalization}\label{subsec:TropDefs}

The tropicalization of a subset $X \subset (\RR^*)^n$ is defined as
the \emph{logarithmic limit set} as in \cite{Alessandrini}:
\[
\trop(X) = \lim_{t \rightarrow \infty} \{(\log_t |x_1|, \dots, \log_t|x_n|) : x \in X\}.
\]  
It is often convenient to characterize it as an image under valuation as we will now explain.
Let $\cR$ be a real closed field with a compatible convex non-trivial non-archimedean valuation
\mbox{$\val : \cR^* \rightarrow \RR$}, such as the field of real
Puiseux series with the order map, as in~\cite{JSY}.  The value group $\Gamma$ (the image of $\val$) is
 dense in $\RR$ since the valuation is nontrivial and the field is real closed.
For a point $x \in \cR^n$,  the {\em tropicalization map} is the 
negation of coordinate-wise valuation
\[
\trop(x) = (-\val(x_1),\dots,-\val(x_n)).
  \]
For any subset $X \subset \cR^n$, we define its {\em tropicalization} to be
\[
\trop(X) := \overline{\{(-\val(x_1),\dots,-\val(x_n))  : x \in X \cap (\cR^*)^n\}} \subset \R^n
\]
where the closure is taken in Euclidean topology of $\R^n$.
By the proof of Lemma~6.4 in~\cite{JSY}, we have
\begin{equation}
\label{eqn:closure}
\trop(X) = \trop(\overline{X})
\end{equation}

for any subset $X \subset \cR^n$ where the closure is taken under the non-archimedean norm.
If $X$ is semialgebraic, $\trop(X)$ is a closed polyhedral subset of $\RR^n$~\cite{tropSpec}. 

Let us now return to a semialgebraic subset $X \subset \RR^n$.  Let
$X_\cR$ be the semialgebraic subset of $\cR^n$ defined by the same
semialgebraic expression
(which, to be precise, means a first-order formula in the language of ordered rings)
defining $X$, where $\cR$ is a real closed
field extension of $\RR$ with a non-trivial valuation.  We then define
 tropicalization to be
\[\trop(X) := \trop(X_\cR).\]
The
tropicalization does not depend on the choice of the extension $\cR$
or the choice of semialgebraic expression defining $X$, and it
coincides with the logarithmic limit set~\cite{JSY, Alessandrini}.
  
We now discuss tropicalizing polynomial inequalities. 
Consider the {\em tropical semiring} $(\RR, \oplus, \odot)$  where the tropical addition $\oplus$ is taking maximum and the tropical multiplication $\odot$ is usual addition.
For a polynomial $f \in \cR[x_1,\dots,x_n]$, let $\trop(f)$ be the tropical polynomial obtained by replacing addition and multiplication with $\oplus$ and $\odot$ respectively and replacing the coefficients with the negative of their valuations.
For instance, for $f = (7\varepsilon+2\varepsilon^3) x_1^2 - (\pi \varepsilon^{-1} + 5) x_1x_2 + 3 \in  \R\{\!\{\varepsilon\}\!\}[x_1,x_2]$, we get $\trop(f) = ((-1)\odot x_1\odot x_1 ) \oplus (1\odot x_1\odot x_2) \oplus 0$, which is the same as $\max\{ -1+2x_1, 1 + x_1 + x_2,0\}$.

Let us define the following notations:
\begin{align*}
  \{\trop(f) \geq 0\} & = \{x \in \RR^n:  \text{ maximum in } \trop(f)
                        \text{ is attained at a positive term} \}\\
    \{\trop(f) > 0\} & = \{x \in \RR^n:  \text{ maximum in } \trop(f) \text{ is attained only at positive term(s)} \}.
  \end{align*}
In the above example, the terms $-1+2x_1$ and $0$ are positive (as the tropicalizations of terms with positive coefficients) and $1 + x_1 + x_2$ is negative so that $\trop(f)\geq 0$ is described by the condition that $-1+2x_1 \geq 1+x_1+x_2$ or $0\geq 1+x_1+x_2$. 
By the definition of tropicalization, for $x \in \cR_{>0}^n$
  \begin{equation}
\label{eqn:weakineq}
    f(x) \geq 0 \implies \trop(x) \in \{\trop(f) \geq 0\}.
    \end{equation}
  Taking contrapositive and changing signs of $f$, we get
  \begin{equation}
\label{eqn:strictineq}
    \trop(x) \in \{\trop(f) > 0\}  \implies f(x) > 0.
    \end{equation}

    For any semialgebraic set $X \subset \RR_{>0}^n$, the {\em
      Fundamental Theorem}~\cite[Theorem 6.9]{JSY} implies that
    \[
\trop(X) = \bigcap_{\stackrel{f \geq 0}{\text{on } X}} \{\trop(f) \geq
0\}
\]
where the intersection can be taken to be finite.  Conversely, any
tropical polynomial inequality valid on $\trop(X)$ arises as the tropicalization
of a polynomial inequality over $\cR$ valid on $X$ \cite[Lemma 6.8]{JSY}.

For semialgebraic sets $X_1$ and $X_2$ in $(\RR^*)^n$ or $(\cR^*)^n$,
we have $\trop(X_1 \cup X_2) = \trop(X_1) \cup \trop(X_2)$ by
definition.  For the intersections we have from \cite[Proposition 6.12]{JSY}, \cite[Lemma 2.3]{tropSpec}:
  \[
    \topint \left(\trop(X_1) \cap \trop(X_2)\right) \subset \trop(X_1 \cap X_2) \subset \trop(X_1) \cap \trop(X_2). 
  \]

It follows that, for a semialgebraic set $X\subset (\R^*)^n$ defined by inequalities $f_1 \geq 0, \dots, f_r \geq 0$, if the set $T = \{x\in \R^n\colon \trop(f_1)(x)\geq 0, \ldots, \trop(f_r)(x)\geq 0\}$ has regular support (meaning that it is equal to the closure of its interior), then $\trop(X) = T$ \cite[Corollary 4.8]{tropSpec}.

If, in particular, the semialgebraic set $X$ is defined by binomial inequalities and their tropicalization, which are usual linear inequalities, cut out a full-dimensional set, then that polyhedral cone coincides with $\trop(X)$. 

\subsection{Tropical Convexity}
\label{sec:tropConv}
  
Now we recall some basics of tropical convexity.  
The \emph{tropical conical hull} of a set $S\subset \R^n$ is the set of
all tropical linear combinations of points in $S$~\cite{DevelinSturmfels},
\begin{equation}
\tcone(S) = \left\{(c_1 \odot \mathbf{s}_1) \oplus ( c_2 \odot \mathbf{s}_2) \oplus \cdots \oplus   (c_r \odot \mathbf{s}_r)  :  r\in \N, \mathbf{s}_1, \dots, \mathbf{s}_r \in S, c_1,\dots,c_r \in \R \right\}. 
\end{equation}
Here $c\odot \mathbf{s} = c \odot (s_1,\ldots,s_n)$ is the vector $(s_1 + c,s_2 + c,\ldots,s_n+c)$.
A subset $S \subset \R^n$ is called \emph{tropically convex} if it equals its tropical conical hull. 
For semialgebraic subsets in the positive orthant, the operations of tropicalization and conical hull commute~\cite[Lemma 8]{tropSA}.  That is,
%\begin{proposition}[] \label{prop:TropConvCommuting}
for any semialgebraic subset $S\subset \R_{> 0}^n$, the tropicalization of the conical hull of $S$ equals the tropical conical hull of $\trop(S)$. That is,  
\begin{equation}
\label{eqn:TropConvCommuting}
\trop(\cone(S)) = \tcone(\trop(S)).
\end{equation}
%\end{proposition}

This follows from the definition of $\trop(S)$ as the image of the set $ S_{\cR}$ 
under coordinate-wise valuation, as described in \Cref{subsec:TropDefs}, and 
the fact that addition is compatible with tropicalization for elements in $\cR_{>0}$ as no cancellation can occur. 
That is, for any $s, t\in \cR_{>0}$, $-\val(s+t) = -\val(s)\oplus -\val(t)$. 
For any points $x, y\in \cR_{>0}^n$ and scalars 
$\lambda, \mu \in \cR_{>0}$
we have  $-\val(\lambda x + \mu y) = (-\val(\lambda) \odot -\val(x))\oplus  (-\val(\mu) \odot -\val(y))$. Therefore
for any set $S_{\cR}\subset \cR_{>0}^n$, the tropical conical hull of $\trop(S_{\cR})$ 
coincides with the tropicalization of its conical hull. 
For any semialgebraic convex cone $S \subset \cR_{>0}^n$, $\trop(S_{\cR})$ is thus 
already tropically convex.

Moreover, the tropical conical hull of a convex polyhedron is again a convex polyhedron. This follows from the following description \cite{HLS, LohoSmith}: Let $\mathbbm{1} = (1,1,\dots,1)$ and 
\[V_i = \{-x \in \R^n : x_i = x_1\oplus \cdots \oplus x_n \} = \cone\{e_j : j \neq i\}+\mathop{span} \mathbbm{1}.\]
For any $Y \subset \R^n$, its tropical conical hull is
\begin{equation}\label{eqn:tcone}
\tcone(Y) = \bigcap_{i=1}^n (Y + V_i).
\end{equation}

We apply this formula to the case where $Y$ is a convex cone. 

\begin{proposition}\label{prop:tconvdual}
Let $Y\subset \mathbb{R}^n$ be a convex cone. Then $\tcone(Y)$ is a convex cone and its dual cone has the form
\[\tcone(Y)^\vee= \sum_{i=1}^n (Y^\vee \cap U_i\cap H),
\]
where $\sum$ is Minkowski addition, $U_i$ is the orthant of $\RR^n$
where the $i$-th coordinate is nonpositive and the rest are
nonnegative, and $H$ is the hyperplane in $\R^n$ perpendicular to $\mathbbm{1}$. In particular, any extreme ray of the dual cone $\tcone(Y)^\vee$ has the form $\sum \alpha_ie_i$, where $e_i$ are the standard basis vectors, $\sum_{i=1}^n \alpha_i=0$, and exactly one of the coefficients $\alpha_i$ is negative.
\end{proposition}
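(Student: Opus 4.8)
The plan is to derive the formula for $\tcone(Y)^\vee$ directly from \Cref{lem:tropconv} by dualizing, and then to read off the structure of the extreme rays of $Y^\vee$ as the degenerate case $Y = \tcone(Y)$ (valid since $\tcone(Y) \subseteq Y^{\vee\vee}$ is forced once $Y$ is already tropically convex, but more simply: every convex cone of the stated dual form has the extreme-ray structure claimed, and conversely). First I would recall that for convex cones $C_1,\dots,C_n$ one has $(\bigcap_i C_i)^\vee = \overline{\sum_i C_i^\vee}$, with the closure unnecessary when the cones are polyhedral (which they are here, since $Y$ is a convex cone — I should assume $Y$ is polyhedral, or take closures throughout). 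Applying this with $C_i = Y + V_i$ from \Cref{lem:tropconv} gives $\tcone(Y)^\vee = \sum_{i=1}^n (Y + V_i)^\vee = \sum_{i=1}^n (Y^\vee \cap V_i^\vee)$.

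The second step is to compute $V_i^\vee$. By definition $V_i = \{-x \in \R^n : \max_j x_j = x_i\} = \{y \in \R^n : \min_j(-y_j) = -y_i\} = \{y : y_i \le y_j \text{ for all } j\}$. This is the cone generated by $-e_i + e_j$ for $j\neq i$ together with the lineality space $\R\mathbbm{1}$. Dualizing: $w \in V_i^\vee$ iff $\langle w, y\rangle \ge 0$ for all $y\in V_i$; testing against the generators $-e_i+e_j$ gives $w_j \ge w_i$... wait, one must be careful with signs of the generators. Writing $V_i = \cone\{\,e_j - e_i : j\neq i\,\} + \R\mathbbm{1}$, we get $w\in V_i^\vee$ iff $w_j - w_i \ge 0$ for all $j\neq i$ and $\langle w,\mathbbm{1}\rangle = 0$; the latter forces $\sum_k w_k = 0$, hence $w_i = -\sum_{j\neq i} w_j \le 0$ and $w_j\ge w_i$ already implies $w_j$ need not be signed — but combined with $\sum w_k = 0$ and $w_i\le 0$ we can only conclude $w_i \le 0$, $\sum w_k=0$. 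Hmm, this is not yet $U_i\cap H$. The fix: $V_i^\vee = -(V_i)^\circ$-type reasoning shows $V_i^\vee = \{w : \sum_k w_k = 0,\ w_i \le 0 \le w_j \ \forall j\neq i\}$? Let me recheck: $y\in V_i$ means $y = t\mathbbm{1} + \sum_{j\neq i}c_j(e_j-e_i)$ with $c_j\ge 0$; then $\langle w,y\rangle = t\langle w,\mathbbm{1}\rangle + \sum_j c_j(w_j - w_i)$, so nonnegativity for all such $y$ forces $\langle w,\mathbbm{1}\rangle = 0$ and $w_j \ge w_i$ for all $j$. Given $\sum w_k = 0$ and $w_j\ge w_i$ for all $j\neq i$: summing, $0 = \sum_k w_k \ge n w_i$, so $w_i\le 0$; but there is no constraint forcing $w_j\ge 0$. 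So in fact $V_i^\vee = \{w : \langle w,\mathbbm{1}\rangle = 0,\ w_i = \min_k w_k\}$, which contains vectors with other negative coordinates. So the paper's claim that $V_i^\vee = U_i\cap H$ needs that extra structure to be absorbed — indeed $V_i^\vee \supsetneq U_i\cap H$ in general, but $Y^\vee\cap V_i^\vee$ versus $Y^\vee\cap U_i\cap H$ could still agree after summing, OR the intended $V_i$ has a different sign convention. I will reconcile this by noting $V_i^\vee = \cone(U_i \cap H)$ is false; the correct identity, which I would verify carefully, is that $\{w : w_i = \min_k w_k,\ \sum w_k = 0\}$ is exactly the cone $U_i\cap H$ only when... actually $e_j - e_i$ for fixed $i$, ranging over $j$, generate precisely $U_i\cap H$: any $w$ with $w_i\le 0$, $w_k\ge 0$ ($k\neq i$), $\sum w_k = 0$ is $\sum_{k\neq i} w_k(e_k - e_i)$ since then the $e_i$-coefficient is $-\sum_{k\neq i}w_k = w_i$. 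Conversely such combinations have $e_i$-coordinate $\le 0$ and others $\ge 0$. So $U_i\cap H = \cone\{e_j - e_i\}_{j\neq i}$, and its dual is $\{w : w_j \ge w_i\ \forall j\}$ — NOT closed under $+\R\mathbbm{1}$ unless we intersect with $H$. The resolution: $V_i = -(U_i\cap H) + \R\mathbbm{1}$ up to the sign bookkeeping, so $V_i^\vee = (U_i\cap H)^\circ \cap H$-style, and the cleanest route is: $(Y+V_i)^\vee = Y^\vee \cap V_i^\vee$ and then show directly $\sum_i (Y^\vee\cap V_i^\vee) = \sum_i(Y^\vee\cap U_i\cap H)$ using that $Y^\vee$ is a cone in $\R^n$ — actually simplest is to just recompute $V_i$ with the paper's own definition and push through.

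The hard part will be exactly this sign/convention bookkeeping between $V_i$, $U_i$, and the hyperplane $H$ — getting $(Y+V_i)^\vee = Y^\vee \cap U_i \cap H$ cleanly. Once the dual formula $\tcone(Y)^\vee = \sum_{i=1}^n(Y^\vee\cap U_i\cap H)$ is established, the final sentence about extreme rays of $Y^\vee$ follows by applying the formula with $Y$ replaced so that $\tcone(Y) = \R^n$-adjacent... more precisely: if $Z$ is any polyhedral cone that is tropically convex, then $Z = \tcone(Z)$, so $Z^\vee = \sum_i(Z^\vee\cap U_i\cap H)$; every extreme ray $\rho$ of $Z^\vee$ must then lie in some summand $Z^\vee\cap U_i\cap H$ (a standard fact: extreme rays of a Minkowski sum of cones lie in one summand), hence $\rho\subset U_i\cap H$, i.e. $\rho = \R_{\ge0}\sum\alpha_k e_k$ with $\sum\alpha_k = 0$ and exactly one negative coordinate (exactly one, not just at most one, since if all $\alpha_k\ge 0$ and $\sum\alpha_k=0$ then $\rho = \{0\}$). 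Since $\tcone(Y)$ is tropically convex for every convex cone $Y$, and the statement is phrased for "any extreme ray of the dual cone $Y^\vee$" — here I read $Y^\vee$ as itself being of the form $\tcone(\cdot)^\vee$, or equivalently one takes $Y$ tropically convex WLOG by the preceding discussion; I would add one sentence clarifying that the extreme-ray claim is for $\tcone(Y)^\vee$, matching how it is used later. I expect no further obstacles beyond the convention-wrangling in Step 2.
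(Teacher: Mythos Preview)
Your overall strategy matches the paper's exactly: dualize the identity $\tcone(Y)=\bigcap_i(Y+V_i)$ from \Cref{lem:tropconv} to get $\tcone(Y)^\vee=\sum_i(Y^\vee\cap V_i^\vee)$, then identify $V_i^\vee$. The extreme-ray argument you sketch (an extreme ray of a Minkowski sum of cones lies in one summand, hence in some $U_i\cap H$) is also precisely what is intended, and your remark that the statement should really speak of extreme rays of $\tcone(Y)^\vee$ is well taken.

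The genuine gap is in your computation of $V_i^\vee$, and it stems from a wrong description of $V_i$. You write $V_i=\cone\{e_j-e_i:j\neq i\}+\R\mathbbm{1}$, but this cone is \emph{strictly smaller} than $V_i$: for instance (with $n\geq 2$, $i=2$) the vector $e_1$ lies in $V_2=\{y:y_2\le y_j\ \forall j\}$ yet cannot be written as $\sum_{j\neq 2}c_j(e_j-e_2)+t\mathbbm{1}$ with $c_j\geq 0$. Because your $V_i$ is too small, your dual $\{w:\sum w_k=0,\ w_i=\min_k w_k\}$ is too large, which is exactly the mismatch you noticed. The clean fix---and this is the one line that makes the paper's proof short---is the identity
\[
V_i \;=\; U_i + \operatorname{span}(\mathbbm{1}),
\]
which you can check directly: given $y$ with $y_i\le y_j$, set $t=y_i$ and $u=y-t\mathbbm{1}\in U_i$. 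Dualizing then gives $V_i^\vee = U_i^\vee\cap(\operatorname{span}\mathbbm{1})^\perp = U_i\cap H$, since $U_i$ is a coordinate orthant and hence self-dual. With this in hand your Step~1 and Step~3 go through unchanged. (Your side worry about closures in $(\bigcap C_i)^\vee=\overline{\sum C_i^\vee}$ is legitimate in full generality; in the paper's applications $Y$ is polyhedral, so the closure is harmless.)
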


\begin{proof}
We have $V_i=U_i+\operatorname{span} \mathbbm{1}$.
By convex duality, it follows that $V_i^\vee=U_i^\vee\cap H$. Moreover, $U_i^\vee = U_i$ because $U_i$ is an orthant.
By~\eqref{eqn:tcone}, we have $\tcone(Y)^\vee= \sum_{i=1}^n (Y^\vee \cap U_i\cap H)$.
In particular, any extreme ray of $\tcone(Y)^\vee$ lies in $Y^\vee \cap V_i^\vee$, and the proposition follows.
\end{proof}

\subsection{Hadamard Property}
We say that a subset of $\mathbb{R}^n$ has the {\em Hadamard property}
if it is closed under Hadamard (coordinate-wise)
multiplication
\[
\RR^n \times \RR^n \rightarrow \RR^n, ~~~~ (x,y) \mapsto x\circ y := (x_1 y_1, \dots, x_n y_n).
  \]
The closure of a set with Hadamard property also has Hadamard property.

\begin{proposition}\label{prop:had}
Let $A\subset \mathbb{Z}^n_{\geq 0}$ be a finite set of nonnegative lattice points and let $\varphi_A: \mathbb{R}^n\rightarrow \mathbb{R}^{A}$, $x \mapsto (x^a \colon a\in A)$, be the corresponding monomial map.
If $S\subseteq \mathbb{R}^n$ has the Hadamard property then so do
\begin{enumerate}
\item the image $\varphi_A(S)$ of $S$ under $\varphi_A$,
\item the convex hull $\conv(S)$ and the conical hull $\cone(S)$, and 
\item the closed moment cone $M_A(S)$.
\end{enumerate}
\end{proposition}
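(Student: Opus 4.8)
The plan is to verify the three closure properties in order, with the key observation being that the Hadamard product interacts nicely with the monomial map $\varphi_A$. First I would establish that $\varphi_A$ is a \emph{Hadamard morphism}: for any $x,y\in\R^n$ and any $a\in\N^n$, we have $(x\circ y)^a = x^a y^a$, so $\varphi_A(x\circ y) = \varphi_A(x)\circ\varphi_A(y)$ where the latter Hadamard product is taken in $\R^{|A|}$. Granting this, part (1) is immediate: if $u,v\in\varphi_A(S)$, write $u=\varphi_A(x)$, $v=\varphi_A(y)$ with $x,y\in S$; then $u\circ v = \varphi_A(x\circ y)$ and $x\circ y\in S$ by the Hadamard property of $S$, so $u\circ v\in\varphi_A(S)$.

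For part (2), the main point is that Hadamard multiplication is bilinear in each argument. Given two convex combinations $\sum_i \lambda_i s_i$ and $\sum_j \mu_j t_j$ of points $s_i,t_j\in S$ (with $\lambda_i,\mu_j\ge 0$ summing to $1$), their Hadamard product expands as $\sum_{i,j}\lambda_i\mu_j\,(s_i\circ t_j)$, which is again a convex combination (the coefficients $\lambda_i\mu_j$ are nonnegative and sum to $1$) of the points $s_i\circ t_j\in S$. Hence $\conv(S)$ is closed under $\circ$. The same bilinearity argument, now without the normalization constraint, shows $\cone(S)$ is closed under $\circ$: a product of $\sum_i\lambda_i s_i$ and $\sum_j\mu_j t_j$ with $\lambda_i,\mu_j\ge 0$ is $\sum_{i,j}(\lambda_i\mu_j)(s_i\circ t_j)$, a nonnegative combination of points of $S$.

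For part (3), recall that $M_A(S)$ is (the closure of) the conical hull of $\varphi_A(S)$: every finitely-supported measure gives an $A$-moment vector that is a nonnegative combination of vectors $\varphi_A(x)$, $x\in S$, and such measures are weak-$*$ dense, so $M_A(S) = \overline{\cone(\varphi_A(S))}$. By parts (1) and (2), $\cone(\varphi_A(S))$ is closed under Hadamard multiplication. It remains to pass to the closure: I would use that Hadamard multiplication $\R^{|A|}\times\R^{|A|}\to\R^{|A|}$ is continuous, so if $u_k\to u$ and $v_k\to v$ with $u_k,v_k\in\cone(\varphi_A(S))$, then $u_k\circ v_k\to u\circ v$, giving $u\circ v\in\overline{\cone(\varphi_A(S))}=M_A(S)$.

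The only genuinely delicate point is the last one: the closure step in part (3) requires knowing that $M_A(S)$ really is the closure of $\cone(\varphi_A(S))$ and that a closure of a Hadamard-closed set remains Hadamard-closed. The latter is a soft topological fact (continuity of a bilinear map extends the algebraic identity $u\circ v\in C$ from $C$ to $\overline C$ by taking limits), so the main obstacle is essentially bookkeeping: making precise the identification of $M_A(S)$ with $\overline{\cone(\varphi_A(S))}$, which should already be available from the setup of the moment cone earlier in the paper. Everything else reduces to the bilinearity of $\circ$ and the multiplicativity $(x\circ y)^a = x^a y^a$ of monomials.
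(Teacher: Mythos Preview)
Your proof is correct and follows essentially the same approach as the paper: part (1) via the identity $\varphi_A(x\circ y)=\varphi_A(x)\circ\varphi_A(y)$, part (2) via bilinearity of $\circ$, and part (3) by combining (1) and (2) applied to $\cone(\varphi_A(S))$. You are in fact slightly more careful than the paper in part (3), explicitly handling the passage to the closure via continuity of Hadamard multiplication, whereas the paper's proof simply cites that $M_A(S)=\overline{\cone(\varphi_A(S))}$ and leaves the closure step implicit.
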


The moment cone is by definition the cone of all moment sequences $(m_{\alpha}\colon \alpha \in A)$, that is $m_\alpha$ is the integral $\int_S x^\alpha d\mu$ of $x^\alpha$ over the set $S$ with respect to some measure $\mu$. In general, this cone need not be closed. We denote its closure by $M_A(S)$ and call it the closed moment cone. When $A$ is finite, the closed moment cone coincides with the closed conical hull of $\varphi_A(S)$:
\[
  M_A(S) = \overline{\cone \varphi_A(S)}.
\]
See, for example, \cite[Theorems 1.24 and 1.26]{SchmudgenBook}.

\begin{proof}[Proof of \Cref{prop:had}]
For (1), note that for $x,y\in S$, $\varphi_A(x)\circ\varphi_A(y) = \varphi_A(x\circ y)$ belongs to $\varphi_A(S)$. 

For (2), let $x_1, \hdots, x_\ell, y_1, \hdots, y_m \in S$ and $\lambda_1, \hdots, \lambda_{\ell}, \mu_1, \hdots, \mu_m\in \R_{\geq 0}$. Then 
\[
\left(\sum_{i=1}^{\ell} \lambda_i x_i\right)\circ  \left(\sum_{j=1}^m \mu_j y_j\right)
= 
\sum_{i=1}^{\ell}\sum_{j=1}^m \lambda_i\mu_j (x_i  \circ y_j).
\]
This shows that the conical hull $\cone(S)$ also has the Hadamard property.  
Moreover, if $\sum_i \lambda_i = 1$ and $\sum_j \mu_j=1$, 
then $\sum_{i=1}^{\ell}\sum_{j=1}^m \lambda_i\mu_j  = 1$. 
The expression above then shows that the convex hull of $S$ also has the Hadamard property. 

For (3), as $M_A(S) = \overline{\cone
(\varphi_A(S))}$, it has the Hadamard property by parts (1) and (2).
\end{proof}

\begin{example}
A famous example of a set with Hadamard property is the cone of positive semidefinite $n \times n$ matrices $\mathcal{S}^n_+$, which is closed under Hadamard products by the Schur product theorem. We can recover the Schur product theorem from \Cref{prop:had} as follows. The cone $\mathcal{S}^n_+$ is the convex hull of rank one matrices \cite[Chapter II.12]{MR1940576} and the set $\{xx^t \colon x\in \R^n\}$ of rank $1$ matrices is the image of $\mathbb{R}^n$ under the monomial map $\varphi_A$  where $A\subset \mathbb{Z}^n_{\geq 0}$ is the set of all vectors in $\mathbb{Z}^n_{\geq 0}$ where the sum of coordinates is two.
\end{example}

For any semialgebraic set $S$, its tropicalization is a rational polyhedral fan. If, additionally, $S$ has the Hadamard property, then its tropicalization is closed under addition, so it is a rational convex polyhedral cone. By Proposition~\ref{prop:had}, $M_A(S)$ also has Hadamard property, so $\trop(M_A(S))$ is also a rational polyhedral cone.

One motivation for studying the tropicalization of a set $X$ is to study the 
set of pure binomial inequalities that are valid on $X$.  This is especially true when $X$ has the Hadamard property as the following two statements show. 

\begin{proposition}\label{prop:TropConeLog}
For any set $X\subset \R_{>0}^n$, $\trop(X)$ is contained in $\overline{\cone(\log(X))}$, with equality if $X$ has the Hadamard property.
\end{proposition}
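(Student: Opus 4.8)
The plan is to establish the two directions separately. For the containment $\trop(S) \subseteq \overline{\cone(\log(S))}$, I would work directly from the logarithmic limit set definition. Take a point $v \in \trop(S)$. By definition there exist points $x^{(t)} \in S \cap (\R_{>0})^n$ (using that $S \cap \R_{>0}^n$-type behavior is what feeds the log limit; more carefully, one should first reduce to $S \cap \R_{>0}^n$, since coordinates that vanish or are negative contribute nothing and the tropicalization only sees the positive part) such that $(\log_t x_1^{(t)}, \dots, \log_t x_n^{(t)}) \to v$ as $t \to \infty$. Writing $\log_t x^{(t)} = \frac{1}{\log t}\log(x^{(t)})$, we see that $v$ is a limit of nonnegative scalar multiples $\frac{1}{\log t}$ of vectors $\log(x^{(t)}) \in \log(S)$. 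Hence each approximant lies in $\cone(\log(S))$, and therefore $v \in \overline{\cone(\log(S))}$. This direction uses no Hadamard hypothesis, matching the statement.

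For the reverse containment under the Hadamard hypothesis, the key point is that $\log$ turns Hadamard multiplication into ordinary vector addition: $\log(x \circ y) = \log(x) + \log(y)$ coordinatewise, for $x, y \in S \cap \R_{>0}^n$. Consequently $\log(S \cap \R_{>0}^n)$ is closed under addition, i.e. it is a sub-semigroup of $\R^n$. I would then argue that for such a semigroup $G = \log(S \cap \R_{>0}^n)$, the closure of its conical hull coincides with the closure of $\overline{\{\frac{1}{\lambda} g : g \in G, \lambda \geq 1\}}$ — essentially that every ray direction realized in $\cone(G)$ is approximated by genuine semigroup elements rescaled, because one can take high multiples $g, 2g, 3g, \dots \in G$ and divide by growing scalars to approach the ray $\R_{\geq 0} \cdot g$. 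Combined with the definition of $\trop(S)$ as the logarithmic limit, where the rescaling factor $\frac{1}{\log t}$ plays exactly this role, this shows $\overline{\cone(\log(S))} \subseteq \trop(S)$. One should double-check that integer multiples $kg$ suffice to approximate all of $\cone(G)$: a general element of $\cone(G)$ is $\sum_i c_i g_i$ with $c_i \geq 0$; approximating the $c_i$ by rationals $p_i/q$ and clearing denominators, $q\sum c_i g_i \approx \sum p_i g_i \in G$, so the ray through $\sum c_i g_i$ meets $G$ up to arbitrarily small error, and dividing by the large integer $q$ (or sending it to infinity along multiples) places the point in the logarithmic limit set. Alternatively, one may cite \Cref{prop:TropConvCommuting} together with $\trop(\R_{>0}^n) = \R^n$ and a monomial-map argument, but the direct semigroup argument is cleaner.

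The main obstacle I anticipate is the careful handling of closures and of the passage between "conical hull of a semigroup" and "rescaled limits of semigroup elements" — in particular making sure that approximating a point of $\cone(\log S)$ by a rescaled honest element of $\log(S)$ (rather than of $\cone(\log S)$) is legitimate, which is exactly where the semigroup (Hadamard) structure is indispensable and where the non-Hadamard case genuinely fails (there one only gets the containment, not equality). A secondary technical point is the reduction to the strictly positive part: one must confirm that points of $S$ with some zero or negative coordinate do not enlarge $\trop(S)$ beyond what $\cone(\log(S \cap \R_{>0}^n))$ gives, which follows from the standard fact (used throughout the paper, cf. the discussion of \cite{JSY}) that $\trop(S)$ depends only on $S \cap (\cR^*)^n$ and, for the logarithmic limit set, only on the positive part.
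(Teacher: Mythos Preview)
Your proposal is correct and, for the containment $\trop(S)\subseteq\overline{\cone(\log S)}$, is exactly the paper's argument: rewrite the log-limit set as $\lim_{t\to\infty}\tfrac{1}{\log t}\log(S)$, observe each rescaling sits in $\cone(\log S)$, and pass to the closure.

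For the reverse containment under the Hadamard hypothesis the paper simply cites \cite[Lemma~2.2]{BRST}, whereas you sketch a self-contained proof. Your mechanism is the right one and is essentially what that lemma records: $\log(S\cap\R_{>0}^n)$ is an additive semigroup $G$, so for $g\in G$ all integer multiples $mg$ lie in $G$; choosing $m_k$ with $m_k/\log t_k\to c$ then exhibits $cg$ as a point of the log-limit set, and rational approximation of the coefficients in a conic combination $\sum c_i g_i$ reduces the general case to this one. Two small cleanups: (i) your sentence ``dividing by the large integer $q$\ldots places the point in the logarithmic limit set'' conflates two steps---the rational approximation produces $v\approx\tfrac{1}{q}h$ with $h\in G$, and then a \emph{separate} argument (using $mh\in G$ for all $m$ and matching $m/\log t$ to $1/q$) puts $\tfrac{1}{q}h$ into $\trop(S)$; (ii) since $S\subset\R_{\geq 0}^n$ there are no negative coordinates to worry about, only zeros, and those are discarded by definition of the log-limit set. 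With these tweaks your argument goes through and has the advantage of being self-contained; the paper's citation buys brevity.
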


The proposition still holds if the word {\em cone} is taken to mean just
  multiplying by positive constants, rather than conical hull.

\begin{proof}
Fix any base greater than one for $\log$. 
The tropicalization of $X$ can be written as the pointwise limit as 
$t\to \infty$ of the set $\frac{1}{t}\log(X)$. 
For each $t>0$, $\frac{1}{t}\log(X)$ is contained in the cone 
over $\log(X)$ and so the limit as $t\to \infty$ is contained in its closure, 
$\overline{\cone(\log(X))}$. 
Equality in the case that $X$ has the Hadamard property follows from \cite[Lemma 2.2]{BRST}.
\end{proof}

\begin{proposition}\label{prop:binomIneq}
Let $X\subseteq \R_{>0}^n$ and $\alpha = {\alpha_+}  - {\alpha_-}$ where ${\alpha_+}, {\alpha_-} \in \Z_{\geq 0}^n$. 
The linear inequality $\sum_{i=1}^n \alpha_i x_i\geq 0$ holds on $\cone(\log(X))$ 
if and only if the binomial inequality $x^{\alpha_+} \geq x^{\alpha_-}$ holds on $X$.  \end{proposition}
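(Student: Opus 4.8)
The plan is to exploit that coordinatewise $\log$ is an order-preserving bijection $\R_{>0}\to\R$ that turns products into sums, so that the binomial inequality $x^{\alpha_+}\ge x^{\alpha_-}$ on $S$ is literally the same statement as the linear inequality $\langle\alpha,\log(x)\rangle\ge 0$ on $\log(S)$, and then to pass from $\log(S)$ to $\cone(\log(S))$ for free because the linear inequality in question is homogeneous. First I would record the elementary identity valid for every $x\in\R_{>0}^n$ and $\alpha=\alpha_+-\alpha_-$ with $\alpha_\pm\in\Z_{\ge0}^n$:
\[
\sum_{i=1}^n \alpha_i \log(x_i) \;=\; \log\bigl(x^{\alpha_+}\bigr)-\log\bigl(x^{\alpha_-}\bigr).
\]
Since $x^{\alpha_-}>0$ and $\log$ is strictly increasing with $\log 1=0$, the left-hand side is $\ge 0$ if and only if $x^{\alpha_+}\ge x^{\alpha_-}$. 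This single equivalence is the whole content of the statement "pointwise on $S$".

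For the forward direction I would assume $\sum_i\alpha_i y_i\ge 0$ for every $y\in\cone(\log(S))$. Every $x\in S$ yields the point $y=\log(x)=(\log x_1,\dots,\log x_n)\in\log(S)\subseteq\cone(\log(S))$, hence $\sum_i\alpha_i\log(x_i)\ge 0$, and by the identity above $x^{\alpha_+}\ge x^{\alpha_-}$; as $x\in S$ was arbitrary, the binomial inequality holds on $S$. For the converse I would assume $x^{\alpha_+}\ge x^{\alpha_-}$ on $S$, so that $\sum_i\alpha_i\log(x_i)\ge 0$ for every $x\in S$. A general point of $\cone(\log(S))$ has the form $y=\sum_{j=1}^r\lambda_j\log(x_j)$ with $\lambda_j\ge0$ and $x_j\in S$, and then
\[
\sum_{i=1}^n\alpha_i y_i \;=\; \sum_{j=1}^r \lambda_j\Bigl(\sum_{i=1}^n\alpha_i\log(x_j)_i\Bigr)\;\ge\;0,
\]
being a nonnegative combination of nonnegative numbers (the empty combination giving the origin is trivial). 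Hence $\sum_i\alpha_i x_i\ge 0$ on all of $\cone(\log(S))$.

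There is essentially no obstacle here; the only points worth flagging are bookkeeping ones: the argument uses neither a closure nor the Hadamard property of $S$, and it goes through verbatim when $\cone$ is read as "multiplication by positive scalars" rather than "conical hull" (take $r=1$), which matches the remark following \Cref{prop:TropConeLog}. If one prefers, the two directions can even be merged into a single chain of iff's by noting that $\langle\alpha,\cdot\rangle\ge0$ holds on $\cone(\log(S))$ iff it holds on $\log(S)$ (homogeneity) iff it holds pointwise after applying the identity, iff $x^{\alpha_+}\ge x^{\alpha_-}$ on $S$.
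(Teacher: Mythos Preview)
Your proof is correct and follows essentially the same approach as the paper: reduce the claim to the pointwise equivalence between $x^{\alpha_+}\ge x^{\alpha_-}$ and $\sum_i\alpha_i\log(x_i)\ge 0$ via monotonicity of $\log$, and pass between $\log(S)$ and $\cone(\log(S))$ by homogeneity of the linear inequality. The paper's version is just your final ``merged chain of iff's'' written out directly.
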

\begin{proof}
First, note that a linear inequality holds on $\cone(\log(X))$ if and only if it 
holds on $\log(X)$.  Consider a point $x\in X$. The inequality 
 $x^{\alpha_+} \geq x^{\alpha_-}$ is equivalent to the Laurent inequality $x^{\alpha}\geq 1$ since $X\subset \R_{>0}^n$. 
 Because $\log$ is monotonic, $x^{\alpha}\geq 1$ if and only if 
 $\sum_{i=1}^n \alpha_i \log(x_i) \geq 0$. 
\end{proof}

\begin{remark}
  \label{rmk:pureBinomials}
Together, these propositions show that if $X\subseteq \R_{>0}^n$ 
has the Hadamard property, then $\trop(X)$ is a convex cone 
whose \emph{dual cone} consists of the set of pure binomial inequalities that
are valid on $X$. 
We need to consider only pure binomials in the presence of Hadamard
property because they are the strongest possible binomial inequalities.  
An inequality of the form $x^{\alpha_+} \geq c x^{\alpha_-}$ holds on $X$ if and only 
if $\sum_{i=1}^n \alpha_i \log(x_i) \geq \log(c)$ holds on $\log(X)$.  
Since $\trop(X) = \overline{\cone(\log(X))}$ is a convex cone, 
for any affine-linear inequality $\sum_{i=1}^n \alpha_i y_i \geq a_0$ that holds on 
$\trop(X)$, the (stronger) linear inequality  $\sum_{i=1}^n \alpha_i y_i \geq 0$ also holds. 
\end{remark}

We are particularly interested in semi-algebraic subsets of the nonnegative orthant given by pure binomial inequalities. These always have the Hadamard property.
\begin{lemma}\label{lem:hadamardForS}
  Let $S\subset \R_{\geq 0}^n$ be a semi-algebraic set given by the inequalities $x_i\geq 0$ ($i=1,2,\ldots,n$) and pure binomial inequalities $g_j = x^{a_j} -x^{b_j} \geq 0$ ($j=1,2,\ldots,r$). Then $S$ has the Hadamard property.
\end{lemma}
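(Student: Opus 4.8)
The plan is to verify the Hadamard property directly from the defining inequalities. Fix two points $x, y \in S$; I must show that their Hadamard product $x \circ y$ again satisfies every defining inequality of $S$. For the sign constraints this is immediate: since $x_i \geq 0$ and $y_i \geq 0$, we get $(x\circ y)_i = x_i y_i \geq 0$ for each $i$.

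For the binomial constraints the key algebraic fact is the multiplicativity of the monomial map on the nonnegative orthant: for any exponent vector $a \in \Z_{\geq 0}^n$ one has $(x\circ y)^a = \prod_i (x_i y_i)^{a_i} = x^a\, y^a$, and moreover $x^a \geq 0$ and $y^a \geq 0$ because $x, y \in \R_{\geq 0}^n$. Now fix $j$. From $x \in S$ we get $x^{a_j} \geq x^{b_j} \geq 0$, and from $y \in S$ we get $y^{a_j} \geq y^{b_j} \geq 0$. Using the elementary fact that $p \geq q \geq 0$ and $r \geq s \geq 0$ imply $pr \geq qs$ (since $pr \geq qr \geq qs$), we conclude
\[
(x\circ y)^{a_j} = x^{a_j} y^{a_j} \;\geq\; x^{b_j} y^{b_j} = (x\circ y)^{b_j},
\]
i.e. $g_j(x\circ y) \geq 0$. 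As this holds for every $j$, the point $x \circ y$ satisfies all the defining inequalities, so $x \circ y \in S$ and $S$ has the Hadamard property.

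There is essentially no obstacle here; the only point requiring a little care is that the monomials $x^{a_j}$ and $x^{b_j}$ are genuinely nonnegative — which is precisely why the sign constraints $x_i \geq 0$ need to be included in the description — so that multiplying the two comparisons is legitimate. Equivalently, one could phrase the argument as follows: $S$ is the intersection of $\R_{\geq 0}^n$, which visibly has the Hadamard property, with the sets $\{x \in \R_{\geq 0}^n : x^{a_j} \geq x^{b_j}\}$; an arbitrary intersection of sets closed under $\circ$ is again closed under $\circ$, and the computation above shows each of these sets is closed under $\circ$.
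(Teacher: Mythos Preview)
Your proof is correct and follows essentially the same approach as the paper: the paper checks nonnegativity of $(x\circ y)^{a_j} - (x\circ y)^{b_j}$ via the telescoping identity $(x\circ y)^{a} - (x\circ y)^{b} = (x^{a}-x^{b})\,y^{a} + (y^{a}-y^{b})\,x^{b}$, which is exactly your chain $pr \geq qr \geq qs$ rewritten as $pr - qs = (p-q)r + (r-s)q$. The only cosmetic difference is that the paper packages the two multiplications into one line.
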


\begin{proof}
  Let $x,y \in S$.
  Then every entry of the Hadamard product $x\circ y$ is already nonnegative, so it remains to check the binomial inequalities $g_j \geq 0$. For this, we compute 
  \[ (x\circ y)^a - (x\circ y)^b = (x^a-x^b)\cdot y^a + (y^a - y^b)\cdot x^b \]
  which shows that every binomial inequality $g_j \geq 0$ defining $S$ also holds for the point $x\circ y$.
\end{proof}

\subsection{Tropicalizations of Toric Spectrahedra} 
\label{subsec:toricspecs}
In this section, we consider toric spectrahedra and some generalizations relevant for pseudo-moment cones in the pure binomial case (see \Cref{subsec:pseudoPureBinomial}).
A \emph{toric spectrahedron} is a semialgebraic subset of $\mathbb{R}^n_{\geq 0}$ defined by the positive-semidefiniteness of a matrix with monomial (rather than linear) entries in a set of variables.   

Formally, let $A(x)$ be a symmetric matrix with entries that are monomials in variables $x=(x_1,\dots,x_n)$ and let $X$ be the subset of $\mathbb{R}^n_{\geq 0}$ consisting of the points $x$ which make matrix $A(x)$ positive semidefinite. 
By the Schur product theorem, every toric spectrahedron has the Hadamard property. %We assume in this section that $X\cap \R^n_{>0}$ is dense in $X$ so that $X\subset \overline{X\cap \R^n_{>0}}$. 

In the special case when each entry of $A(x)$ is one of the variables, so a monomial of degree $1$, the toric spectrahedron $X$ is a spectrahedron in the usual sense. An important example is the Hankel spectrahedron, the dual convex cone to the cone of sums of squares.

Let $X = \{x\in \R_{\geq 0}^n \colon A(x)\succeq 0\}$ be a toric spectrahedron.  Let $X'$ be the subset of $\mathbb{R}^n_{\geq 0}$ given by the values of $x$ which make all $2\times 2$ principal minors of $A(x)$ nonnegative. Clearly $X\subset X'$. Since the $2\times 2$ principal minors of $A(x)$ are pure binomials, $\trop(X')$ is a polyhedral cone given by tropicalization of $2\times 2$ principal minors of $A(x)$.  If $\trop (X')$ has non-empty interior in $\mathbb{R}^n$ and
no $2 \times 2$ principal minor of $A(x)$ is identically zero, then we in fact have, from \cite[Theorem 4.4]{BRST},
\begin{equation}
\label{eqn:monom}
\trop (X)=\trop (X').\end{equation}
That is, the tropicalization of toric spectrahedron of $A(x)$ is defined by tropicalization of $2\times 2$ minors of $A(x)$.

%\begin{proposition}[]\label{prop:monom}
%Suppose that $\trop (S')$ has non-empty interior in $\mathbb{R}^n$ and no $2 \times 2$ principal minor of $A$ is identically zero. Then $\trop (S)=\trop (S')$ and both are given by tropicalization of $2\times 2$ principal minors of $A$.
%\end{proposition}

\begin{remark} One can broaden the definition of toric spectrahedra to allow $X$ to not be contained in the nonnegative orthant.  However in this case we can replace $X$ with its image $|X|$ under coordinate-wise absolute value. 
By the Schur product theorem, $X$ has Hadamard property, and so does $|X|$. 
Then $\trop(|X|) = \trop(|X'|)$ and we can obtain all pure binomial inequalities in absolute values that are valid on $X$.\end{remark}

We can use similar ideas to deal with sets defined by differences of monomial matrices being positive semidefinite. 

\begin{proposition}\label{prop:binom}
Let $A(x)$ and $B(x)$ be symmetric matrices whose entries are pure monomials in the variables $x=(x_1,\dots,x_n)$. Let $X$ be the subset of $\mathbb{R}^n_{\geq 0}$ consisting of points $x$ such that $A\succeq B\succeq 0$, or equivalently, $A-B\succeq 0$ and $B \succeq 0$. 
%Assume that $X\subset \overline{X\cap \R^n_{>0}}$.
Let $X'$ be the subset of $\mathbb{R}^n_{\geq 0}$ given by the values of $x$ which make all $2\times 2$ principal minors of $A$ and $B$ nonnegative, and the diagonal entries of $A-B$ nonnegative. 
\begin{enumerate}
  \item The set $X$ has the Hadamard property.
  \item If $\trop (X')$ has non-empty interior in $\mathbb{R}^n$ and no $2 \times 2$ principal minor of $A$ or $B$ is identically zero, then $\trop (X)=\trop (X')$ and both are given by tropicalization of $2\times 2$ minors of $A$ and $B$ and diagonal entries of $A-B$.
\end{enumerate}
\end{proposition}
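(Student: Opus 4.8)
The plan is to prove part (1) by the same computation used in \Cref{lem:hadamardForS} and \Cref{prop:had}, and to prove part (2) by reducing to \Cref{prop:monom} applied to a single enlarged monomial matrix. For part (1), let $x,y\in S$, so $A(x)-B(x)\succeq 0$, $B(x)\succeq 0$, and similarly for $y$; we must show $A(x\circ y)-B(x\circ y)\succeq 0$ and $B(x\circ y)\succeq 0$. Since the entries of $A$ and $B$ are monomials, for any monomial matrix $M(x)$ with $(i,j)$ entry $x^{c_{ij}}$ we have the Hadamard (Schur) factorization $M(x\circ y) = M(x)\circ M(y)$ in the matrix sense, where $\circ$ is now the entrywise matrix product. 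The Schur product theorem gives $B(x\circ y) = B(x)\circ B(y)\succeq 0$ immediately. For the difference, write $A-B$ directly as a difference of two PSD monomial-structured matrices: the identity
\[
A(x\circ y) - B(x\circ y) \;=\; \bigl(A(x)-B(x)\bigr)\circ A(y) \;+\; \bigl(A(y)-B(y)\bigr)\circ B(x),
\]
which is the matrix analogue of the scalar identity in \Cref{lem:hadamardForS}, exhibits $A(x\circ y)-B(x\circ y)$ as a sum of two Schur products of PSD matrices, hence PSD. (One should double-check this identity entrywise; it holds because $A(x\circ y)_{ij} = A(x)_{ij}A(y)_{ij}$ and similarly for $B$, so the right side equals $A(x)_{ij}A(y)_{ij} - B(x)_{ij}A(y)_{ij} + A(y)_{ij}B(x)_{ij} - B(y)_{ij}B(x)_{ij} = A(x)_{ij}A(y)_{ij} - B(x)_{ij}B(y)_{ij}$.) This proves (1).

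For part (2), the strategy is to realize $S$ as (essentially) a toric spectrahedron of the type handled by \Cref{prop:monom}. The conditions $A\succeq B\succeq 0$ on $\R_{\geq 0}^n$ are not literally of the form ``monomial matrix $\succeq 0$,'' but $\left(\begin{smallmatrix} A-B & 0 \\ 0 & B\end{smallmatrix}\right)\succeq 0$ is equivalent to $A-B\succeq 0$ and $B\succeq 0$; since $A-B$ is a difference of monomial matrices rather than a monomial matrix, one instead works directly with the two conditions. The key point is that, arguing exactly as in the paragraph preceding \Cref{prop:monom}, $\trop(S')$ is the intersection of $\trop(\{\text{$2\times2$ minors of }A\geq 0\})$, $\trop(\{\text{$2\times2$ minors of }B\geq 0\})$, and $\trop(\{\text{diagonal of }A-B\geq 0\})$, all of which are polyhedral and cut out by tropicalized pure binomials, so $\trop(S')$ equals the set cut out by those tropical binomial inequalities. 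One always has $S\subset S'$ hence $\trop(S)\subseteq\trop(S')$. For the reverse inclusion, I would use the Fundamental Theorem / regular-support criterion (the Corollary to the Lemma on $\topint(\trop(S_1)\cap\trop(S_2))$): since $\trop(S')$ has non-empty interior, it suffices to show every interior point of $\trop(S')$ lies in $\trop(S)$. Pick such an interior point $w$ and lift it to a point $\xi$ in the positive orthant of the real closed field $\cR$ with $\trop(\xi)=w$; the strict tropical inequalities satisfied by $w$, together with \eqref{eqn:strictineq}, force the leading behavior of each $2\times2$ minor of $A(\xi)$ and $B(\xi)$ and each diagonal entry of $(A-B)(\xi)$ to be strictly positive. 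The remaining work is to upgrade ``all $2\times2$ principal minors positive plus diagonal of $A-B$ positive'' to ``$A(\xi)-B(\xi)\succeq 0$ and $B(\xi)\succeq 0$'': this is exactly the content of \Cref{prop:monom} for the matrix $B(\xi)$ (a genuine monomial matrix), while for $A(\xi)-B(\xi)$ one uses that over a non-archimedean field a symmetric matrix whose diagonal entries dominate (in valuation) all off-diagonal entries, with the leading forms of the dominant $2\times 2$ minors positive, is positive semidefinite — a standard diagonal-dominance-after-scaling argument. Thus $w\in\trop(S)$, giving $\trop(S')\subseteq\overline{\trop(S)}=\trop(S)$.

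I expect the main obstacle to be the last step: carefully justifying that the $2\times2$-minor and diagonal conditions actually force $A(\xi)-B(\xi)\succeq 0$ over $\cR$. For $B$ alone this is \Cref{prop:monom}, so the genuinely new part is handling the difference $A-B$, where $A-B$ is not a monomial matrix and its $2\times2$ minors need not be pure binomials. The cleanest route is probably: after a diagonal monomial rescaling $D(\xi)$ chosen so that $D^{-1}(A-B)D^{-1}$ has all diagonal entries of valuation $0$, the positivity of the tropical $2\times2$ minors of $A$ (which dominate those of $B$, by the diagonal condition) shows the off-diagonal entries of the rescaled matrix have strictly positive valuation, i.e. are infinitesimally small; hence the rescaled matrix is a small perturbation of a positive definite (diagonal) matrix and is itself positive definite, so $A(\xi)-B(\xi)\succeq 0$. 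Everything else is bookkeeping with the Fundamental Theorem and the already-established fact that tropicalization of pure binomials gives the linear inequalities cutting out $\trop(S')$.
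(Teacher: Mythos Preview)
Your proof is correct and follows essentially the same architecture as the paper's: the Schur-product identity for part (1), and for part (2) the reduction to showing that every interior point of $\trop(S')$ lies in $\trop(S)$ via an explicit positive-definiteness argument for the lifted point. Your identity in part (1) differs cosmetically from the paper's (they write $A(x)\circ(A(y)-B(y))+(A(x)-B(x))\circ B(y)$, you write $(A(x)-B(x))\circ A(y)+(A(y)-B(y))\circ B(x)$), but both are valid and rely on $A(y)\succeq 0$, which follows from $A(y)=(A(y)-B(y))+B(y)$.

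The one genuine difference is in how you execute the last step. The paper isolates this as a standalone lemma (\Cref{lem:manyBinom}) and proves it by expanding the Laplace determinant of each principal minor of $A(t^y)-B(t^y)$, showing the dominant term is the product of the diagonal entries of $A$. You instead sketch a diagonal-rescaling argument: conjugate $A-B$ by $D={\rm diag}(t^{y\cdot\alpha_{ii}^A/2})$ and observe the off-diagonal entries become infinitesimal. These are equivalent arguments; your version is slightly slicker for this specific statement, while the paper's lemma is stated for $A_0+\sum_k c_kA_k$ with arbitrarily many monomial summands, which they reuse later for localizing matrices of products $g^I$ in \Cref{lem:qmvspo}. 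One small point to tighten in your write-up: when you bound the off-diagonal entry coming from $B$, you need both the strict $2\times2$ inequalities for $B$ \emph{and} the diagonal inequalities $y\cdot\alpha_{ii}^A>y\cdot\alpha_{ii}^B$, not just the latter, to conclude $y\cdot\alpha_{ij}^B<\tfrac12(y\cdot\alpha_{ii}^A+y\cdot\alpha_{jj}^A)$; you allude to this but it deserves a sentence. Also, rather than invoking \Cref{prop:monom} as a black box for $B(\xi)\succeq 0$, it is cleaner to apply the same rescaling argument to $B$ directly, since \Cref{prop:monom} only gives membership in the tropicalization, not positive-definiteness at your chosen lift.
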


\begin{proof}
To show claim (1), suppose that $x,y \in \mathbb{R}^n$ such that $A(x)\succeq B(x)\succeq 0$ and $A(y)\succeq B(y)\succeq 0$. Then $B(x\circ y)=B(x)\circ B(y)\succeq 0$ by the Schur product theorem. Also by the Schur product theorem 
\[ A(x\circ y)-B(x\circ y)=A(x)\circ (A(y)-B(y))+(A(x)-B(x))\circ B(y)\succeq 0,\] 
and therefore $A(x\circ y)\succeq B(x\circ y)$ showing that $X$ has Hadamard property.

We now show claim (2). Since the $2\times 2$ minors of $A$ and $B$ as well as the diagonal entries of $A-B$ are pure binomials, the set $X'$ has the Hadamard property, see \Cref{lem:hadamardForS}.
So Propositions \ref{prop:TropConeLog} and \ref{prop:binomIneq} imply that $\trop(X')$ is given by tropicalization of $2\times 2$ minors of $A$ and $B$ as well as the diagonal entries of $A-B$.
So we only need to show $\trop(X) = \trop(X')$. The inclusion $\trop(X)\subseteq \trop(X')$ is immediate because $X\subseteq X'$.
To show the other inclusion $\trop(X')\subseteq \trop(X)$, it suffices to show that the interior of $\trop(X')$ is contained in $\trop(X)$, because $\trop (X')$ is full-dimensional and $\trop(X)$ is closed. This is a consequence of the following \Cref{lem:manyBinom} below, with $r=1$, $A_0(x) = A(x)$, $A_1(x) = B(x)$ and $c_1 = -1$. 
\end{proof}

Consider the polynomial matrix
\[
A(x) = A_0(x) + \sum_{k=1}^r c_k A_k(x)
\]
where $c_1, \hdots, c_r\in \R$ and each entry of $A_k(x)$ is a monomial in $x$. 
Specifically, suppose that the $(i,j)$th entry of $A_k(x)$ is $x^{\beta}$ where $\beta = \alpha_{ij}^k$. 
Let $S$ denote the set of points $x\in \R_{\geq 0}^n$ for which $A(x)\succeq 0$. 

\begin{lemma}\label{lem:manyBinom}
Let $y\in \R^n$ be a vector strictly satisfying the tropical inequalities given by 
the positivity of the $2\times 2$ minors of $A_k$ and the diagonal entries of $A_0 - A_k$. 
That is,
\begin{align*}
y\cdot \alpha_{ii}^0 > y\cdot \alpha_{ii}^k  & \ \  \text{ for all $i$ and all $k\neq 0$} \\
y\cdot \alpha_{ii}^k + y\cdot \alpha_{jj}^k  > 2 y\cdot \alpha_{ij}^k  & \ \  \text{ for all $i\neq j$ and all $k$}.
\end{align*}
Then for all sufficiently large $t\in \R$, $A(t^y)$ is positive definite and $y$ belongs to $\trop(S)$. Here, $t^y$ denotes the vector $(t^{y_1},\ldots,t^{y_n})$.
\end{lemma}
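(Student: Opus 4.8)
The plan is to analyze the asymptotic behavior of the entries of $A(t^y)$ as $t \to \infty$ and show that the dominant terms assemble into a positive definite matrix. First I would record that the $(i,j)$ entry of $A(t^y)$ is a sum $t^{y\cdot\alpha_{ij}^0} + \sum_{k=1}^r c_k t^{y\cdot\alpha_{ij}^k}$, so each entry is asymptotically governed by the largest exponent appearing. Set $d_i = y\cdot\alpha_{ii}^0$. The first tropical strict inequality says that on the diagonal the term coming from $A_0$ strictly dominates all others, so the $i$th diagonal entry of $A(t^y)$ is $t^{d_i}(1 + o(1))$. The natural move is then to conjugate by the diagonal matrix $D_t = \operatorname{diag}(t^{d_i/2})$ and study $\widetilde A(t) := D_t^{-1} A(t^y) D_t^{-1}$, whose diagonal entries are $1 + o(1)$ and whose $(i,j)$ off-diagonal entry is $t^{-(d_i+d_j)/2}\bigl(t^{y\cdot\alpha_{ij}^0} + \sum_k c_k t^{y\cdot\alpha_{ij}^k}\bigr)$. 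Since $A(t^y)$ is positive definite iff $\widetilde A(t)$ is (congruence by an invertible matrix), it suffices to show $\widetilde A(t) \to I$ as $t\to\infty$, i.e.\ that every off-diagonal entry tends to $0$.

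The key step is bounding the off-diagonal exponents. For each fixed $k$ (including $k=0$, where $A_0 - A_0 = 0$ is handled trivially or we argue directly), the second tropical strict inequality $y\cdot\alpha_{ii}^k + y\cdot\alpha_{jj}^k > 2\, y\cdot\alpha_{ij}^k$ — combined, for $k\neq 0$, with the diagonal domination $d_i = y\cdot\alpha_{ii}^0 > y\cdot\alpha_{ii}^k$ — gives $d_i + d_j > y\cdot\alpha_{ii}^k + y\cdot\alpha_{jj}^k > 2\, y\cdot\alpha_{ij}^k$. For $k = 0$ this reads $d_i + d_j = y\cdot\alpha_{ii}^0 + y\cdot\alpha_{jj}^0 > 2\, y\cdot\alpha_{ij}^0$ directly. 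In either case $y\cdot\alpha_{ij}^k < (d_i+d_j)/2$, so after multiplying by $t^{-(d_i+d_j)/2}$ each monomial in the $(i,j)$ entry of $\widetilde A(t)$ is a constant times $t^{\text{(negative exponent)}}$. Hence the off-diagonal entry of $\widetilde A(t)$ is a finite sum of terms going to $0$, so $\widetilde A(t)\to I$. Since $I$ is positive definite and positive definiteness is an open condition, there is $t_0$ with $\widetilde A(t)\succ 0$, equivalently $A(t^y)\succ 0$, for all $t\ge t_0$.

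Finally, to conclude $y\in\trop(S)$: for $t\ge t_0$ the point $t^y\in\R_{>0}^n$ lies in $S$. Passing to the real closed field $\cR$ of Puiseux series, the element $x = (s^{-y_1},\dots,s^{-y_n})$ (with $s$ the formal parameter of positive valuation, so $\val(x_i) = -y_i$) satisfies the same first-order formula defining $S$, because this is witnessed by the fact that $t^y \in S$ for all large real $t$ — more precisely, the semialgebraic condition $A(x)\succeq 0$ holds at $x$ in $\cR$ since, after the same diagonal congruence, the off-diagonal entries have strictly positive valuation and the diagonal entries have valuation $0$ with positive leading coefficient, making the congruent matrix positive definite over $\cR$. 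Therefore $\trop(x) = (-\val(x_1),\dots,-\val(x_n)) = y$ lies in $\trop(S_\cR) = \trop(S)$, as required.

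The main obstacle I anticipate is the bookkeeping in the last paragraph: one must make sure that "positive definiteness over the real closed field $\cR$" is correctly deduced from the valuation-theoretic picture (leading terms forming a positive definite real matrix), rather than just from the real asymptotics — this is where a clean statement that congruence preserves the relevant semialgebraic condition, and that a symmetric matrix over $\cR$ with residue a positive definite real matrix is itself positive definite, needs to be invoked carefully. The asymptotic estimates in the first two paragraphs are routine once the exponent inequalities are chained correctly; chaining them is the one place a sign or an index slip would be costly.
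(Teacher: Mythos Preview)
Your argument is correct and takes a genuinely different route from the paper. The paper expands each principal minor of $A(t^y)$ via the Leibniz formula, summing over permutations $\pi$ and choices $\sigma:[d]\to\{0,\ldots,r\}$ of which $A_k$ contributes each entry, and then chains the same two tropical inequalities to show that the exponent $\sum_i y\cdot\alpha_{i\pi(i)}^{\sigma(i)}$ is uniquely maximized at $\pi=\mathrm{id}$, $\sigma\equiv 0$; hence each principal minor has positive leading term in $t$. Your diagonal congruence $\widetilde A(t)=D_t^{-1}A(t^y)D_t^{-1}\to I$ reaches the same conclusion more cleanly: it replaces the sum over permutations by a single limit and uses only the entrywise inequality $y\cdot\alpha_{ij}^k<\tfrac12(d_i+d_j)$, which is exactly the same chaining of the two hypotheses. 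What the paper's expansion buys is an explicit identification of the leading exponent of each minor; what your argument buys is brevity and the avoidance of any combinatorics over $S_d$.

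One small point: your final paragraph, passing to the Puiseux series field and arguing positive definiteness over $\cR$ from residues, is more than is needed and has the wrinkle that $s^{-y_i}$ only makes sense when $y$ lies in the value group. The paper simply uses the logarithmic limit set description: since $t^y\in S$ for all large real $t$ and $\log_t(t^{y_i})=y_i$, the point $y$ lies in $\lim_{t\to\infty}\log_t(S)=\trop(S)$ directly. This sidesteps the bookkeeping you flagged as the main obstacle.
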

\begin{proof}
Showing that $A(t^y)$ is positive definite for all sufficiently large $t$ implies that $y$ belongs to $\trop(S)$. 
We do this by expanding the principal minors of $A(t^y)$. 
Because the entries of $A_k(x)$ are monomial in $x$, the entries of 
$A_k(t^y)$ are powers of $t$. Specifically, the $(i,j)$th entry of $A_k(t^y)$ is $t^b$ where $b = y\cdot \alpha_{ij}^k$. 

Consider the Laplace expansion of the determinant of $A(t^y)$, expanded as an exponential polynomial in $t$. 
We claim that $\sum_{i}y\cdot \alpha_{ii}^0$ will be the leading exponent  of $t$ that will be obtained uniquely by the product of the diagonal of $A_0$. 

To see this, note that for all $k$
\[
2y\cdot \alpha_{ij}^k < y\cdot \alpha_{ii}^k + y\cdot \alpha_{jj}^k\leq  y\cdot \alpha_{ii}^0 + y\cdot \alpha_{jj}^0
\]
where the second inequality is an equality only when $k=0$. 
Formally, consider the exponent of $t$ in the Laplace expansion of the determinant 
of $A(t^y)$ given by the permutation $\pi$ of $[d]$ and a choice $\sigma:[d]\to \{0,1,\hdots, r\}$ 
of one of the $r+1$ terms in the $(i,\pi(i))$th entry of $A(t^y)$. 
The resulting exponent of $t$ is  
\[
\sum_i y\cdot \alpha_{i\pi(i)}^{\sigma(i)}\leq  \frac{1}{2}\sum_i y\cdot \alpha_{ii}^{\sigma(i)}+ y\cdot \alpha_{\pi(i)\pi(i)}^{\sigma(i)}
\leq \sum_i y\cdot \alpha_{ii}^0.
\]
Moreover the equality is possible for the left and right hand sides only when $\pi$ is the identity permutation and $\sigma(i) = 0$ for all $i$. 

This shows that the Laplace expansion of the determinant of $A(t^y)$ has the form $t^{b}$, where $b=\sum_{i}y\cdot \alpha_{ii}^0$, plus terms of strictly lower degree in $t$. For large enough $t\in \R$, the determinant of $A(t^y)$ will therefore be positive.  The same argument can be made for all the principal minors of $A(t^y)$, showing that $A(t^y)$ is positive definite for large enough $t$. 
\end{proof}

% !TEX root =  MAIN.tex

\section{Convexity vs mid-point convexity}
\label{sec:convexity}

As we will see in Sections \ref{sec:tropmom} and \ref{sec:tsos}, the tropicalizations of the moment cone and the dual cone to the cone of sums of squares relate to two different notions of convexity for real functions on lattice points. 
%In this section we study combinatorial aspects of these notions and make the connection to moment and pseudo-moment cones in Sections \ref{sec:tropmom} and \ref{sec:tsos}.

\begin{definition}
Let $A \subset \ZZ^n$ be finite. We say that a function $h : A \rightarrow \RR$ is {\em convex} if $\sum \lambda_i h(a_i) \geq h(b)$ whenever $\sum \lambda_ia_i = b$ with $a_i,b\in A$, $\lambda_i\geq 0$, and $\sum \lambda_i=1$. 
We say that $h$ is {\em midpoint convex} if $\frac{h(a_1)+h(a_2)}{2} \geq h(\frac{a_1+a_2}{2})$ whenever $a_1,a_2,\frac{a_1+a_2}{2} \in A$. 
\end{definition}

As we explain in Sections \ref{sec:tropmom} and \ref{sec:tsos}, convex functions correspond to tropicalizations of moment cones and mid-point convex functions correspond to tropicalizations of pseudo-moment cones. 

Every convex function is mid-point convex, but the converse depends on the geometry of the set $A$. 
We denote
$$\mathcal K_A := \{ h \in \RR^A : h \text{ is convex}\} 
\quad \subseteq \quad
\mathcal M_A := \{ h \in \RR^A : h \text{ is midpoint convex}\}.$$
The sets $\mathcal K_A$ and $\mathcal M_A$ are defined by linear inequalities in $\RR^A$ and thus are convex cones. 
The description of $\mathcal M_A$ above involves only finitely many inequalities, and thus $\mathcal M_A$ is clearly a polyhedral cone.
The set $\mathcal K_A$ is also a polyhedral cone, as we explain below.
We also present the non-redundant inequality descriptions of these two polyhedral cones, and investigate exactly when they differ. 

Any function $h : A \rightarrow \RR$ induces a {\em marked regular subdivision} $\mathcal S_h$ of the point configuration $A$, by projecting back to $\RR^A$ the lower faces of the ``lifted polytope'' $P_h = \conv\{(a,h(a)) \in \RR^A \times \RR : a \in A \}$. A point $a \in A$ is marked in $\mathcal S_h$ if the corresponding point $(a,h(a))$ lies in a lower face of $P_h$. 
From this point of view, convex functions on $A$ are precisely the functions that induce a regular subdivision with all points marked.  See \cite{triangulations} for precise definitions.
Convex functions $h$ on $A$ are also precisely those that arise as the restriction to $A$ of a {maximum} of affine linear functions on $\RR^n$ (in other words, the tropical sum of affine linear functions on $\R^n$).

\begin{definition}
A subset $T = \{a_0, \dots, a_k, b\} \subset A$ with $k+2$ elements is called a $k$-dimensional {\em punctured simplex} if $a_0, \dots, a_k$ are affinely independent and $b$ lies in the $k$-dimensional simplex $\Delta = \conv(a_0, \dots, a_k)$, possibly on its boundary. We say that $T$ is an {\em almost-empty simplex} of $A$ if it is punctured and in addition $\Delta \cap A = T$ and $b$ lies in the relative interior of $\Delta$.

We say that a punctured simplex $T = \{a_0, \dots, a_k, b\} \subset A$ {\em certifies the non-convexity} of a function $h:A \to \mathbb R$ if $\sum_{i=0}^k \lambda_i h(a_i) < h(b)$ for scalars $\lambda_i \geq 0$ such that $\sum_{i=0}^k \lambda_i a_i = b$ and $\sum_{i=0}^k \lambda_i = 1$.
\end{definition}

Figure \ref{fig:puncturedsimplices} shows a set $A \subset \ZZ^2$ consisting of $7$ lattice points (marked as circles), and three different punctured simplices in it (corresponding to the filled circles). Only the punctured simplex on the right is an almost-empty simplex of $A$.

\begin{figure}[ht]
  \centering
  \subfloat[][]{
   \includegraphics[scale=1]{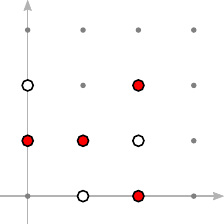}
   \label{fig:puncturedsimplex1}
  }
  \hspace{10mm}
    \subfloat[][]{
   \includegraphics[scale=1]{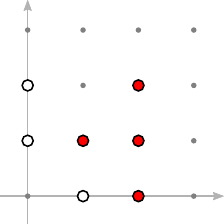}
   \label{fig:puncturedsimplex2}
  }
  \hspace{10mm}
  \subfloat[][]{
    \includegraphics[scale=1]{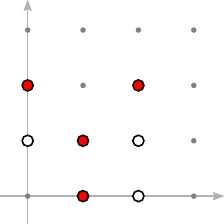}
   \label{fig:puncturedsimplex3}
  }
  \caption{A set of $7$ lattice points $A \subset \ZZ^2$ and three punctured simplices in it. Only the punctured simplex on the right is an almost-empty simplex of $A$.}
  \label{fig:puncturedsimplices}
\end{figure}

The next lemma is the key to providing the (non-redundant) facet description of the cone of convex functions on a finite set $A \subset \ZZ^n$. 

\begin{lemma}\label{lem:emptysimplex}
If $h:A \to \mathbb R$ is not convex then there is an almost-empty simplex $T \subset A$ that certifies its non-convexity.
\end{lemma}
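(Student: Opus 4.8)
The plan is to start from the failure of convexity and extract a minimal witness. Suppose $h$ is not convex. Then the set of punctured simplices $T = \{a_0,\dots,a_k,b\}\subset A$ that certify the non-convexity of $h$ is nonempty: indeed, by Carath\'eodory's theorem applied inside the affine hull of the offending data, any violated inequality $\sum \mu_j h(c_j) < h(b)$ with $\sum \mu_j c_j = b$, $\sum\mu_j = 1$, $\mu_j>0$ can be rewritten using at most $k+1$ of the points $c_j$ with $a_0,\dots,a_k$ affinely independent, and $b$ then lies in $\Delta = \conv(a_0,\dots,a_k)$. So among all certifying punctured simplices we may choose one, call it $T = \{a_0,\dots,a_k,b\}$, whose simplex $\Delta$ has \emph{inclusion-minimal} intersection $\Delta\cap A$ (and, as a tie-breaker, smallest dimension $k$). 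I claim this $T$ can be massaged into an almost-empty simplex.

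First I would argue that for a minimal certifying $T$ we may assume $b$ lies in the relative interior of $\Delta$. If $b$ lies on a proper face $\Delta'$ of $\Delta$, then the $\lambda_i$ corresponding to vertices not on $\Delta'$ vanish, so the vertices of $\Delta'$ together with $b$ already form a certifying punctured simplex whose simplex $\Delta'$ satisfies $\Delta'\cap A \subseteq \Delta\cap A$ and has strictly smaller dimension --- contradicting minimality unless $\Delta' = \Delta$. Hence $b\in\operatorname{relint}\Delta$. Next I would show $\Delta\cap A = T$, i.e.\ no point of $A$ other than $a_0,\dots,a_k,b$ lies in $\Delta$. Suppose $c\in (\Delta\cap A)\setminus T$. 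The key step is a \emph{subdivision-and-averaging} argument: triangulate $\Delta$ by coning from $c$ over the facets of $\Delta$ that do not contain $c$; the point $b$ lies in (the closure of) one such sub-simplex $\Delta_c$, whose vertex set $V$ is a subset of $\{a_0,\dots,a_k,c\}$ of size $k+1$ spanning the same affine space. Writing $b = \sum_{v\in V}\nu_v v$ with $\nu_v\ge 0$, $\sum\nu_v = 1$, I compare $\sum_{v\in V}\nu_v h(v)$ with $h(b)$. If this is $< h(b)$, then $(V,b)$ is a certifying punctured simplex with $\Delta_c\cap A \subsetneq \Delta\cap A$ (it omits at least one original vertex), contradicting minimality. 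If instead $\sum_{v\in V}\nu_v h(v)\ge h(b)$, then since $h(b) > \sum_{i}\lambda_i h(a_i)$ by hypothesis, we would get $\sum_{v\in V}\nu_v h(v) > \sum_i \lambda_i h(a_i)$; expressing $c$ itself as a convex combination of $a_0,\dots,a_k$ (it lies in $\Delta$) and substituting, one derives that the punctured simplex obtained by replacing $c$ by that combination still certifies non-convexity but uses only the original vertices, and a short linear-algebra computation shows the resulting inequality on $\{a_0,\dots,a_k,b\}$ is the one we already have --- so no contradiction there, but iterating the first horn of the dichotomy on each sub-simplex of the triangulation eventually produces a certifying punctured simplex strictly smaller than $T$, contradicting minimality. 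Hence $\Delta\cap A = T$, and combined with $b\in\operatorname{relint}\Delta$ this says $T$ is almost-empty.

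The cleanest way to organize the second half is probably to phrase it as: among \emph{all} certifying punctured simplices, pick one minimizing $|\Delta\cap A|$; then run the coning argument and observe that every sub-simplex $\Delta'$ of the triangulation from an interior point $c$ has $|\Delta'\cap A| < |\Delta\cap A|$, while at least one sub-simplex must again certify non-convexity (because $h$ restricted to the convex-combination data of $b$ over the vertices of $\Delta$ cannot be ``convex on every piece'' while failing globally --- a standard averaging identity $\sum_i\lambda_i h(a_i) = \sum_{\Delta'}(\text{weight of }\Delta')\cdot(\text{linear interpolant of }h\text{ at the vertices of }\Delta'\text{ evaluated at }b)$ forces at least one piece to violate the piecewise bound). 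That contradiction proves minimality already forces $\Delta\cap A = T$.

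The main obstacle I anticipate is making the averaging/piecewise step fully rigorous: one needs that if $h$ satisfied the convexity inequality on every sub-simplex of \emph{some} triangulation of $\Delta$ using lattice points of $A$, then it would satisfy it on $\Delta$ itself, so that global failure forces failure on a piece. This is true but requires care about the coefficients (the barycentric coordinates of $b$ in the sub-simplices, weighted by which piece contains $b$), and about the possibility that $b$ lies on a shared face of several pieces. Handling the case distinctions there --- and ensuring each induction step strictly decreases $|\Delta\cap A|$ so the process terminates at an almost-empty simplex --- is the technical heart of the argument; everything else is Carath\'eodory and bookkeeping.
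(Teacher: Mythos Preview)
Your overall strategy---choose a certifying punctured simplex with $|\Delta\cap A|$ minimal, then argue that any extra point $c\in(\Delta\cap A)\setminus T$ leads to a strictly smaller certifying simplex---is the same as the paper's. The gap is in the second half. Your coning-from-$c$ argument handles only one of two cases, and both of your attempts to close the other case are incorrect.

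After normalizing so that $h(a_i)=0$ for all $i$ and $h(b)>0$, write $b=\sum_{j\ne l}\nu_j a_j+\nu_c c$ for the sub-simplex of the coning that contains $b$. The certifying inequality for that sub-simplex reads $\nu_c\,h(c)<h(b)$. This holds automatically when $h(c)\le h(b)$ (since $\nu_c<1$), but it can \emph{fail} when $h(c)>h(b)$. Concretely, take $A=\{0,2,3,6\}\subset\Z$, $a_0=0$, $a_1=6$, $b=2$, $c=3$, and $h(0)=h(6)=0$, $h(2)=1$, $h(3)=10$. Coning from $c$ gives the pieces $[0,3]$ and $[3,6]$; the only piece containing $b$ is $[0,3]$, and there $\tfrac13 h(0)+\tfrac23 h(3)=\tfrac{20}{3}\ge h(2)$, so that piece does \emph{not} certify. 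Your averaging claim (``convex on every piece $\Rightarrow$ convex on $\Delta$'') is therefore false as stated: $h$ is convex on every piece of this triangulation but not on $\Delta$. Likewise, ``iterate the first horn on each sub-simplex'' does not make sense, since $b$ lies in exactly one sub-simplex and the others carry no constraint on $h(b)$.

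The missing idea is a genuine case distinction on the size of $h(c)$, with a \emph{different} construction in the large case. The paper normalizes $h(a_i)=0$, $h(b)>0$, and splits on $h(c)\ge h(b)$ versus $h(c)\le h(b)$. In the second case one drops a vertex $a_{l'}$ and keeps $b$ as the interior point of $\conv(a_0,\dots,\widehat{a_{l'}},\dots,a_k,c)$---this is essentially your coning move. In the first case one instead swaps roles: using the fan of cones through $b$ one finds $l$ with $c\in\conv(a_0,\dots,\widehat{a_l},\dots,a_k,b)$, so $T'=(T\setminus\{a_l\})\cup\{c\}$ is a punctured simplex with $c$ as interior point; it certifies because the convex combination of the vertex values is $\lambda_b' h(b)<h(b)\le h(c)$, and $|\conv(T')\cap A|<|\conv(T)\cap A|$ since $a_l\notin\conv(T')$. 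Equivalently: if coning from $c$ fails for $b$, then $\{a_0,\dots,a_k,c\}$ also certifies (since $0<h(c)$), and coning from $b$ \emph{does} work for $c$ because $h(b)<h(c)$. Either formulation supplies exactly the step your proposal lacks.
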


\begin{proof}
We first prove that there is a punctured simplex $T \subset A$ that certifies the non-convexity of $h$. Since $h$ is not convex, there exists $b \in A$ such that $(b,h(b))$ lies strictly above a lower face $\hat F$ of the lifted polytope $P_h = \conv\{(a,h(a)) \in \RR^A \times \RR : a \in A \}$. The face $\hat F$ projects down to a face $F$ in the marked regular subdivision $\mathcal S_h$ induced by $h$. The point $b$ is then in the convex hull of the marked points of $\mathcal S_h$ that lie in $F$, and by Carath\'eodory's Theorem, $b$ is in the convex hull of an affinely independent subset $a_0, \dots, a_k$ of them. The punctured simplex $\{a_0, \dots, a_k, b\} \subset A$ then certifies the non-convexity of $h$.

Now, take a punctured simplex $T = \{a_0, \dots, a_k, b\} \subset A$ that certifies the non-convexity of $h$ such that $\conv(T) \cap A$ has as few elements as possible. Note that $b$ must lie in the interior of the simplex $\Delta := \conv(a_0, \dots, a_k)$, as otherwise the proper face of $\Delta$ where $b$ lies would still certify the non-convexity of $h$ and contain fewer points of $A$. After adding a suitable affine function to $h$, we can assume that $h(a_i) = 0$ for all $i$ and $h(b)>0$.

Assume for contradiction that $T$ is not an almost-empty simplex, so $\Delta \cap A$ contains a point $a$ not in $T$. Suppose first that $h(a) \geq h(b)$. For $i = 0, \dots, k$, consider the $k$-dimensional (closed) affine cone $C_i$ with vertex $b$ and spanned by the rays $a_j - b$ with $j \neq i$. These affine cones $C_0, \dots, C_k$ form a fan that covers the whole affine span of $\Delta$, and so $a \in C_l$ for some $l$. This implies that $a \in \conv(a_0, \dots, \hat{a_l}, \dots, a_k, b)$. But then $T' := (T \setminus \{a_l\}) \cup \{a\}$ is a punctured simplex that certifies the non-convexity of $h$ and
$|\conv(T') \cap A| < |\conv(T) \cap A|$,
which is a contradiction. Now, suppose that $h(a) \leq h(b)$. Let $C'_i$ be the reflection of the cone $C_i$ across its vertex $b$. Again, the collection of cones $C'_0, \dots, C'_k$ cover the affine span of $\Delta$, so $a \in C_{l'}$ for some $l'$. This implies that $b \in \conv(a_0, \dots, \hat{a_{l'}}, \dots, a_k, a)$. But then $T' := (T \setminus \{a_{l'}\}) \cup \{a\}$ is a punctured simplex that certifies the non-convexity of $h$ and
$|\conv(T') \cap A| < |\conv(T) \cap A|$,
which is a contradiction.
\end{proof}

Note that the proof of the previous lemma provides a constructive argument for finding an almost-empty simplex that certifies the non-convexity of a function.

\begin{proposition}\label{p:conesfacets}
The set $\mathcal K_A$ is a polyhedral cone in $\RR^A$. Its facets correspond to inequalities of the form $\sum_{i=0}^k \lambda_i h(a_i) \geq h(b)$ where $\{a_0, \dots, a_k, b\} \subset A$ is an almost-empty simplex of $A$ and $\lambda_i > 0$ are the unique scalars such that $\sum \lambda_i a_i = b$ and $\sum \lambda_i = 1$. 
%
% Similarly, the set $\mathcal M_A$ % is a polyhedral cone in $\RR^A$.
\end{proposition}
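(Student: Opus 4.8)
\textbf{Proof plan for Proposition \ref{p:conesfacets}.}

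The plan is to treat the two cones separately, handling $\mathcal K_A$ in detail and then noting that $\mathcal M_A$ is a special case of the same argument. For $\mathcal K_A$, the defining condition ``$h$ is convex'' is, by definition, the conjunction over all ways of writing $b = \sum_i \lambda_i a_i$ with $a_i, b \in A$, $\lambda_i \geq 0$, $\sum_i \lambda_i = 1$, of the linear inequality $\sum_i \lambda_i h(a_i) \geq h(b)$. Each such inequality is linear and homogeneous in $h$, so $\mathcal K_A$ is an intersection of (a priori infinitely many) closed linear half-spaces through the origin, hence a closed convex cone. The first task is to show it is \emph{polyhedral}, i.e.\ that finitely many of these inequalities suffice. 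This is exactly what \Cref{lem:emptysimplex} gives us: if $h \notin \mathcal K_A$, then some almost-empty simplex $T = \{a_0,\dots,a_k,b\} \subset A$ certifies the non-convexity of $h$, and there are only finitely many subsets of the finite set $A$, hence only finitely many almost-empty simplices. For each almost-empty simplex $T$, the point $b$ lies in the \emph{relative interior} of $\Delta = \conv(a_0,\dots,a_k)$ and $a_0,\dots,a_k$ are affinely independent, so the barycentric coordinates $\lambda_i > 0$ with $\sum_i \lambda_i a_i = b$, $\sum_i \lambda_i = 1$ are \emph{unique}; this gives a single well-defined inequality $I_T\colon \sum_{i=0}^k \lambda_i h(a_i) \geq h(b)$ per almost-empty simplex. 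Thus $\mathcal K_A = \bigcap_{T} \{h : I_T(h) \geq 0\}$ is a finite intersection of half-spaces, hence a polyhedral cone.

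Next I would identify which of these inequalities are facet-defining. One inclusion is automatic: since $\{I_T : T \text{ almost-empty}\}$ is an irredundant-looking but possibly redundant description, every facet of $\mathcal K_A$ is induced by one of the $I_T$ (a facet of a polyhedron given by finitely many inequalities is always supported by one of those inequalities, after discarding redundant ones). The substantive direction is to show each $I_T$ is \emph{not} redundant, i.e.\ actually defines a facet. For a fixed almost-empty simplex $T = \{a_0,\dots,a_k,b\}$ I would exhibit a function $h$ lying on the hyperplane $\{I_T = 0\}$ but strictly satisfying $I_{T'} > 0$ for every other almost-empty simplex $T'$. The natural candidate: after an affine change (which does not affect convexity, since affine functions are both convex and concave on $A$), normalize so the inequality $I_T$ reads $\sum \lambda_i h(a_i) \geq h(b)$ with the $a_i$'s lifted to $0$; take $h$ to be $0$ on $a_0,\dots,a_k$ and on $b$, and on every other point $a \in A$ set $h(a)$ to be a very large positive number (larger for points further from $\Delta$, or simply uniformly huge). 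Such an $h$ is convex away from the configuration $T$ because huge lifts never create new lower faces, it satisfies $I_T$ with equality, and any other $I_{T'}$ either involves a point outside $\{a_0,\dots,a_k,b\}$ on its right-hand side (then $h(b') \gg 0$ while the left side is at least $0$, so strict), or has right-hand side $b$ and left-hand vertices among $\{a_0,\dots,a_k\}$—but then $T'$ would have to be a punctured simplex inside $\Delta$ different from $T$, contradicting $\Delta \cap A = T$. One must be slightly careful to choose the large values consistently so that $h$ is genuinely convex; this is the one place needing a short argument rather than a one-liner.

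Finally, for $\mathcal M_A$: the defining condition ``$h$ is midpoint convex'' is the conjunction of the inequalities $\frac{h(a_1) + h(a_2)}{2} \geq h\!\left(\frac{a_1+a_2}{2}\right)$ ranging over the \emph{finitely many} triples $(a_1, a_2, \frac{a_1+a_2}{2})$ all lying in the finite set $A$. Since there are only finitely many such triples, $\mathcal M_A$ is manifestly a finite intersection of closed linear half-spaces through the origin, hence a polyhedral cone; no analogue of \Cref{lem:emptysimplex} is needed. (The facet structure of $\mathcal M_A$ is not claimed in the statement, so I would not pursue it here.)

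\textbf{Main obstacle.} The only genuinely delicate point is the facet-defining claim for $\mathcal K_A$: constructing, for each almost-empty simplex $T$, a convex function tight on $I_T$ and strict on all other facet inequalities, and in particular verifying that the ``huge lift on outside points'' construction yields an honestly convex function. Everything else (closedness, conic, finitely many inequalities via \Cref{lem:emptysimplex}, the polyhedrality of $\mathcal M_A$) is routine once \Cref{lem:emptysimplex} is in hand.
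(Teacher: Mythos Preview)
Your plan is essentially the paper's proof. Both use \Cref{lem:emptysimplex} to reduce to finitely many inequalities (one per almost-empty simplex), then show each such inequality is irredundant by constructing an explicit witness $h$, and both note that $\mathcal M_A$ is trivially polyhedral since the midpoint conditions are already finite in number.

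The one place your argument wobbles is the case analysis for the witness. You write that if the interior point $b'$ of another almost-empty simplex $T'$ lies outside $T$, then ``$h(b') \gg 0$ while the left side is at least $0$, so strict''---but the inequality reads $\sum_j \mu_j h(c_j) \geq h(b')$, so a huge right-hand side makes it \emph{harder} to satisfy, not easier. What actually saves you is that the vertices $c_j$ of $T'$ cannot all lie in $T$ (else $b' \in \conv(c_j) \subset \Delta$ would force $b' \in \Delta \cap A = T$), so some $h(c_j)$ is also huge; but then you still need those huge values to be arranged strictly convexly, which is exactly the care you flag as the main obstacle. Your dichotomy also omits the case $b' = a_j$ for some vertex $a_j$ of $T$. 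The paper sidesteps all of this by setting $h(b) = \epsilon > 0$ small rather than $0$, producing an $h$ that \emph{violates} $I_T$ while satisfying every other $I_{T'}$ (irredundancy rather than tightness): since $h$ is chosen convex on $A \setminus \{b\}$ and $h(b)$ lies above the lower-hull value at $b$, every convexity inequality not centered at $b$ is automatic, and those centered at $b$ (other than $I_T$) must involve a vertex outside $\{a_0,\dots,a_k\}$ with huge value, so they hold with room to spare---no strictness bookkeeping needed.
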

We describe the facets of the cone $\mathcal M_A$ later in Proposition \ref{prop:facetsmidpoint}.
\begin{proof}
By Lemma \ref{lem:emptysimplex}, the set $\mathcal K_A$ is the collection of real functions $h$ on $A$ that satisfy the linear inequalities $\sum_{i=0}^k \lambda_i h(a_i) \geq h(b)$ for any almost-empty simplex $\{a_0, \dots, a_k, b\} \subset A$ and the unique scalars $\lambda_i \geq 0$ such that $\sum \lambda_i a_i = b$ and $\sum \lambda_i = 1$. In particular, this implies that $\mathcal K_A$ is a polyhedral cone. To see that all of these are indeed facet-defining inequalities, note that for any almost-empty simplex $T = \{a_0, \dots, a_k, b\} \subset A$ there exists an $h \in \RR^A$ that does not satisfy the inequality corresponding to $T$ but does satisfy all other inequalities. An example of such a function is obtained by taking a convex function $h$ on $A \setminus \{b\}$ satisfying $h(a_i) = 0$ for $i = 0,\dots,k$ and $h(a) \gg 1$ for all other $a \in A \setminus \{a_0,\dots,a_w,b\}$, and setting $0 < h(b) \ll 1$.
%  
%Similarly, the set $\mathcal M_A$ is defined by finitely many linear inequalities, so it is a polyhedral cone. 
\end{proof}

The previous proposition allows us to classify the sets $A$ for which midpoint convexity implies convexity.

\begin{corollary}
The inclusion of polyhedral cones $\mathcal K_A \subseteq \mathcal M_A$ is an equality if and only if $A$ does not contain an almost-empty simplex of dimension at least 2 and every $1$-dimensional almost-empty simplex $\{a_0, a_1, b\} \subset A$ satisfies $\frac{a_0 + a_1}{2} = b$.
\end{corollary}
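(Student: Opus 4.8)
The plan is to characterize when the facet-defining inequalities of $\mathcal K_A$ (described in \Cref{p:conesfacets}) all follow from the midpoint-convexity inequalities defining $\mathcal M_A$, and vice versa. Since $\mathcal K_A \subseteq \mathcal M_A$ always holds, equality is equivalent to $\mathcal M_A \subseteq \mathcal K_A$, i.e.\ every midpoint convex function on $A$ is convex. I would prove the two directions of the stated biconditional separately.

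For the ``only if'' direction, I would argue contrapositively: suppose $A$ contains an almost-empty simplex $T = \{a_0,\dots,a_k,b\}$ of dimension $k \geq 2$, or a $1$-dimensional almost-empty simplex $\{a_0,a_1,b\}$ with $\frac{a_0+a_1}{2} \neq b$. In either case I want to build a function $h \in \mathcal M_A \setminus \mathcal K_A$. Using the construction idea from the proof of \Cref{p:conesfacets}, take $h$ to be convex (hence midpoint convex) on $A \setminus \{b\}$ with $h(a_i)=0$ for the vertices of $T$ and $h(a) \gg 1$ for all other points of $A$, and then set $0 < h(b) \ll 1$. By the argument in \Cref{p:conesfacets} this $h$ fails exactly the facet inequality of $\mathcal K_A$ coming from $T$, so $h \notin \mathcal K_A$. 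The point is that $h$ is still midpoint convex: the only midpoint relation that could be violated involves $b$, i.e.\ requires $b = \frac{a+a'}{2}$ for some $a,a'\in A$; I would check that in the dimension-$\geq 2$ case no such relation with value constraints is violated (choosing $h(b)$ small enough and $h(a)$ on other points large enough handles any midpoint relation where $b$ is an endpoint, and $b$ being the midpoint of two vertices of $T$ is impossible when those are the only nearby points and $\Delta \cap A = T$ unless... ), and in the $1$-dimensional case the hypothesis $\frac{a_0+a_1}{2}\neq b$ is exactly what guarantees $b$ is not the midpoint of the only relevant pair. This case analysis — making sure the perturbed function stays in $\mathcal M_A$ — is the main obstacle, and requires care about which points of $A$ can form midpoint triples with $b$.

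For the ``if'' direction, assume $A$ has no almost-empty simplex of dimension $\geq 2$ and every $1$-dimensional almost-empty simplex $\{a_0,a_1,b\}$ has $b = \frac{a_0+a_1}{2}$. I must show every facet inequality of $\mathcal K_A$ is implied by midpoint convexity. By \Cref{p:conesfacets} each such inequality comes from an almost-empty simplex $T \subset A$; by hypothesis $T$ is $1$-dimensional, say $T = \{a_0,a_1,b\}$ with $b = \frac{a_0+a_1}{2}$, so the corresponding inequality is exactly $\tfrac12 h(a_0) + \tfrac12 h(a_1) \geq h(b)$, which is one of the defining inequalities of $\mathcal M_A$. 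Hence $\mathcal M_A \subseteq \mathcal K_A$, giving equality. The only subtlety here is confirming that a $0$-dimensional almost-empty simplex cannot occur (it would force $b = a_0 \in A$, impossible for distinct points) and that \Cref{lem:emptysimplex}/\Cref{p:conesfacets} indeed reduce all of $\mathcal K_A$ to almost-empty simplices, which is already established in the excerpt.
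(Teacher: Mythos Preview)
Your approach matches the paper's. The ``if'' direction is identical; for ``only if'' the paper likewise takes the function $h$ constructed in \Cref{p:conesfacets} and asserts (without spelling out the verification) that it lies in $\mathcal M_A\setminus\mathcal K_A$.

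To resolve the case analysis you flag as the main obstacle: since $h|_{A\setminus\{b\}}$ is convex it is the restriction of a convex function $g:\R^n\to\R$, and because $b$ lies in the relative interior of $\conv(a_0,\dots,a_k)$ one has $g(b)\le\sum_i\lambda_i h(a_i)=0<h(b)$. Any midpoint triple in $A$ avoiding $b$ is handled by convexity of $g$. If $b$ is an endpoint, say $c=\tfrac12(b+c_2)$ with $c,c_2\in A\setminus\{b\}$, then $2h(c)=2g(c)\le g(b)+g(c_2)<h(b)+h(c_2)$, so no delicate parameter tuning is needed. If $b$ is the midpoint, the only danger is $b=\tfrac12(a_i+a_j)$ for two vertices of $T$: for $k\ge 2$ this is impossible because $b$ is in the relative interior of $\Delta$ and hence not on any edge, and for $k=1$ it is ruled out by the hypothesis $b\neq\tfrac12(a_0+a_1)$; any other pair $c_1,c_2$ with $\tfrac12(c_1+c_2)=b$ has some $c_i\notin T$, whence $h(c_1)+h(c_2)\gg 1>2h(b)$.
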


\begin{proof}
By Proposition \ref{p:conesfacets}, if every almost-empty simplex of $A$ is one dimensional and has the form $\{a_0, a_1, \frac{a_0+a_1}{2}\}$ then all the facets of $\mathcal K_A$ are also facets of $\mathcal M_A$, and so $\mathcal K_A = \mathcal M_A$. Conversely, if $A$ contains an almost-empty simplex $T$ of dimension at least 2 or a 1-dimensional almost-empty simplex $T = \{a_0, a_1, b\} \subset A$ with $b \neq (a_0 + a_1)/2$, then a function $h \in \RR^A$ that satisfies all almost-empty simplex inequalities except for the one corresponding to $T$ is an example of a function in $\mathcal M_A$ but not in $\mathcal K_A$.
\end{proof}

\begin{remark}
It is not clear how to simply describe the lineality spaces and the extreme rays of the cones $\mathcal K_A$ and $\mathcal M_A$ for general $A$.
%Since $\mathcal K_A$ is a union of secondary
%cones, it is the positive span of the convex functions inducing coarsest subdivisions of $A$.  However enumerating them appears to be difficult in general.  
%On the other hand, the cone $\mathcal M_A$ is not always a union of secondary cones.
Both cones $\mathcal K_A$ and $\mathcal M_A$ contain the $(n+1)$-dimensional linear subspace of affine linear functions on $A$:
$$L_A = \{ h \in \RR^A :  h(w) = c \cdot w + d, \text{ where } c \in \RR^n, d \in \RR \}.$$
However, their lineality space may be larger. 
For example, if $A = \{0,1\}^2 \subset \RR^2$ the four vertices of a square, then every function on $A$ is convex and mid-point convex.

By definition, the lineality space of $\mathcal K_A$ consists of all the functions $h$ on $A$ such that both $h$ and $-h$ are convex, which means that for every $a \in A$ the lifted point $(a, h(a))$  is in both the upper hull and the lower hull of $P_h = \conv\{(a, h(a)) : a \in A\}$.  In particular, if $A$ contains a point in the interior of its convex hull, then the lineality space of $\mathcal K_A$ is equal to $L_A$.  More generally if $A$ contains a point in the relative interior of a face of $\conv(A)$, then any $h$ in the lineality space of $\mathcal K_A$ must be an affine linear function on that face.  At the other extreme, if every point of $A$ lies in a
simplicial face of $\conv(A)$, then every vertex can be lifted
independently and after that the lifts of other points are determined
uniquely by affinely interpolating, so the lineality space of $\mathcal K_A$ consists of piecewise linear functions induced by arbitrary functions on vertices of $A$.
\end{remark}

We now investigate restrictions of convex and midpoint convex functions to a smaller domain. Let $A \subset E \subset \ZZ^n$ be finite subsets. We consider the natural projection 
$$\pi_{A} : \RR^E \to \RR^A.$$

\begin{proposition}\label{prop:projection}
Suppose $A \subset E \subset \ZZ^n$ are finite subsets. We have
$$\pi_{A}(\mathcal K_E) = \mathcal K_A.$$
If, in addition, $A = \conv(A) \cap \ZZ^n$, we also have
$$\pi_{A}(\mathcal M_E) = \mathcal M_A.$$
\end{proposition}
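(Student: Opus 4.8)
The plan is to prove the two inclusions separately, in each case noting that one direction is essentially trivial and the other requires an explicit extension construction. For the first statement, $\pi_A(\mathcal K_E) \subseteq \mathcal K_A$ is immediate: if $h \in \mathcal K_E$ is convex on $E$, then any convexity inequality $\sum \lambda_i h(a_i) \geq h(b)$ with $a_i, b \in A$ and $\sum \lambda_i a_i = b$, $\sum \lambda_i = 1$ is in particular one of the defining inequalities for $\mathcal K_E$ (since $A \subseteq E$), so it holds for $h$; hence $\pi_A(h) = h|_A \in \mathcal K_A$. For the reverse inclusion $\mathcal K_A \subseteq \pi_A(\mathcal K_E)$, I would use the characterization of convex functions as restrictions of tropical sums of affine linear functions (recorded in the excerpt just after \Cref{lem:emptysimplex}): given $h \in \mathcal K_A$, write $h$ on $A$ as $w \mapsto \max_j (c_j \cdot w + d_j)$ for finitely many affine functionals, and then simply \emph{define} $\widetilde h$ on all of $E$ by the same formula $\widetilde h(w) = \max_j (c_j \cdot w + d_j)$. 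This $\widetilde h$ is convex on $E$ (being a max of affine functions) and restricts to $h$ on $A$, so $h \in \pi_A(\mathcal K_E)$.

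For the second statement, again $\pi_A(\mathcal M_E) \subseteq \mathcal M_A$ is immediate from the fact that every midpoint-convexity inequality with points in $A$ is among the defining inequalities of $\mathcal M_E$. The interesting direction, $\mathcal M_A \subseteq \pi_A(\mathcal M_E)$, is where the hypothesis $A = \conv(A) \cap \ZZ^n$ is used. Given $h \in \mathcal M_A$, I want to extend it to a midpoint-convex function on $E$. The natural idea is to push down: for $w \in E$, set $\widetilde h(w) = h(w)$ if $w \in A$, and for $w \in E \setminus A$ (which, by the hypothesis, means $w \notin \conv(A)$) set $\widetilde h(w) = +\infty$, or more precisely take $\widetilde h(w)$ to be a very large real number $N$ chosen uniformly large enough. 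The point is that the only midpoint-convexity constraints on $E$ that could be violated are those $\{a_1, a_2, b\}$ with $a_1 + a_2 = 2b$, $a_1, a_2, b \in E$. If all three lie in $A$, the inequality holds since $h \in \mathcal M_A$. If $b \notin A$, then $\widetilde h(b) = N$ while the left side is at least $\min_E h$ which is a fixed real number, so we need $N$ not too large in that one direction — but wait, the inequality we need is $\widetilde h(a_1) + \widetilde h(a_2) \geq 2\widetilde h(b)$, so if $b \notin A$ we need the left side large, which happens automatically only if at least one $a_i \notin A$; if $b \notin A$ but $a_1, a_2 \in A$, this would be a problem. Here the hypothesis $A = \conv(A)\cap \ZZ^n$ saves us: if $a_1, a_2 \in A \subseteq \conv(A)$ and $b = (a_1+a_2)/2 \in \ZZ^n$, then $b \in \conv(A) \cap \ZZ^n = A$, so this bad case cannot occur. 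If instead at least one of $a_1, a_2$ is outside $A$, then $\widetilde h$ on that point equals $N$, and by choosing $N$ larger than $2\max_{w\in E}\widetilde h_{\text{provisional}}(w)$ in an iterative/uniform fashion (or just noting the left side is $\geq N$ and the right side is $\leq 2N$ — which doesn't immediately close) I need to be slightly more careful.

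The cleaner fix, which I would actually write, is: set $\widetilde h(w) = h(w)$ for $w \in A$ and $\widetilde h(w) = N$ for $w \in E \setminus A$, where $N$ is chosen so that $N \geq 2\max_{a \in A} h(a) - 2\min_{a\in A} h(a) + \max_{a\in A} h(a)$, say, or simply $N$ huge. Then check each midpoint triple $\{a_1,a_2,b\}$ in $E$: (i) all in $A$ — holds by hypothesis on $h$; (ii) $b \in A$, at least one $a_i \notin A$ — then LHS $\geq N + \min_A h \geq 2\max_A h \geq 2h(b)$ = RHS for $N$ large; (iii) $b \notin A$ — then by the lattice-saturation hypothesis not both $a_i$ lie in $A$, so at least one $a_i \notin A$, giving LHS $\geq N + \min(N, \min_A h)$; if both $a_i \notin A$ then LHS $= 2N = 2\widetilde h(b)$, equality, fine; if exactly one $a_i \notin A$ then LHS $= N + h(a_i') \geq N \cdot$ something — here we'd want $N + \min_A h \geq 2N$, i.e. $\min_A h \geq N$, which fails. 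So case (iii) with exactly one $a_i$ outside $A$ is the genuine obstacle, and it shows the naive constant extension does not work; instead one must extend by something that grows, e.g. $\widetilde h(w) = \max(h\text{'s convex-type majorant}, \text{large affine functions forcing growth away from }\conv(A))$. Concretely, I expect the right construction is: let $\widetilde h(w) = \max\{\,\ell(w) : \ell \text{ affine}, \ell \leq h \text{ on } A\,\}$ pointwise (the convex lower envelope of $h$ relative to $A$, extended to all of $\RR^n$), restricted to $E$. This is convex, hence midpoint convex, on $E$; and the content to verify is that it restricts to $h$ on $A$ — which is \emph{not} automatic since $h$ need only be midpoint convex, not convex. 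This is exactly where the main difficulty lies, and it suggests the correct extension is instead built to preserve midpoint convexity directly, not convexity.

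\medskip

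The honest main-obstacle assessment: the hard part is the inclusion $\mathcal M_A \subseteq \pi_A(\mathcal M_E)$. One must produce, from a merely midpoint-convex $h$ on $A$, a midpoint-convex extension to $E$; a max-of-affine-functions extension would force convexity and may fail to interpolate $h$ correctly, while a constant ``$+\infty$ outside'' extension fails the midpoint triples that straddle $\partial\conv(A)$. The resolution I would pursue is to first reduce to the case $E = \conv(E)\cap\ZZ^n$ by the same saturation trick, then extend $h$ to $\conv(A)\cap\ZZ^n = A$ (nothing to do) and separately define $\widetilde h$ on the ``new'' lattice points $w \in E\setminus A$ by an inductive procedure on the distance of $w$ from $\conv(A)$, at each stage choosing $\widetilde h(w)$ large enough to dominate every midpoint inequality it participates in as the \emph{left} endpoint (which is possible because each such inequality bounds $\widetilde h(w)$ only from below once the value at the midpoint's other endpoint and at the midpoint are already fixed and finite) — and crucially using $A = \conv(A)\cap\ZZ^n$ to rule out triples where $w$ is the midpoint between two points of $A$. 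This inductive choice is the technical heart; the rest is bookkeeping.
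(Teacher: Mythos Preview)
Your argument for $\pi_A(\mathcal K_E) = \mathcal K_A$ is correct and is the same as the paper's: extending $h$ by the max of the supporting affine functions is exactly taking the lower convex hull of the lifted points $\{(a,h(a)):a\in A\}$, which is what the paper does.

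For $\pi_A(\mathcal M_E) = \mathcal M_A$ you eventually land on the right idea --- an inductive extension assigning large values to the new points --- and this is precisely the paper's construction. But your final paragraph has a residual imprecision that is worth fixing. You say the hypothesis $A=\conv(A)\cap\ZZ^n$ is used ``to rule out triples where $w$ is the midpoint between two points of $A$.'' That is not enough: in the induction you must also rule out triples where the new point $w$ is the midpoint of two \emph{already-defined} points, one or both of which may lie in $E\setminus A$ and were added at earlier stages. Your proposed ordering ``by distance from $\conv(A)$'' almost handles this (convexity of the distance function forces a midpoint's distance to be at most the max of the endpoints' distances), but ties can occur, and with an unlucky tie-breaking you could add $e$ and $e'$ before their midpoint $w$ even though all three sit at the same distance from $\conv(A)$. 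The paper resolves this by choosing an ordering $E\setminus A=\{e_1,\dots,e_m\}$ with the property that $\conv(A\cup\{e_1,\dots,e_i\})\cap E = A\cup\{e_1,\dots,e_i\}$ for every $i$; this guarantees that $e_{i}$ is never in the convex hull of the previously defined points and hence never a midpoint of two of them. Once you state the ordering this way, your ``choose $\widetilde h(e_i)$ large enough to beat all lower bounds $2\widehat h(e)-\widehat h(e')$ with $e,e'$ already defined'' step goes through verbatim, and the proof is complete.
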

\begin{proof}
The fact that $\pi_{A}(\mathcal K_E) = \mathcal K_A$ expresses the fact that any convex function $h \in \mathcal K_A$ can be extended to a convex function $\hat h \in \mathcal K_E$, for instance by defining $\hat h : E \to \RR$ to be the height of the lower convex hull of the lifted points $\{(a,h(a)): a \in A\} \subset \RR^n \times \RR$. 
If $A = \conv(A) \cap \ZZ^n$, we can also extend any midpoint-convex function $h \in \mathcal M_A$ to a midpoint-convex function $\hat h \in \mathcal M_E$. For instance, fix an ordering of the set $E \setminus A = \{e_1,\dots,e_m\}$ such that $\conv(A \cup \{e_1, \dots ,e_i\}) \cap E = A \cup \{e_1, \dots ,e_i\}$ for all $1 \leq i \leq m$. Then, recursively for $i = 1, \dots,m$, define $\hat h(e_i) \in \RR$ to be sufficiently large so that $\hat h(e_i) \geq 2 \hat h(e) - h(e')$ for any $e, e' \in A \cup \{e_1, \dots ,e_{i-1}\}$. Since $e_i$ cannot be the midpoint of any two points $e, e' \in A \cup \{e_1, \dots ,e_{i-1}\}$, this defines a midpoint-convex function $\hat h \in \mathcal M_E$ extending $h$.
\end{proof}

Let $A \subset \ZZ^n$, and take $E = \conv(A) \cap \ZZ^n$.
We are interested in understanding when the containment $\mathcal K_A \subset \pi_{A}(\mathcal M_E)$ of polyhedral cones is an equality.

Given any $S \subset \ZZ^n$, its set of \emph{midpoints} is
$$\Mid(S) = \{ (s+t)/2 \in \ZZ^n : s,t \in S \text{ and } s \neq t \}.$$
Suppose $V = \{v_0, \dots, v_k\} \subset \ZZ^n$ is a set of affinely independent lattice points,
and let $F = \conv(V) \cap \ZZ^n$. 
A subset $S \subset F$ is called \emph{$V$-mediated} if $S \supset V$ and 
$S \setminus V \subset \Mid(S)$.
As $V$-mediated subsets are closed under union, there is a maximal $V$-mediated subset of $F$, 
which we denote by $V^*$. It can be computed by starting with the set $F$ and repeatedly
iterating the map $X \mapsto V \cup \Mid(X)$, i.e., iteratively discarding points that are neither in $V$ nor are midpoints of other points in the set. 
It was shown in \cite[Theorem (2.2)]{Reznick89} that this procedure stabilizes (because it is a decreasing sequence of finite sets) and the fix point is the maximal $V$-mediated set $V^*$. 

\begin{example}
If $V_1 = \{ (0,0), (1,2), (2,1) \}$ then the maximal $V_1$-mediated set $V_1^*$ is equal to $V_1$. 
The point $(1,1)$ is in $\conv(V_1) \cap \ZZ^n$, but not in $V_1^*$.
On the other hand, if $V_2 = \{ (0,3), (1,0), (3,1) \}$ then the maximal $V_2$-mediated set $V_2^*$ is equal to the set of six points in $\conv(V_2) \cap \ZZ^n$; see Figure \ref{fig:pointconfigurations}.
\begin{figure}[ht]
  \centering
  \subfloat[][]{
   \includegraphics[scale=1.2]{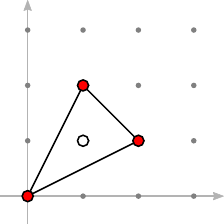}
   \label{fig:motzkintriangle}
  }
  \hspace{30mm}
  \subfloat[][]{
    \includegraphics[scale=1.2]{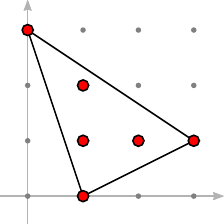}
   \label{fig:sixpointstriangle}
  }
  \caption{Two lattice triangles. Filled in red are the points in their corresponding maximal mediated sets.}
  \label{fig:pointconfigurations}
\end{figure}
\end{example}

Our interest in $V$-mediated sets comes from the following fact.
\begin{lemma}[{\cite{Reznick89}}]\label{l:reznick}
Suppose $V = \{v_0, \dots, v_k\} \subset \ZZ^n$ is a set of affinely independent points,
and let $F = \conv(V) \cap \ZZ^n$. Assume $w \in F \setminus V$, and let $T = V \cup \{w\}$. 
Then $\pi_{T}(\mathcal M_F)$ is equal to the halfspace $\mathcal K_T \subset \RR^T$ if and only if 
$w$ belongs to the maximal $V$-mediated subset $V^*$.
\end{lemma}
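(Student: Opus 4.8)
The plan is to reduce the claimed equality to a single scalar inequality and then characterize when it holds. First I would record that $\mathcal K_T$ is genuinely a halfspace: the only almost-empty simplex contained in $T = V \cup \{w\}$ has apex $w$ (the points of $V$ are affinely independent vertices of $\conv(V)$), so by \Cref{p:conesfacets} the cone $\mathcal K_T$ equals $\{g \in \RR^T : \sum_i \lambda_i g(v_i) \ge g(w)\}$. Since every convex function is midpoint convex we have $\mathcal M_F \supseteq \mathcal K_F$, and \Cref{prop:projection} gives $\pi_T(\mathcal M_F) \supseteq \pi_T(\mathcal K_F) = \mathcal K_T$. Hence $\pi_T(\mathcal M_F) = \mathcal K_T$ is equivalent to $\pi_T(\mathcal M_F) \subseteq \mathcal K_T$, i.e.\ to the statement that every midpoint-convex $h$ on $F$ satisfies $h(w) \le \sum_i \lambda_i h(v_i)$. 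The whole lemma thus becomes: this inequality holds for all midpoint-convex $h$ on $F$ if and only if $w \in V^*$.

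For the direction ``$w \notin V^* \Rightarrow$ there is a violating $h$'' I would build an explicit witness from the stabilizing iteration $X_0 = F$, $X_{j+1} = V \cup \Mid(X_j)$, which is decreasing with limit $V^*$, say reached at step $N$. If $w \notin V^*$ it lies in $X_j \setminus X_{j+1}$ for a unique $j < N$; I would set $h \equiv 0$ on $V^*$ and $h(u) = 2^{N-j'}$ for $u \in X_{j'} \setminus X_{j'+1}$. Then $h$ vanishes on $V \subseteq V^*$ (so $\sum_i \lambda_i h(v_i) = 0 < h(w)$), and $h$ is midpoint convex: for $u = (a+b)/2 \in F$ with $a \ne b$ that is dropped at stage $j$, the fact that $u \notin \Mid(X_j)$ forces one of $a,b$ to have left $X_j$ earlier, hence to carry $h$-value at least $2^{N-j+1} = 2h(u)$, making $\tfrac12(h(a)+h(b)) \ge h(u)$; the remaining case $u \in V^*$ is immediate as $h \ge 0$. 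The exponential weighting (rather than linear) is what makes this work.

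For the direction ``$w \in V^* \Rightarrow$ the inequality holds for all midpoint-convex $h$'' I would exploit that $V^*$ is $V$-mediated to choose, for each $u \in V^* \setminus V$, points $a_u \ne b_u$ in $V^*$ with $u = \tfrac12(a_u + b_u)$, and then run the absorbing random walk on $V^*$ that from a non-vertex $u$ jumps to $a_u$ or $b_u$ with probability $\tfrac12$ and stops on $V$ (equivalently, analyze the associated substochastic matrix on $V^* \setminus V$). The key point --- and the main obstacle --- is to show this walk actually reaches $V$: the midpoint decompositions can be cyclic (trisection points of a segment reference each other), so a naive induction on a ``depth'' fails; instead I would argue that any closed class $C \subseteq V^* \setminus V$ would contain a vertex of $\conv(C)$, which cannot be the midpoint of two distinct points of $C$, a contradiction. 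Once the walk is known to terminate, midpoint convexity makes $h(Z_{k\wedge\tau})$ a bounded submartingale, giving $h(w) \le \sum_i \mathbb P(Z_\tau = v_i)\, h(v_i)$, while the coordinate process is a martingale, so $w = \sum_i \mathbb P(Z_\tau = v_i)\, v_i$ and affine independence of $V$ identifies $\mathbb P(Z_\tau = v_i)$ with $\lambda_i$; combining gives $h(w) \le \sum_i \lambda_i h(v_i)$. Alternatively one can phrase this purely linear-algebraically: convergence of $\sum_k N^k$ lets one solve the recursion $h(u) = \tfrac12 h(a_u) + \tfrac12 h(b_u) - \varepsilon_u$ with $\varepsilon_u \ge 0$, avoiding probability. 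Everything outside the termination argument is routine bookkeeping with the iteration $X_{j+1} = V \cup \Mid(X_j)$ and the reduction of the first paragraph.
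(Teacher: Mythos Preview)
Your argument is correct. The paper does not supply its own proof---the lemma is attributed to \cite{Reznick89}---but Reznick's method is on display in the proof of \Cref{prop:facetsmidpoint}, and your route differs from it. For $w\in V^*$, Reznick encodes a choice of midpoint decompositions on $V^*\setminus V$ in a matrix $M$ (diagonal entries $-2$, at most two $+1$'s per column) and invokes a sign-pattern lemma (\cite[Lemma~4.3]{Reznick89}, used verbatim in the proof of \Cref{prop:facetsmidpoint}) to get $M^{-1}\le 0$, so that $Mx=-2e_w$ has a nonnegative solution exhibiting the convexity inequality as a conic combination of midpoint inequalities. Your absorbing random walk is the probabilistic translation: $-\tfrac12 M = I-P$ for your substochastic transition matrix $P$ on $V^*\setminus V$, and your ``a vertex of $\conv(C)$ cannot be a midpoint of two points of $C$'' observation is exactly what rules out closed classes, i.e.\ forces $\sum_k P^k$ to converge (equivalently $M$ invertible); you flag this equivalence yourself at the end. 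For $w\notin V^*$, your explicit witness $h(u)=2^{N-j'}$ built from the descending iteration $X_{j+1}=V\cup\Mid(X_j)$ is a direct primal construction, whereas the argument in the paper (first half of the proof of \Cref{prop:facetsmidpoint}) takes the dual view, showing that any conic combination of midpoint inequalities equal to $e_{a_1}+e_{a_2}-2e_b$ forces $b$ into a set satisfying a mediated-type condition. Both packages buy the same thing; yours is arguably cleaner conceptually, Reznick's is what the paper already has on hand.
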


\begin{corollary}\label{cor:projequality}
Let $A \subset \ZZ^n$ be finite, and $E \supseteq \conv(A) \cap \ZZ^n$. 
Then $\mathcal K_A = \pi_{A}(\mathcal M_E)$ if and only if 
for every almost-empty simplex $T = \{a_0, \dots, a_k, b\} \subset A$ with vertices $a_0,\dots, a_k$, the relative interior point $b$ belongs to the 
maximal $V$-mediated subset $V^*$, where $V = \{a_0, \dots, a_k\}$.
\end{corollary}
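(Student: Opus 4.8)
The plan is to reduce the statement about the projection $\pi_A(\mathcal M_E)$ to the facet description of $\mathcal K_A$ from Proposition \ref{p:conesfacets}, and then handle each facet inequality separately using Lemma \ref{l:reznick}. Since $\mathcal K_A \subseteq \mathcal M_A = \pi_A(\mathcal M_E)$ always holds (the inclusion $\mathcal K_A \subseteq \mathcal M_A$ is automatic and $\pi_A(\mathcal M_E) = \mathcal M_A$ by Proposition \ref{prop:projection}, using $A = \conv(A)\cap\ZZ^n$ after possibly enlarging — wait, here $A$ need not equal $\conv(A)\cap\ZZ^n$, so I should be careful: the cleaner route is to work directly with $E \supseteq \conv(A)\cap\ZZ^n$), the content is the reverse inclusion $\pi_A(\mathcal M_E) \subseteq \mathcal K_A$, i.e. that every midpoint-convex function on $E$ restricts to a convex function on $A$, precisely under the stated mediated-set hypothesis.

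First I would observe that $\pi_A(\mathcal M_E) \subseteq \mathcal K_A$ holds if and only if every facet-defining inequality of $\mathcal K_A$ is valid on $\pi_A(\mathcal M_E)$. By Proposition \ref{p:conesfacets}, these facets are exactly the almost-empty simplex inequalities $\sum_{i=0}^k \lambda_i h(a_i) \geq h(b)$ for almost-empty simplices $T = \{a_0,\dots,a_k,b\} \subset A$. So the claim becomes: $\pi_A(\mathcal M_E)$ satisfies the $T$-inequality for every almost-empty simplex $T \subset A$ if and only if for each such $T$, with $V = \{a_0,\dots,a_k\}$, the point $b$ lies in $V^*$. For the forward direction of the corollary, I would contrapose: if some almost-empty simplex $T = \{a_0,\dots,a_k,b\}$ has $b \notin V^*$, then by Lemma \ref{l:reznick} (applied with $F = \conv(V)\cap\ZZ^n$, which is contained in $E$ since $\conv(V) \subseteq \conv(A)$) there is a midpoint-convex function on $F$ whose restriction to $T$ violates $\mathcal K_T$; extend it to a midpoint-convex function $\hat h$ on $E$ via the recursive "lift high" construction from the proof of Proposition \ref{prop:projection} (valid because $F = \conv(V)\cap\ZZ^n = \conv(F)\cap\ZZ^n$ and one can order $E\setminus F$ keeping the convexity-closedness condition), obtaining an element of $\pi_A(\mathcal M_E)$ that violates the $T$-inequality, hence $\pi_A(\mathcal M_E) \not\subseteq \mathcal K_A$. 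For the reverse direction, suppose $b \in V^*$ for every almost-empty simplex $T$; given any $\hat h \in \mathcal M_E$ and any almost-empty simplex $T \subset A$, restrict $\hat h$ to $F = \conv(V)\cap\ZZ^n \subseteq E$; this restriction is still midpoint-convex, so by the "only if" direction of Lemma \ref{l:reznick} (the case $b\in V^*$) its further restriction to $T$ lies in the halfspace $\mathcal K_T$, i.e. satisfies the $T$-inequality. Since this holds for every facet of $\mathcal K_A$, we get $\pi_A(h) \in \mathcal K_A$.

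The main obstacle I anticipate is bookkeeping about which ambient lattice set we restrict to: Lemma \ref{l:reznick} is stated for $F = \conv(V)\cap\ZZ^n$ with $V$ affinely independent, and I must make sure that (a) restricting a midpoint-convex function on $E$ to $F$ stays midpoint-convex — this is immediate since $F \subseteq E$ and midpoint-convexity is inherited by subsets containing all relevant midpoints, but one should note $F = \conv(F)\cap\ZZ^n$ so no midpoint condition is lost; and (b) extending a midpoint-convex function from $F$ to $E$ — the recursive construction in the proof of Proposition \ref{prop:projection} requires an ordering $e_1,\dots,e_m$ of $E \setminus F$ with $\conv(F \cup \{e_1,\dots,e_i\})\cap E$ staying in $F \cup \{e_1,\dots,e_i\}$, which exists by taking the $e_i$ in any order of nondecreasing "distance" from $\conv(F)$ (more precisely, repeatedly peel off a point of $E$ not in the convex hull of the rest). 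A secondary subtlety is that different almost-empty simplices $T$ share an ambient $E$ but have different $F$'s; since we only ever restrict (never need a single global extension) in the reverse direction, and in the forward direction we only need one bad $T$, this causes no conflict. Everything else is a direct assembly of Proposition \ref{p:conesfacets}, Proposition \ref{prop:projection}, and Lemma \ref{l:reznick}.
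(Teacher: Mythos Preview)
Your proposal is correct and follows essentially the same route as the paper: reduce to the facet inequalities of $\mathcal K_A$ via Proposition~\ref{p:conesfacets}, then handle each almost-empty simplex $T$ separately by passing to $F=\conv(V)\cap\ZZ^n$ and invoking Lemma~\ref{l:reznick}. The only difference is packaging: where you explicitly restrict and extend midpoint-convex functions (and re-derive the peeling/ordering argument for the extension from $F$ to $E$), the paper simply cites Proposition~\ref{prop:projection} to get $\pi_F(\mathcal M_E)=\mathcal M_F$ and $\pi_T(\mathcal K_A)=\mathcal K_T$, then strings together the equalities $\pi_T(\mathcal M_F)=\pi_T(\mathcal M_E)=\pi_T(\pi_A(\mathcal M_E))$ directly. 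You could shorten your forward direction the same way rather than rebuilding the extension construction.
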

\begin{proof}
Suppose $\mathcal K_A = \pi_{A}(\mathcal M_E)$.
Fix an almost-empty simplex $T = \{a_0, \dots, a_k, b\} \subset A$, and let $F = \conv(T) \cap \ZZ^n$. 
By Proposition \ref{prop:projection}, we have $\pi_{F}(\mathcal M_E) = \mathcal M_F$.
It follows that $\pi_{T}(\mathcal M_F) = \pi_{T}(\mathcal M_E) = \pi_{T}(\pi_{A}(\mathcal M_E)) = 
\pi_{T}(\mathcal K_A) = \mathcal K_T$,
so by Lemma \ref{l:reznick} we have that $b$ belongs to the 
maximal $V$-mediated subset $V^*$, where $V = \{a_0, \dots, a_k\}$.

Conversely, assume that for every almost-empty simplex $T = \{a_0, \dots, a_k, b\} \subset A$, 
the point $b$ belongs to the maximal $V$-mediated subset $V^*$, where $V = \{a_0, \dots, a_k\}$.
By Lemma \ref{l:reznick}, for every almost-empty simplex $T \subset A$, the set 
$\pi_{T}(\mathcal M_F)$ is equal to $\mathcal K_T$, where $F = \conv(T) \cap \ZZ^n$. 
By Proposition \ref{prop:projection}, we then have $\mathcal K_T = \pi_{T}(\mathcal M_F) = \pi_{T}(\pi_{F}(\mathcal M_E)) = \pi_{T}(\pi_{A}(\mathcal M_E))$, 
and thus $\pi_{A}(\mathcal M_E)$ is contained in the halfspace 
$\mathcal K_T \times \RR^{A \setminus T} \subset \RR^A$.
By Proposition \ref{p:conesfacets}, the cone $\mathcal K_A$ is equal to the intersection 
of these halfspaces, and thus $\pi_{A}(\mathcal M_E) = \mathcal K_A$.
\end{proof}

The proof of Lemma \ref{lem:emptysimplex} given in \cite{Reznick89} generalizes directly 
to provide the non-redundant inequality description of the cone $\mathcal M_A$ of midpoint-convex functions.

\begin{proposition}\label{prop:facetsmidpoint}
The facets of the polyhedral cone $\mathcal M_A$ correspond to inequalities of the form $\frac{h(a_1) + h(a_2)}{2} \geq h(b)$ with $a_1, a_2, b \in A$, $\frac{a_1 + a_2}{2} = b$, and $b$ not belonging to any subset $B \subset \conv(a_1, a_2) \cap A$ such that:
\begin{enumerate}
\item[(*)] for all $s \in B$ with $s \neq a_1, a_2$ there exist distinct $s_1, s_2 \in B$ satisfying $s_1 + s_2 = 2s$ and $\{s_1, s_2\} \neq \{a_1, a_2\}$.
\end{enumerate}
\end{proposition}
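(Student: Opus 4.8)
The plan is to mimic the proof of Lemma \ref{lem:emptysimplex} (which produces almost-empty simplices certifying non-convexity) in the one-dimensional ``midpoint'' setting, thereby identifying which midpoint inequalities $\frac{h(a_1)+h(a_2)}{2}\ge h(b)$ (with $2b=a_1+a_2$) are redundant as consequences of other such inequalities, and which are facet-defining. Since $\mathcal M_A$ is a polyhedral cone cut out by the midpoint inequalities indexed by all triples $(a_1,a_2,b)$ in $A$ with $a_1+a_2=2b$, a given such inequality is facet-defining precisely when it is not implied by the others together with the (trivial) equalities, equivalently when there is a function $h\in\RR^A$ violating only that inequality. So the argument splits into two directions.

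First I would show that if $b$ \emph{does} belong to some subset $S\subset\conv(a_1,a_2)\cap A$ satisfying property (*), then the inequality for $(a_1,a_2,b)$ is implied by the midpoint inequalities coming from triples internal to $S$, hence is not facet-defining. The idea: after subtracting an affine function we may assume $h(a_1)=h(a_2)=0$, and we want to deduce $h(b)\le 0$. Order the points of $S$ on the segment $\conv(a_1,a_2)$; property (*) says every point of $S$ strictly between $a_1$ and $a_2$ is the midpoint of two \emph{other} points of $S$. An induction ``from the outside in'' on the points of $S$ (using that $S$ is a finite collinear set and that each interior point has strictly smaller max-distance-to-endpoints witnesses than itself, or more carefully ordering by the binary-subdivision depth à la Reznick's mediated-set stabilization) shows $h(s)\le 0$ for every $s\in S$, in particular $h(b)\le 0$. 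This is exactly the collinear specialization of the cone-covering/Carathéodory argument in Lemma \ref{lem:emptysimplex} and of Reznick's analysis in \cite{Reznick89}; I would cite that the same inductive scheme goes through since all points lie on a line.

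Conversely, if $b$ lies in \emph{no} such subset $S$, I must construct $h\in\RR^A$ violating the $(a_1,a_2,b)$ inequality but satisfying all other midpoint inequalities, showing the inequality defines a genuine facet. Here I would take $S_0 := \conv(a_1,a_2)\cap A$ and iteratively remove from it any point $\ne a_1,a_2$ that fails to be a midpoint of two other surviving points (other than the pair $\{a_1,a_2\}$ when that point equals $b$); the hypothesis on $b$ guarantees this process eventually deletes $b$. Recording the reverse order in which points are deleted gives a way to assign heights: set $h(a_1)=h(a_2)=0$, set $h(b)>0$ small, and push the deleted points up to $+\infty$ in the order of deletion, large enough that the only midpoint relation that can be violated is $2b=a_1+a_2$; points of $A$ off the segment are also sent to $+\infty$. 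This is the direct analogue of the function constructed in the proof of Proposition \ref{p:conesfacets}, and the verification that no other midpoint inequality is violated is routine given the ordering.

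The main obstacle I anticipate is the first direction: carefully setting up the induction that property (*) forces $h(b)\le0$ from the ``internal'' midpoint inequalities on $S$. One has to choose the right well-founded order on the collinear set $S$ — the natural candidate is ordering by ``mediated depth'' (how many rounds of the map $X\mapsto\{a_1,a_2\}\cup\Mid(X)$ are needed to expose the point), exactly as in Reznick's proof that the mediated-set iteration stabilizes — and then argue that each inequality $h(s)\le\frac{h(s_1)+h(s_2)}{2}$ with $s_1,s_2$ of strictly smaller depth yields $h(s)\le0$ by induction. Once that order and the induction step are pinned down, the rest is bookkeeping parallel to Lemma \ref{lem:emptysimplex}. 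I would remark that this is precisely the content of the generalization of \cite{Reznick89} alluded to just before the proposition, so the write-up can be kept brief by invoking that reference for the collinear induction.
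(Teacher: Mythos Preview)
Your high-level strategy is reasonable, but the first direction (``$b$ in some $S$ with (*) $\Rightarrow$ not a facet'') has a genuine gap: the well-founded order you propose does not exist in general. Take $A=\{0,2,3,4,6\}$ with $(a_1,a_2,b)=(0,6,3)$ and $S=A$. Then $2=\tfrac{0+4}{2}$, $3=\tfrac{2+4}{2}$, $4=\tfrac{2+6}{2}$, so property (*) holds, but the dependency $2\leftrightarrow 4$ is circular: neither ``max-distance-to-endpoints'' nor ``mediated depth'' separates them (both survive every round of $X\mapsto\{a_1,a_2\}\cup\Mid(X)$). So your inductive scheme cannot get started. One can salvage the direction by a \emph{maximum argument} instead: if $h(a_1)=h(a_2)=0$ and some $h(s)>0$ on $S\setminus\{a_1,a_2\}$, look at the set of maximizers; every maximizer has witnesses in $S$, and if a witness were $a_1$ or $a_2$ you would force the other witness above the maximum, contradiction, so both witnesses are maximizers too --- but then no maximizer is extreme in the finite collinear set of maximizers, contradiction. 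You did not propose this, though, and it is not what the paper does either.

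The paper takes a dual route throughout. For ``not a facet $\Rightarrow$ $b$ lies in some $S$ with (*)'' it simply reads off $S$ from any conic combination $\sum\lambda_i(e_{s_{i,1}}+e_{s_{i,2}}-2e_{s_i})=e_{a_1}+e_{a_2}-2e_b$ and checks (*) coordinate-by-coordinate. For the converse it builds an $n\times n$ matrix $M$ encoding one chosen midpoint relation per interior point of $S$, observes that every principal submatrix of $M$ has a column (the one indexed by the point closest to $a_1$) with at most one $+1$, and invokes \cite[Lemma~4.3]{Reznick89} to conclude that $M$ is invertible with entrywise nonpositive inverse; solving $Mx=(-2,0,\ldots,0)^{\mathsf T}$ then gives the nonnegative coefficients of the redundant conic combination. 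This matrix lemma is precisely the device that handles the circular dependencies you ran into, and it is the part of Reznick's argument that actually ``generalizes directly'' here --- not the mediated-set iteration. Your second direction (constructing an $h$ violating only the given inequality) is workable, but the off-segment extension needs more care than ``send to $+\infty$''; the paper avoids this entirely by staying on the dual side.
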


We give a simple example before the proof of the proposition.

\begin{example}
For $A = \{0,1,2,3,4\}$, the inequality $\frac{1}{2}(h(0)+h(4)) \geq h(2)$ is not a facet of $\mathcal M_A$, as it is implied by the other inequalities. The same is true for $A = \{0,2,3,4,6\}$ and the inequality $\frac{1}{2}(h(0)+h(6)) \geq h(3)$.
This is reflected by the fact that, in both of these cases, the set $A$ itself satisfies condition (*) above.  
However, for $A=\{0,3,4,5,8\}$, the inequality $\frac{1}{2}(h(0)+h(8)) \geq h(4)$ is a facet of $\mathcal M_A$.
\end{example}

\begin{proof}[Proof of Proposition \ref{prop:facetsmidpoint}]
Suppose the inequality $h(a_1) + h(a_2) \geq 2h(b)$ with $a_1, a_2, b \in A$, $a_1 + a_2 = 2b$ does not describe a facet of $\mathcal M_A$. 
This inequality must then be implied by other inequalities defining $\mathcal M_A$, that is, there exist $s_{i,1}, s_{i,2}, s_i \in A$ and $\lambda_i > 0$ such that $s_{i,1} + s_{i,2} = 2s_i$ for all $i$ and
\begin{equation}\label{eq:suminequalities}
\sum \lambda_i (e_{s_{i,1}} + e_{s_{i,2}} - 2e_{s_i}) = e_{a_1} + e_{a_2} - 2 e_{b} \quad \in \mathbb R^A,
\end{equation}
with $\{s_{i,1}, s_{i,2}\} \neq \{a_1, a_2\}$. 
Take $B$ to be the set containing all points $s_{i,1}, s_{i,2}, s_i$.
Note that $B \subset \conv(a_1, a_2) \cap A$, as Equation \eqref{eq:suminequalities} implies
that $\conv(B) = \conv(a_1, a_2)$.
Furthermore, for any $s \in B$ with $s \neq a_1, a_2$, since the coefficient of $e_s$ in Equation \eqref{eq:suminequalities} is nonpositive (equal to $0$ or $-2$), $s$ must be equal to some $s_i$,
and thus $s_{i,1}, s_{i,2} \in B$ satisfy $s_{i,1} + s_{i,2} = 2s_i$ and $\{s_{i,1}, s_{i,2}\} \neq \{a_1, a_2\}$. We thus see that $b$ belongs to a set $B$ that satisfies condition (*).

Suppose now that $a_1, a_2, b \in A$, $a_1 + a_2 = 2b$, and $b$ belongs to a subset $B \subset \conv(a_1, a_2) \cap A$ satisfying condition (*). 
Fix an indexing $s_1, \dots, s_m$ of the set $B$ in such a way that $B \setminus \{a_1, a_2\} = \{s_1,\dots, s_n\}$ and $s_1 = b$. 
For each $s_i$ choose distinct $s_{j_i}, s_{k_i} \in B$ satisfying $s_{j_i} + s_{k_i} = 2s_i$ and $\{s_{j_i}, s_{k_i}\} \neq \{a_1, a_2\}$.
Consider the matrix $M \in \mathbb R^{n \times n}$ whose diagonal entries $m_{i,i}$ are all equal to $-2$, entries $m_{j_i,i}$ and $m_{k_i,i}$ are equal to $1$ whenever $s_{j_i}$ and $s_{k_i}$ are not in $\{a_1, a_2\}$, and all other entries are equal to $0$.
Note that every column of $M$ has at most two entries equal to $1$, and all principal submatrices of $M$ must have a column with at most one entry equal to $1$; for instance, the column in the submatrix corresponding to the point $s_{i}$ that is closest to $a_1$.
By \cite[{Lemma 4.3}]{Reznick89} applied to the matrix $-M^{T}$, we see that the matrix $M$ is invertible and all the entries of $M^{-1}$ are nonpositive.
This implies that the linear system $M \cdot x = (-2, 0, \dots, 0)^{T}$ has a solution
$x \in \mathbb R^n$ with all entries being nonnegative.
This in turn means that we can write
\begin{equation}\label{eq:suminequalities2}
c_1 e_{a_1} + c_ 2 e_{a_2} - 2 e_{b} = \sum x_i (e_{s_{j_i}} + e_{s_{k_i}} - 2e_{s_i}) \quad \in \mathbb R^A,
\end{equation}
with all the $x_i$ being nonnegative.
Now, for any $u \in \mathbb R^n$, consider the vector $\hat u := \sum_{a\in A} (u \cdot a) e_a \in \mathbb R^A$. 
All the summands on the right hand side of Equation \eqref{eq:suminequalities2} are orthogonal to $\hat u$, and thus the same must be true for the left hand side, i.e., $u \cdot (c_1 a_1 + c_2 a_2 - 2b) = 0$. 
As this must be the case for any $u \in \mathbb R^n$, in fact we must have $c_1 a_1 + c_2 a_2 - 2b = 0$, and so $c_1 = c_2 = 1$.
Equation \eqref{eq:suminequalities2} thus shows that the inequality $h(a_1) + h(a_2) \geq 2h(b)$ is implied by other inequalities defining $\mathcal M_A$, and so it cannot define a facet of $\mathcal M_A$. 
\end{proof}

If $P \subset \RR^m$ is a convex cone, its dual cone is
\[P^{\vee} := \{ x \in \RR^m : x \cdot v \geq 0 \text{ for all } v \in P\}.\]
As we will see in Sections \ref{sec:tropmom} and \ref{sec:tsos}, the cones of convex and midpoint convex functions defined above are equal to the tropicalizations of the cones of $A$-moments and $A$-pseudo moments of measures supported on the entire nonnegative orthant $\R^n_{\geq 0}$. If we consider only measures supported on a semialgebraic subset $S$ of $\mathbb{R}_{\geq0}^n$ with the Hadamard property, this leads to a more general notion of convexity and midpoint convexity with respect to a polyhedral cone $C$. In this particular context, the cone $C$ is equal to $\trop (S)^\vee$, the dual cone to the tropicalization of $S$; when $S=\mathbb{R}_{\geq 0}^n$ we get $\trop (S)^\vee=\{0\}$.
Below we define these more general notions of convexity and study some of their combinatorial properties.

\begin{definition}\label{def:generalizedcones}
Suppose $A \subset \ZZ^n$ is finite and $C \subset \RR^n$ is a polyhedral cone. 
Define  
$$\textstyle \mathcal K_{A,C} := \{h : A \rightarrow \RR : \sum \lambda_i h(a_i) \geq h(b) \text{ whenever } b,a_i \in A, \lambda_i \geq 0, \sum \lambda_i = 1,\text{ and } \sum \lambda_i a_i - b \in C\}$$
and 
\begin{multline*}
\mathcal M_{A,C} := \{h : A \rightarrow \RR : h(a_1) + h(a_2) \geq 2 h(b) \text{ whenever } b,a_i \in A \text{ satisfy } a_1 + a_2 = 2b, \\ 
\text{and } h(a_1) \geq h(a_2) \text{ whenever } a_i \in A \text{ satisfy } a_1 - a_2 \in C\}.
\end{multline*}
We have $\mathcal K_{A,C} \subseteq \mathcal M_{A,C}$. 
Note that these cones generalize the cones of convex and midpoint-convex functions on $A$; 
indeed, we have $\mathcal K_A = \mathcal K_{A,\{0\}}$ and $\mathcal M_A = \mathcal M_{A,\{0\}}$. 
\end{definition}

It follows from the definition in~\eqref{eqn:tcone} that the tropical conical hull of  a subset $Y \subset \RR^A$ is 
\begin{multline*} 
\tcone(Y) := \{ h \in \RR^A : \exists h_1, \dots, h_r \in Y \text{ and } c_1, \dots, c_r \in \RR \text{ such that } \\ 
h(a) = \max (h_1(a) + c_1, \dots, h_r(a)+c_r) \text{ for all } a \in A\}.
\end{multline*}

In the following, we also use $A$ for the matrix whose rows are the elements of the set $A$ (in some order). 
\begin{theorem}\label{thm:K_{A,C}AsMax}
The cone $\mathcal K_{A,C}$ equals the tropical conical hull of $A (C^{\vee}) = \{A u\colon u\in C^\vee\}$. 
That is, a function $h:A \to \RR$ belongs to $\mathcal K_{A,C}$ if and only if there 
exist $u_1, \hdots, u_r\in C^{\vee}$ and $c_1, \hdots, c_r\in \R$ for which 
\begin{equation}\label{eq:maxLinear}
h(a) = \max\{ \langle u_1, a \rangle +c_1, \hdots, \langle u_r, a \rangle +c_r\}.
\end{equation}
\end{theorem}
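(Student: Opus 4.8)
The plan is to prove the two inclusions separately. For the easier direction, I would start with a function $h$ of the form \eqref{eq:maxLinear} with $u_1,\dots,u_r \in C^\vee$, and verify it lies in $\mathcal K_{A,C}$ directly. Given $a_1,\dots,a_s, b\in A$ and weights $\lambda_i\geq 0$ summing to $1$ with $\sum_i \lambda_i a_i - b \in C$, pick $j$ achieving the maximum defining $h(b)$, so $h(b) = \langle u_j, b\rangle + c_j$. Since $u_j \in C^\vee$ and $\sum_i \lambda_i a_i - b \in C$, we get $\langle u_j, \sum_i \lambda_i a_i - b\rangle \geq 0$, hence $\langle u_j, b\rangle \leq \sum_i \lambda_i \langle u_j, a_i\rangle$. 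Combining with $h(a_i) \geq \langle u_j, a_i\rangle + c_j$ for each $i$ and $\sum_i \lambda_i = 1$ gives $\sum_i \lambda_i h(a_i) \geq \sum_i \lambda_i \langle u_j, a_i\rangle + c_j \geq \langle u_j, b\rangle + c_j = h(b)$, as required. I would also note that $\tcone$ of the set $A(C^\vee)$ consists precisely of functions of this max-of-affine-linear form: a tropical linear combination $\bigoplus_k (c_k \odot (Au_k))$ evaluated at coordinate $a$ is exactly $\max_k(\langle u_k,a\rangle + c_k)$, using that the $a$-th entry of $Au_k$ is $\langle u_k, a\rangle$ when we view $A$ as the matrix whose rows are the points of $A$.

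For the reverse inclusion, the goal is to show every $h \in \mathcal K_{A,C}$ has the form \eqref{eq:maxLinear}. I would argue by duality/separation, or more concretely as follows: given $h \in \mathcal K_{A,C}$, for each $b \in A$ I want to produce an affine function $\ell_b(x) = \langle u_b, x\rangle + c_b$ with $u_b \in C^\vee$, $\ell_b(b) = h(b)$, and $\ell_b(a) \leq h(a)$ for all $a \in A$; then $h = \max_{b\in A} \ell_b$ and we are done. Existence of such a \emph{supporting} affine functional is a convexity statement: it says the point $(b, h(b)) \in \R^n\times\R$ is not separated from the ``epigraph-type'' set by the constraint that the linear part lie in $C^\vee$. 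The natural way to see this is via LP duality applied to: minimize $\langle u, b\rangle + c$ over $u \in C^\vee$, $c \in \R$ subject to $\langle u, a\rangle + c \leq h(a)$ for all $a \in A$ — I claim the optimal value is exactly $h(b)$. That the value is $\leq h(b)$ and $\geq$ something is the content; by LP strong duality the dual program asks for $\lambda_a \geq 0$ with $\sum_a \lambda_a = 1$, $\sum_a \lambda_a a - b \in (C^\vee)^\vee = C$ (using that $C$ is a closed polyhedral cone, so $(C^\vee)^\vee = C$), maximizing $\sum_a \lambda_a h(a)$ minus the primal objective — and the defining inequalities of $\mathcal K_{A,C}$ say precisely $\sum_a \lambda_a h(a) \geq h(b)$ for all such $\lambda$, forcing the optimal value to be $h(b)$, attained.

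The main obstacle I anticipate is making the LP duality argument fully rigorous, in particular handling the cone $C^\vee$ (which need not be pointed or full-dimensional) as a feasible region: one must set up the primal as a genuine linear program over the polyhedral cone $C^\vee$ (using a finite generating set of $C^\vee$, or equivalently a finite inequality description of $C$), check feasibility and boundedness so that strong duality applies without a duality gap, and correctly identify the dual to recover exactly the inequalities defining $\mathcal K_{A,C}$. A secondary technical point is confirming that $(C^\vee)^\vee = C$, which holds because $C$ is a closed polyhedral (hence closed convex) cone — this is where the polyhedrality hypothesis on $C$ in Definition \ref{def:generalizedcones} is used. Once the supporting functional at each $b \in A$ is in hand, assembling $h = \max_{b\in A}\ell_b$ and invoking the first paragraph's identification of $\tcone(A(C^\vee))$ with max-of-affine-linear functions completes the proof.
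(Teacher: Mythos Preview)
Your first inclusion $\tcone(A(C^\vee)) \subseteq \mathcal{K}_{A,C}$ is argued exactly as in the paper.

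For the reverse inclusion, your approach differs genuinely from the paper's. The paper does not construct supporting affine functionals at each $b\in A$; instead it invokes \Cref{prop:tconvdual}, which describes the dual of $\tcone(Y)$ for any convex cone $Y$: every extreme ray of $\tcone(Y)^\vee$ lies in $Y^\vee$, has coordinate sum zero, and has exactly one negative coordinate. Taking $Y = A(C^\vee)$ and computing $Y^\vee$ as the preimage of $C$ under the dual of the linear map $A$, one sees that such an extreme ray (scaled so the negative entry is $-1$ at position $b$) is exactly a vector $\sum_{a\neq b}\lambda_a e_a - e_b$ with $\lambda_a\geq 0$, $\sum_a\lambda_a = 1$, and $\sum_{a}\lambda_a a - b \in C$, hence a generator of $\mathcal{K}_{A,C}^\vee$. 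Thus the paper proves $\tcone(Y)^\vee \subseteq \mathcal{K}_{A,C}^\vee$ via the Develin--Sturmfels structure lemma behind \Cref{prop:tconvdual}, while you build $\mathcal{K}_{A,C}\subseteq \tcone(Y)$ by an LP duality argument pointwise in $b$. Your route is more self-contained; the paper's route makes the extreme-ray structure of $\mathcal{K}_{A,C}^\vee$ explicit along the way.

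One correction is needed in your LP. As written, \emph{minimizing} $\langle u,b\rangle + c$ subject to $\langle u,a\rangle + c \leq h(a)$ is unbounded below (send $c\to -\infty$). The correct primal is a maximization: maximize $\langle u,b\rangle + c$ over $u\in C^\vee$, $c\in\R$, subject to $\langle u,a\rangle + c \leq h(a)$ for all $a\in A$. This is feasible ($u=0$, $c=\min_a h(a)$) and bounded above by $h(b)$ (from the constraint at $a=b$). Its LP dual is then a \emph{minimization}: minimize $\sum_a \lambda_a h(a)$ over $\lambda_a\geq 0$, $\sum_a\lambda_a = 1$, $\sum_a \lambda_a a - b \in (C^\vee)^\vee = C$. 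The dual is feasible ($\lambda_b=1$) with value $h(b)$, and the defining inequalities of $\mathcal{K}_{A,C}$ force the dual optimum to be $\geq h(b)$; strong LP duality (both programs polyhedral and feasible) gives equality and primal attainment, producing the supporting $\ell_b$. With this fix your argument goes through.
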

\begin{proof}
Let $Y = A (C^{\vee})$. 
This is a polyhedral cone consisting of all functions of the form $h:A\to \R$ of the form $h(a)= \langle u, a\rangle$ for 
some $u$ in $C^{\vee}$. 
By definition, the tropical conical hull of $Y$ consists of functions of the form \eqref{eq:maxLinear}. 
First, we show that $\tcone(Y) \subseteq \mathcal{K}_{A,C}$.
Suppose $h\in \tcone(Y)$ and consider the extension $\hat{h}:\R^n \to \R $ obtained as the maximum of 
affine linear functions $\ell_j(x) = \langle u_j, x \rangle +c_j$.
Since $u_j\in C^{\vee}$, we have $\ell_j(x) \geq  \ell_j(y) $ whenever $x-y\in C$. 
It follows that $\hat{h}$ is convex and $\hat{h}(x) \geq  \hat{h}(y) $ whenever $x-y\in C$. 
In particular, if $\sum_i \lambda_i a_i - b \in C$, then 
\[
\sum_i \lambda_i  h(a_i) = \sum_i \lambda_i  \hat{h}(a_i)  \geq  \hat{h}\left(\sum_i \lambda_i a_i\right) \geq \hat{h}(b) = h(b).
\]
Therefore $\tcone(Y) \subseteq \mathcal{K}_{A,C}$.

For the reverse inclusion we show that the elements of $\mathcal{K}_{A,C}$ satisfy all the inequalities 
given by the dual cone $\tcone(Y)^\vee$. 
 The cone $Y$ is the image of $C^{\vee}$ under the linear map $A$. 
 By general convexity, the cone $Y^\vee$ is the intersection of $C$ with the image of the dual map of $A$. 
  Explicitly, for each $a\in A$ and $h\in \R^A$, define $\delta_a(h) = h(a)$.  
The dual map of $A$ maps a point $\sum_{a\in A}  \lambda_a\delta_a$ to $\sum_{a\in A}  \lambda_a a \in \R^n$. 

Suppose $\lambda = \sum_{a\in A}  \lambda_a\delta_a$ is an extreme ray of $\tcone(Y)^\vee$. 
By \Cref{prop:tconvdual}, this extreme ray only has one negative entry, say $\lambda_b$. We can rescale to make this entry $-1$. 
Then $\sum_{a\neq b} \lambda_a = 1$. This extreme ray 
imposes the condition $\sum_{a\neq b} \lambda_a h(a) \geq h(b)$ for elements $h:A\to \R$ of $\tcone(Y)$. 
Moreover, since $\lambda$ belongs to $\tcone(Y)^\vee$, its image $\sum_{a\in A}  \lambda_a a  = \sum_{a\neq b} \lambda_a a -b$
must belong to $C$.  By construction, $\lambda$ belongs to $\mathcal{K}_{A,C}^\vee$.  
\end{proof}

\begin{corollary}
The cone $\mathcal K_{A,C}$ consists of all functions $h:A \to \RR$ that can be extended to a 
convex function $\hat h : \RR^n \to \RR$ satisfying $\hat h(x_1) \geq \hat h(x_2)$ whenever $x_1 - x_2 \in C$. In particular, for any finite subset $E \subset \ZZ^n$ with $A \subset E$ we have
$$\mathcal K_{A,C} = \pi_{A}(\mathcal K_{E,C}) \subseteq \pi_{A}(\mathcal M_{E,C}).$$
\end{corollary}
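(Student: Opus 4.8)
The plan is to read off both assertions from \Cref{thm:K_{A,C}AsMax}. For the characterization of $\mathcal{K}_{A,C}$ I would prove the two implications separately. Given $h \in \mathcal{K}_{A,C}$, \Cref{thm:K_{A,C}AsMax} provides $u_1,\dots,u_r \in C^{\vee}$ and $c_1,\dots,c_r \in \R$ with $h(a) = \max_j(\langle u_j, a\rangle + c_j)$ for $a \in A$; I simply set $\hat h(x) := \max_j(\langle u_j, x\rangle + c_j)$ for all $x \in \R^n$. This $\hat h$ is convex as a maximum of affine-linear functions, it restricts to $h$ on $A$, and each summand is nondecreasing in $C$-directions, since $u_j \in C^{\vee}$ gives $\langle u_j, x_1 - x_2\rangle \geq 0$ whenever $x_1 - x_2 \in C$; a maximum of such functions is again nondecreasing in $C$-directions, so $\hat h(x_1) \geq \hat h(x_2)$ whenever $x_1 - x_2 \in C$.

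Conversely, suppose $h$ has any convex extension $\hat h : \R^n \to \R$ with $\hat h(x_1) \geq \hat h(x_2)$ whenever $x_1 - x_2 \in C$. I would verify the defining inequalities of $\mathcal{K}_{A,C}$ directly: if $a_i, b \in A$, $\lambda_i \geq 0$, $\sum_i \lambda_i = 1$, and $\sum_i \lambda_i a_i - b \in C$, then
\[
\sum_i \lambda_i h(a_i) = \sum_i \lambda_i \hat h(a_i) \;\geq\; \hat h\Bigl(\sum_i \lambda_i a_i\Bigr) \;\geq\; \hat h(b) = h(b),
\]
where the first inequality is convexity (Jensen) and the second is the monotonicity hypothesis applied to $\sum_i \lambda_i a_i - b \in C$. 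Hence $h \in \mathcal{K}_{A,C}$. The point worth noting is that this direction uses nothing about $\hat h$ beyond convexity and $C$-monotonicity, in particular not that it is a finite maximum of affine functions, which is exactly what makes the projection statement immediate.

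For the ``in particular'', I would show $\mathcal{K}_{A,C} = \pi_{A}(\mathcal{K}_{E,C})$ by two inclusions. The inclusion $\pi_{A}(\mathcal{K}_{E,C}) \subseteq \mathcal{K}_{A,C}$ holds because every inequality defining $\mathcal{K}_{A,C}$ is indexed by data $a_i, b \in A \subset E$ with $\sum_i \lambda_i a_i - b \in C$, hence is also a defining inequality of $\mathcal{K}_{E,C}$; restricting any $g \in \mathcal{K}_{E,C}$ to $A$ therefore lands in $\mathcal{K}_{A,C}$. For the reverse inclusion, given $h \in \mathcal{K}_{A,C}$ I use the characterization just proved to obtain a convex $C$-monotone extension $\hat h : \R^n \to \R$; then the restriction $\hat h|_E$ is a function on the finite set $E$ that extends to the convex $C$-monotone function $\hat h$, so by the characterization applied with $E$ in place of $A$ it lies in $\mathcal{K}_{E,C}$, and $\pi_A(\hat h|_E) = h$. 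Finally, $\pi_{A}(\mathcal{K}_{E,C}) \subseteq \pi_{A}(\mathcal{M}_{E,C})$ follows from the containment $\mathcal{K}_{E,C} \subseteq \mathcal{M}_{E,C}$ recorded in \Cref{def:generalizedcones}.

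I do not expect a genuine obstacle: everything reduces to \Cref{thm:K_{A,C}AsMax} and elementary convexity. The only point deserving care is that the converse implication of the characterization is proved for an \emph{arbitrary} convex $C$-monotone extension rather than only for the explicit piecewise-linear one coming from \Cref{thm:K_{A,C}AsMax}, so that a single extension $\hat h$ can be reused verbatim to witness membership in $\mathcal{K}_{E,C}$ for every finite $E \supseteq A$.
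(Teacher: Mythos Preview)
Your proposal is correct and follows essentially the same approach as the paper. The paper's proof is very terse---it just says to check that the function $\hat{h}(x) = \max\{ \langle u_1, x \rangle +c_1, \hdots, \langle u_r, x \rangle +c_r\}$ from \Cref{thm:K_{A,C}AsMax} is such an extension---while you spell out both directions and the ``in particular'' explicitly; your converse argument is the same Jensen-plus-monotonicity computation that already appears inside the proof of \Cref{thm:K_{A,C}AsMax}.
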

\begin{proof} If $h \in \mathcal K_{A,C}$, one can check that the function 
$\hat{h}(x) = \max\{ \langle u_1, x \rangle +c_1, \hdots, \langle u_r, x \rangle +c_r\}$
given by \Cref{thm:K_{A,C}AsMax} is an extension of $h$ to $E$ that lies in $\mathcal M_{E,C}$. 
\end{proof}

The description by inequalities of $\mathcal K_{A,C}$ implies that the dual cone $\mathcal K_{A,C}^\vee$ is the conical hull of the vectors of the form $\sum \lambda_i e_{a_i} - e_b \in \RR^A$ with $b, a_i \in A$, $\lambda_i \geq 0$, $\sum \lambda_i=1$, and $\sum \lambda_i a_i - b \in C$. 
Similarly, the dual cone $\mathcal M_{A,C}^\vee$ is the conical hull of the vectors $e_{a_1} + e_{a_2} - 2 e_{b} \in \mathbb R^A$ with $b, a_i \in A$ and $a_1 + a_2 = 2b$ together with the
vectors $e_{a_1} - e_{a_2} \in \mathbb R^A$ with $a_i \in A$ and $a_1 - a_2 \in C$. 

\begin{proposition}\label{prop:face}
Suppose $F$ is a face of the polyhedral cone $C \subset \RR^n$. Then $\mathcal K_{A,F}^\vee$ is a face of $\mathcal K_{A,C}^\vee$, and similarly, $\mathcal M_{A,F}^\vee$ is a face of $\mathcal M_{A,C}^\vee$.
\end{proposition}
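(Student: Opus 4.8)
The plan is to exhibit, for each of the two cones, a single linear functional on $\RR^A$ that is nonnegative on the larger dual cone and whose zero set meets it exactly in the smaller dual cone; this will identify the smaller one as a (necessarily exposed) face. The key idea is to take the functional that \emph{exposes} $F$ inside $C$ and pull it back to $\RR^A$ along the matrix $A$.

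First I would recall the explicit generating sets stated just before the proposition: $\mathcal K_{A,C}^\vee$ is the conical hull of the vectors $g = \sum_i \lambda_i e_{a_i} - e_b$ with $b, a_i \in A$, $\lambda_i \ge 0$, $\sum_i \lambda_i = 1$, and $\sum_i \lambda_i a_i - b \in C$, while $\mathcal M_{A,C}^\vee$ is the conical hull of the ``midpoint'' vectors $e_{a_1} + e_{a_2} - 2e_b$ with $a_1 + a_2 = 2b$ together with the ``monotonicity'' vectors $e_{a_1} - e_{a_2}$ with $a_1 - a_2 \in C$. Since $F \subseteq C$, in both cases the generators of the $F$-cone form a subset of the generators of the $C$-cone, so $\mathcal K_{A,F}^\vee \subseteq \mathcal K_{A,C}^\vee$ and $\mathcal M_{A,F}^\vee \subseteq \mathcal M_{A,C}^\vee$.

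Next, because $F$ is a face of the polyhedral cone $C$, it is exposed: there is $\ell \in C^\vee$ with $F = C \cap \ell^\perp$. Regarding $A$ as the matrix with rows $a \in A$, I would pull $\ell$ back to the linear functional $L := A\ell \in \RR^A$, i.e.\ $L(v) = \langle \ell, \sum_{a\in A} v_a a\rangle$; equivalently $L$ is the vector $(\ell(a))_{a \in A}$. For a generator $g = \sum_i \lambda_i e_{a_i} - e_b$ of $\mathcal K_{A,C}^\vee$ one then has $L(g) = \ell(\sum_i \lambda_i a_i - b)$, which is $\ge 0$ because $\sum_i \lambda_i a_i - b \in C$ and $\ell \in C^\vee$, and equals $0$ exactly when $\sum_i \lambda_i a_i - b \in C \cap \ell^\perp = F$, i.e.\ exactly when $g$ is a generator of $\mathcal K_{A,F}^\vee$. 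Similarly $L$ annihilates every midpoint vector (since $\ell(a_1 + a_2 - 2b) = \ell(0) = 0$), and on a monotonicity vector $e_{a_1} - e_{a_2}$ it takes the value $\ell(a_1 - a_2) \ge 0$, which vanishes precisely when $a_1 - a_2 \in F$.

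Finally I would conclude. Since $L \ge 0$ on all generators, $L \ge 0$ on all of $\mathcal K_{A,C}^\vee$ (resp.\ $\mathcal M_{A,C}^\vee$), so $L^\perp$ is a supporting hyperplane and $\mathcal K_{A,C}^\vee \cap L^\perp$ is a face. Writing any $v$ in the cone as $\sum_j \mu_j g_j$ with $\mu_j \ge 0$ and $g_j$ generators, and using $L(g_j) \ge 0$, the equality $L(v) = 0$ forces $\mu_j = 0$ whenever $L(g_j) > 0$; hence this face is the conical hull of exactly those generators on which $L$ vanishes, which by the previous paragraph is precisely the generating set of $\mathcal K_{A,F}^\vee$ (resp.\ of $\mathcal M_{A,F}^\vee$). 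Therefore $\mathcal K_{A,F}^\vee = \mathcal K_{A,C}^\vee \cap L^\perp$ is a face, and likewise $\mathcal M_{A,F}^\vee = \mathcal M_{A,C}^\vee \cap L^\perp$. The only point needing (standard) care is the exposedness of faces of a polyhedral cone, used to produce $\ell$; everything else is a direct computation with the explicit generators, so I do not expect a genuine obstacle here.
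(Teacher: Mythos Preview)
Your proof is correct and takes essentially the same approach as the paper: choose a linear functional $u$ (your $\ell$) exposing $F$ in $C$, pull it back to $\hat u = (u\cdot a)_{a\in A}\in\RR^A$ (your $L=A\ell$), and verify on the explicit generators that $\hat u$ is nonnegative on $\mathcal K_{A,C}^\vee$ (resp.\ $\mathcal M_{A,C}^\vee$) and vanishes exactly on the generators of the $F$-version. Your write-up is in fact slightly more explicit than the paper's in justifying why the face cut out by $L^\perp$ coincides with the conical hull of the vanishing generators.
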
 
\begin{proof}
Since $F$ is a face of $C$, there exists $u \in \mathbb R^n$ such that $u \cdot c = 0$ for all $c \in F$ and $u \cdot c > 0$ for all $c \in C \setminus F$. 
Consider the vector $\hat u := \sum_{a\in A} (u \cdot a) e_a \in \mathbb R^A$.
If $x = \sum \lambda_i e_{a_i} - e_b \in \RR^A$ is one of the generators of $\mathcal K_{A,C}^\vee$ described above, the dot product $\hat u \cdot x$ is equal to $\sum \lambda_i (u \cdot a_i) - (u \cdot b) = u \cdot (\sum \lambda_i a_i - b)$. This number is nonnegative for all the generators of $\mathcal K_{A,C}^\vee$, and it is equal to zero precisely when $\sum \lambda_i a_i - b \in F$. It follows that $\mathcal K_{A,F}^\vee$ is the face of $\mathcal K_{A,C}^\vee$ minimizing the dot product with $\hat u$. 

A similar argument shows that $\mathcal M_{A,F}^\vee$ is the face of $\mathcal M_{A,C}^\vee$ minimizing the dot product with $\hat u$. Indeed, if $x = e_{a_1} + e_{a_2} - 2 e_{b} \in \RR^A$ is one of the generators of $\mathcal M_{A,C}^\vee$ then $\hat u \cdot x = 0$, and if $x = e_{a_1} - e_{a_2} \in \mathbb R^A$ with $a_1 - a_2 \in C$ then $\hat u \cdot x = 0$ if and only if $a_1-a_2 \in F$.
\end{proof}

\begin{proposition}\label{prop:decreasingimplication}
Let $A \subset E \subset \ZZ^n$ be finite subsets and $C \subset \RR^n$ a pointed polyhedral cone. If $\mathcal K_{A,C} = \pi_{A}(\mathcal M_{E,C})$ then $\mathcal K_A = \pi_{A}(\mathcal M_E)$.
\end{proposition}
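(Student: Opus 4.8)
The plan is to dualize everything and compare the generators of the dual cones, using that $C$ is pointed to produce a strictly positive linear functional on $C$.

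First I would note that one inclusion is free: taking $C = \{0\}$ in the corollary following \Cref{thm:K_{A,C}AsMax} gives $\mathcal K_A = \mathcal K_{A,\{0\}} \subseteq \pi_A(\mathcal M_{E,\{0\}}) = \pi_A(\mathcal M_E)$. So it remains to prove $\pi_A(\mathcal M_E) \subseteq \mathcal K_A$, and since all cones involved are polyhedral (hence closed), it is equivalent to show the dual containment $\mathcal K_A^\vee \subseteq \pi_A(\mathcal M_E)^\vee$. Under the coordinate inclusion $\RR^A \hookrightarrow \RR^E$ one has $\pi_A(\mathcal M_E)^\vee = \mathcal M_E^\vee \cap \RR^A$, and $\mathcal K_A^\vee$ already lies in $\RR^A$, so the goal reduces to: every generator of $\mathcal K_A^\vee$ belongs to $\mathcal M_E^\vee$.

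By the explicit descriptions of dual cones recorded just before \Cref{prop:face}, a generator of $\mathcal K_A^\vee$ has the form $g = \sum_i \lambda_i e_{a_i} - e_b$ with $a_i, b \in A$, $\lambda_i \geq 0$, $\sum_i \lambda_i = 1$, and $\sum_i \lambda_i a_i = b$; in particular $\sum_i \lambda_i a_i - b = 0 \in C$, so $g$ is also a generator of $\mathcal K_{A,C}^\vee$. Dualizing the hypothesis $\mathcal K_{A,C} = \pi_A(\mathcal M_{E,C})$ yields $\mathcal K_{A,C}^\vee = \mathcal M_{E,C}^\vee \cap \RR^A$, so $g \in \mathcal M_{E,C}^\vee$, and I can therefore write $g$ as a nonnegative combination of the generators of $\mathcal M_{E,C}^\vee$, namely the \emph{midpoint} vectors $e_p + e_q - 2 e_r$ with $p + q = 2r$ and the \emph{monotone} vectors $e_s - e_t$ with $s - t \in C$ (all indices in $E$). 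Now I would invoke pointedness of $C$: pick $u \in \RR^n$ with $u \cdot c > 0$ for every $c \in C \setminus \{0\}$ and set $\hat u := \sum_{e \in E}(u \cdot e)\, e_e \in \RR^E$. Pairing $\hat u$ against a midpoint vector gives $u \cdot (p + q - 2r) = 0$, and against a monotone vector gives $u \cdot (s - t) \geq 0$, with equality exactly when $s = t$, i.e. when that vector vanishes. On the other hand $g$ is supported on $A$, so $\langle \hat u, g \rangle = u \cdot (\sum_i \lambda_i a_i - b) = 0$. Hence in the chosen expression for $g$ every monotone generator appearing with positive coefficient must pair to zero with $\hat u$ and so must be the zero vector; thus $g$ is a nonnegative combination of midpoint generators $e_p + e_q - 2 e_r$ with $p, q, r \in E$, i.e. $g \in \mathcal M_E^\vee$. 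This gives $\mathcal K_A^\vee \subseteq \mathcal M_E^\vee \cap \RR^A$ and completes the proof.

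The step I expect to be the only real subtlety is the duality bookkeeping: that $\pi_A(\mathcal M_E)^\vee = \mathcal M_E^\vee \cap \RR^A$ under the coordinate inclusion, that polyhedrality justifies replacing the desired inclusion of cones by the inclusion of their duals, and that the generator sets of the various dual cones are exactly as recorded before \Cref{prop:face}. Once these identifications are in place the remaining content is the single pairing computation above, with pointedness of $C$ entering only to supply the functional $u$. Equivalently, one could phrase the argument through \Cref{prop:face}: since $C$ is pointed, $\{0\}$ is a face of $C$, so $\mathcal K_A^\vee = \mathcal K_{A,\{0\}}^\vee$ and $\mathcal M_E^\vee = \mathcal M_{E,\{0\}}^\vee$ are precisely the faces of $\mathcal K_{A,C}^\vee$ and $\mathcal M_{E,C}^\vee$ on which $\hat u$ vanishes, and the equality $\mathcal K_{A,C}^\vee = \mathcal M_{E,C}^\vee \cap \RR^A$ restricts to the same face on both sides.
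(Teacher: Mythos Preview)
Your proof is correct and follows essentially the same approach as the paper: dualize the hypothesis to $\mathcal K_{A,C}^\vee = \mathcal M_{E,C}^\vee \cap \RR^A$, use pointedness of $C$ to pick $u$ strictly positive on $C\setminus\{0\}$, and then use the functional $\hat u$ to cut down to the $C=\{0\}$ case. The only cosmetic difference is that the paper invokes \Cref{prop:face} directly to identify $\mathcal K_A^\vee$ and $\mathcal M_E^\vee$ as the faces of $\mathcal K_{A,C}^\vee$ and $\mathcal M_{E,C}^\vee$ where $\hat u$ vanishes (obtaining the full equality $\mathcal K_A^\vee = \mathcal M_E^\vee \cap \RR^A$ in one stroke), whereas you unfold this into an explicit generator computation---but you yourself note this equivalence in your final paragraph.
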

A combinatorial characterization of the sets of lattice points $A$ for which $\mathcal K_A = \pi_{A}(\mathcal M_E)$ with $E \supseteq \conv(A) \cap \mathbb Z^n$ was given in Corollary \ref{cor:projequality}.
\begin{proof}
Suppose $\mathcal K_{A,C} = \pi_{A}(\mathcal M_{E,C})$. Taking dual cones we obtain 
$$\mathcal K_{A,C}^\vee = \mathcal M_{E,C}^\vee \cap \mathbb R^A,$$
where $\mathbb R^A \subset \mathbb R^E$ denotes the coordinate subspace where $x_i =0$ for all $i \notin A$.
Since $C \subset \mathbb R^n$ is a pointed polyhedral cone, there exists $u \in \RR^n$ such that $u \cdot c > 0$ for all $c \in C \setminus \{0\}$. Let $\hat u := \sum_{a\in E} (u \cdot a) e_a \in \mathbb R^E$, and denote by $\pi(\hat u)$ its projection to $\RR^A$.
As explained in the proof of Proposition \ref{prop:face}, the face of $\mathcal K_{A,C}^\vee$ consisting of all vectors orthogonal to $\pi(\hat u)$ is equal to $\mathcal K_{A}^\vee$, and the face of $\mathcal M_{E,C}^\vee$ consisting of all vectors orthogonal to $\hat u$ is equal to $\mathcal M_{E}^\vee$.
Since a vector in $\mathbb R^A$ is orthogonal to $\pi(\hat u)$ if and only if it is orthogonal to $\hat u$, it follows that $\mathcal K_{A}^\vee = \mathcal M_{E}^\vee \cap \mathbb R^A$. 
Taking dual cones, we get $\mathcal K_{A} = \pi_{A}(\mathcal M_{E})$, as desired. 
\end{proof}

Proposition \ref{prop:decreasingimplication} shows that if the containment $\mathcal K_A \subset\pi_{A}(\mathcal M_E)$ is strict then for any fixed pointed cone $C \subset \R^n$ we also have $\mathcal K_{A,C} \subsetneq \pi_{A}(\mathcal M_{E,C})$.
Using the connection to moments and pseudo-moments described in Sections \ref{sec:tropmom} and \ref{sec:tsos}, this shows that even if we increase degree bounds for sums of squares, there will exist valid binomial inequalities in moments that are not satisfied by pseudo-moments.

\section{Tropicalizing the moment cone}\label{sec:tropmom}
\label{sec:moment}

%Let $S$ be a semialgebraic subset of the nonnegative orthant $\RR_{\geq 0}^{n}$ with the property that $S\subset \overline{S\cap \R^n_{>0}}$, that is to say that the points in the intersection of $S$ and the positive orthant are dense in $S$. In this case, the tropicalization of $S$ is the tropicalization of $S\cap \R^n_{>0}$. 

Let $A \subset \ZZ^n_{\geq 0}$ be a finite subset. By a slight abuse of notation we will also denote by $A$ the matrix whose rows are the integer vectors of the point configuration $A$.   Let $\varphi_A: \mathbb{R}^n\rightarrow \mathbb{R}^{A}$, $x \mapsto (x^a \colon a\in A)$, be the corresponding monomial map.  

\begin{lemma}\label{lem:tropMA}
\label{lem:nonnegative}
If $S$ is a semialgebraic set in the nonnegative orthant $\RR_{\geq 0}^n$ with $S\subset \overline{S\cap \R^n_{>0}}$, then the tropicalization of its moment cone $M_A(S)$ is
\[
\trop(M_A(S)) = \tcone A (\trop(S)).
\]
\end{lemma}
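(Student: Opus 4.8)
The plan is to peel off the three operations that build $M_A(S)$ from $S$ --- the monomial map $\varphi_A$, the conical hull, and the Euclidean closure --- and tropicalize each in turn.

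First I would record that, since $A$ is finite, $M_A(S) = \overline{\cone(\varphi_A(S))}$, and that $\varphi_A(S)$ and $\cone(\varphi_A(S))$ are semialgebraic by Tarski--Seidenberg. The density hypothesis $S\subset\overline{S\cap\R^n_{>0}}$ is first-order in the language of ordered rings, so it transfers to $\cR$; combined with the Lemma~6.4 argument of \cite{JSY} (that $\trop$ is unchanged under passing to the closure), this gives $\trop(M_A(S)) = \trop(\cone(\varphi_A(S)))$. Because $A\subseteq\Z^n_{\geq0}$ and $S\subseteq\R^n_{\geq0}$ force $\varphi_A(S)\subseteq\R^{|A|}_{\geq0}$, \Cref{prop:TropConvCommuting} applies and yields $\trop(\cone(\varphi_A(S))) = \tcone(\trop(\varphi_A(S)))$. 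So everything reduces to the identity
\[
\trop(\varphi_A(S)) = A(\trop(S)).
\]

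This identity is the crux, and the point is that $\varphi_A$ intertwines coordinatewise valuation with the linear map $A$: for $x\in\cR^n_{>0}$ one has $-\val(\varphi_A(x)_a) = -\sum_i a_i\,\val(x_i) = (A\cdot\trop(x))_a$, hence $\trop(\varphi_A(x)) = A\,\trop(x)$. Transferring the existential formula defining the image gives $\varphi_A(S)_\cR = \varphi_A(S_\cR)$. Using the density hypothesis together with continuity of $\varphi_A$ and the closure-insensitivity of $\trop$, one may replace $S$ by $S\cap\R^n_{>0}$ both when computing $\trop(S)$ (as already noted at the start of this section) and when computing $\trop(\varphi_A(S))$. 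On the positive part the displayed identity turns $\trop(\varphi_A(S))$ into $\overline{A(X)}$ and $\trop(S)$ into $\overline{X}$, where $X = \{\trop(x): x\in S_\cR\cap\cR^n_{>0}\}$; since $A$ is continuous and sends the closed polyhedral set $\trop(S)$ to a set that is again polyhedral, hence closed, we get $\overline{A(X)} = A(\overline{X}) = A(\trop(S))$. Combining the three steps gives $\trop(M_A(S)) = \tcone\,A(\trop(S))$.

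I expect the main obstacle to be this last identity. One has to be careful that $\varphi_A$ is only ``valuation-linear'' on the positive part of $\cR^n$: a point of $S$ lying on a coordinate hyperplane may still map under $\varphi_A$ into the dense torus of $\R^{|A|}$, so the hypothesis $S\subset\overline{S\cap\R^n_{>0}}$ is genuinely needed in order to reduce to $\R^n_{>0}$. One should also keep track of which closure is in play at each step --- the norm closure over $\cR$, the Euclidean closure over $\R$, and the tropical conical hull --- and invoke the appropriate insensitivity statement each time. The only nontrivial purely geometric input is that $A(\trop(S))$ is closed, which follows from the standard fact that a linear image of a polyhedron is again a polyhedron.
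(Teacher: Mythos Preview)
Your proposal is correct and follows exactly the same three-step decomposition as the paper's proof: identify $M_A(S)$ with $\overline{\cone(\varphi_A(S))}$, tropicalize $\varphi_A$ to the linear map $A$, and invoke \Cref{prop:TropConvCommuting} to commute $\trop$ with $\cone$. You are considerably more careful than the paper about the role of the density hypothesis and the various closure steps (the paper's proof simply asserts $\trop(\varphi_A(S)) = A(\trop(S))$ in one line), but there is no genuine difference in approach.
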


\begin{proof}
% The moment cone $M_A(S)$ is the closure in $\R[x_1,\ldots,x_n]_A^*$ of the convex cone generated by the point evaluations at the points of $S$. The point evaluations have coordinates $(x^\alpha\colon \alpha\in A)$ in the monomial basis (for $x\in S$). So the tropicalization of the set point evaluations is equal to the image of $\trop(S)$ under the linear map $A$. 
Recall that the closed moment cone $M_A(S)$ can be described as
\[
M_A(S) = \overline{\cone(\varphi_A(S))}.
\] 
As seen in \Cref{subsec:TropDefs}, tropicalization only sees the points with nonzero coordinates, and taking closure does not affect tropicalization, so
\[
\trop M_A(S) = \trop \left(\cone(\varphi_A(S)) \cap \cR_{>0}^A\right).
\] 
Since $S\subset \overline{S\cap \R^n_{>0}}$ and $\varphi_A$ is a monomial map, we have $\varphi_A(S)\subset \overline{\varphi_A(S)\cap \R^A_{>0}}$ as well.  This implies that 
\[\cone\left(\varphi_A(S) \cap \cR_{>0}^A\right) = (\cone \varphi_A(S)) \cap \cR_{>0}^A = \cone\left(\varphi_A(S \cap \cR_{>0}^n)\right). \]
 By \eqref{eqn:TropConvCommuting} in \Cref{sec:tropConv},
 the tropicalization of the conical hull equals the tropical conical hull of the tropicalization for sets in the positive orthant, 
 so
\[
\trop M_A(S) = \tcone \trop \varphi_A(S\cap\cR_{>0}^n).
\]
On the other hand, tropicalization turns monomial maps into linear maps, so we have \[\trop(\varphi_A(S\cap \cR_{>0}^n)) = A \trop(S\cap \cR_{>0}^n) = A \trop(S) ,\]
 which proves the claim. 
\end{proof}

The above lemma allows us to give an elegant description of tropicalizations of subsets of $S\subset \R_{\geq 0}^n$ with the Hadamard property.

\begin{theorem}\label{thm:genmom}
  Let $S\subset \R_{\geq 0}^n$ be a semi-algebraic subset with the Hadamard property such that $S\subset \overline{S\cap \R^n_{>0}}$. The tropicalization of the moment cone $M_A(S)$ for a support set $A\subset \Z_{\geq 0}^n$ is the set of all functions $h\colon A \to \R$ satisfying the inequalities \begin{equation}\label{eqn:genmom}
  \sum_{i=1}^r \lambda_i h(a_i) \geq h(b)\,\,\,\, \text{for all} \,\,\,\, a_1,\ldots,a_r,b\in A, \,\, \lambda_i \geq 0, \sum_{i=1}^r \lambda_i = 1\,\,\, \text{with}\,\,\, \sum_{i=1}^r \lambda_i a_i - b \in \trop(S)^\vee.\end{equation}
  That is, $\trop(M_A(S)) = \mathcal K_{A,C}$ where $C = \trop(S)^\vee$. 
\end{theorem}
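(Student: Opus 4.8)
The plan is to combine \Cref{lem:tropMA} with \Cref{thm:K_{A,C}AsMax}. By \Cref{lem:tropMA}, we have $\trop(M_A(S)) = \tcone A(\trop(S))$, so it suffices to show that $\tcone A(\trop(S)) = \mathcal K_{A,C}$ where $C = \trop(S)^\vee$. Here we use the standing hypotheses: since $S$ has the Hadamard property and $S \subset \overline{S \cap \R^n_{>0}}$, the tropicalization $\trop(S)$ is a rational polyhedral convex cone (as noted after \Cref{prop:had}), hence $\trop(S) = C^\vee$ by biduality of closed convex cones.

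The key identification is then $A(\trop(S)) = A(C^\vee) = \{Au : u \in C^\vee\}$, which is exactly the set appearing in the statement of \Cref{thm:K_{A,C}AsMax}. That theorem asserts precisely that $\tcone(A(C^\vee)) = \mathcal K_{A,C}$. Chaining the equalities gives
\[
\trop(M_A(S)) = \tcone A(\trop(S)) = \tcone(A(C^\vee)) = \mathcal K_{A,C},
\]
and unwinding \Cref{def:generalizedcones} shows $\mathcal K_{A,C}$ is exactly the set of $h\colon A \to \R$ satisfying the inequalities \eqref{eqn:genmom}, since $\sum \lambda_i a_i - b \in \trop(S)^\vee = C$ is the defining condition. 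The fact that this cone is rational polyhedral follows because $\trop(S)$ is a rational polyhedral cone, $A$ is an integer matrix, and the tropical conical hull of a polyhedron is a polyhedron by \Cref{lem:tropconv}.

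The only point requiring a small amount of care is the biduality step $\trop(S) = (\trop(S)^\vee)^\vee$: this needs $\trop(S)$ to be a closed convex cone, which is guaranteed here because $S$ is semialgebraic (so $\trop(S)$ is a closed polyhedral set) and has the Hadamard property (so $\trop(S)$ is closed under addition, hence a convex cone containing the origin). I expect no serious obstacle; essentially all the work has been front-loaded into \Cref{lem:tropMA} and \Cref{thm:K_{A,C}AsMax}, and the present theorem is their composition together with the elementary observation that the defining inequalities of $\mathcal K_{A,C}$ in \Cref{def:generalizedcones} match \eqref{eqn:genmom} verbatim once one substitutes $C = \trop(S)^\vee$.
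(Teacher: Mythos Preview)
Your proposal is correct and matches the paper's own proof essentially line for line: the paper also invokes \Cref{lem:tropMA} to get $\trop(M_A(S)) = \tcone(A\trop(S))$ and then \Cref{thm:K_{A,C}AsMax} to identify this with $\mathcal K_{A,C}$ via $A(C^\vee) = A\trop(S)$. Your added remarks on biduality and rational polyhedrality are extra care the paper leaves implicit, but the approach is the same.
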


\begin{proof}
 By \Cref{lem:nonnegative}, $\trop(M_A(S)) = \tcone(A\trop(S))$. 
 Let $C = \trop(S)^\vee$. By \Cref{thm:K_{A,C}AsMax},  the cone $\mathcal{K}_{A,C}$ coincides with the tropical conical hull of $A (C^{\vee}) = A \trop(S)$.  
\end{proof}

The condition in \Cref{thm:genmom} cannot, in general, be split into two conditions, namely the convexity constraint ($\sum \lambda_i h(a_i) \geq h(b)$ for all convex combinations with $a_i,b\in A$) and the nonincreasing property ($h(a) \geq h(b)$ whenever $a-b\in \trop(S)^\vee$) as the following example shows.

\begin{example}\label{exm:nottwo}
 Consider $A = \{(3,0),(0,3),(2,2)\}$ and $S=[0,1]^2$. No point in $A$ is a convex combination of the others and for any $a\neq b\in A$, the difference $a-b$ does not lie in $\trop(S)^\vee = \R_{\leq 0}^2$. Therefore any function $h:A\to \R$ satisfies (1) and (2).  However, $\frac{1}{2}(3,0) + \frac{1}{2}(0,3) - (2,2)=(-\frac{1}{2}, -\frac{1}{2})$ belongs to $\trop(S)^\vee$ and so the functions $h:A\to \R$  in $M_A(S)$ satisfy $\frac{1}{2}h(3,0) + \frac{1}{2}h(0,3) \geq h(2,2)$. 
\end{example}

We now discuss implications of Theorem \ref{thm:genmom} for $\R_{\geq 0}^n$, the unit cube and toric cubes.

\subsection{Positive orthant}

Let $S = \RR_{\geq 0}$.  Then $\trop(S) = \RR^n$ and so $\trop(S)^\vee = \{0\}$. \Cref{thm:genmom} applied to this situation gives the following characterization of the tropicalization of the moment cone.
It recovers the result by Develin in~\cite{Develin_secant} that the tropical convex hull of a linear space $A(\trop(S))$ is the set of convex functions on $A$.

\begin{corollary}\label{prop:orthant}
The tropicalization  $\trop(M_A(\RR^n_{\geq 0}))$ is the cone of convex functions on $A$. \qed
\end{corollary}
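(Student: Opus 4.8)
The plan is to obtain this as an immediate specialization of \Cref{thm:genmom}. First I would check its two hypotheses for $S = \RR^n_{\geq 0}$: the nonnegative orthant is closed under coordinatewise multiplication, so it has the Hadamard property, and the positive orthant is dense in it, so $S \subset \overline{S \cap \RR^n_{>0}}$. Hence \Cref{thm:genmom} applies and yields $\trop(M_A(\RR^n_{\geq 0})) = \mathcal K_{A,C}$ with $C = \trop(S)^\vee$.

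Next I would identify $\trop(S)$. Since $S \cap \RR^n_{>0} = \RR^n_{>0}$, for any $y \in \RR^n$ one can choose a point $x$ in the relevant real closed field with $-\val(x_i) = y_i$ for all $i$; then $x \in (\RR^n_{>0})_\cR \subset S_\cR$, so $y \in \trop(S)$. Therefore $\trop(S) = \RR^n$, and consequently $C = \trop(S)^\vee = (\RR^n)^\vee = \{0\}$. Plugging $C = \{0\}$ into \Cref{def:generalizedcones}, the defining condition ``$\sum_i \lambda_i a_i - b \in C$'' becomes ``$\sum_i \lambda_i a_i = b$'', so $\mathcal K_{A,\{0\}}$ is exactly the cone $\mathcal K_A$ of convex functions on $A$. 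This gives $\trop(M_A(\RR^n_{\geq 0})) = \mathcal K_A$, as claimed.

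As a closing remark I would note that combining this with \Cref{lem:nonnegative}, which gives $\trop(M_A(\RR^n_{\geq 0})) = \tcone\!\big(A\,\trop(\RR^n_{\geq 0})\big) = \tcone\!\big(A\,\RR^n\big)$, recovers Develin's result from \cite{Develin_secant}: the tropical conical hull of the linear subspace $A\,\RR^n$ is precisely the cone of convex functions on $A$. There is essentially no obstacle in this argument; the only step deserving a word of justification is the identity $\trop(\RR^n_{\geq 0}) = \RR^n$ and hence $\trop(S)^\vee = \{0\}$, which is what collapses the inequality system of \Cref{thm:genmom} down to ordinary discrete convexity.
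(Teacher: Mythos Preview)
Your proposal is correct and matches the paper's approach exactly: the paper also simply observes that $\trop(\RR^n_{\geq 0}) = \RR^n$, hence $\trop(S)^\vee = \{0\}$, and then invokes \Cref{thm:genmom} to conclude $\trop(M_A(\RR^n_{\geq 0})) = \mathcal K_{A,\{0\}} = \mathcal K_A$. Your closing remark about recovering Develin's result is likewise the same observation the paper makes in the sentence preceding the corollary.
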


\begin{exampleth}\label{exm_motz1}
Consider the Motzkin configuration in the plane shown in \Cref{fig:motzkintriangle}, i.e.~$A=\{(0,0),(1,2),(2,1),(1,1)\}$. By \Cref{prop:orthant} we see that
$\trop M_A(\RR^n_{\geq 0})$ consists of functions $h:A\rightarrow \mathbb{R}$ satisfying:
\begin{align}
  h(0,0)+h(1,2)+h(2,1) \geq 3 h(1,1).\tag*{$\diamond$}
\end{align}
\end{exampleth}

\subsection{The Cube} Let $C_n = [0,1]^n$ be the $n$-dimensional cube.  Then $\trop(C_n)$ is the nonpositive orthant of $\RR^n$. Since this orthant is also self-dual as a convex cone, \Cref{thm:genmom} implies the following description of the tropicalization of the moment cone on (hyper-)cubes.

\begin{corollary}\label{prop:CubeMoments}
The following four sets are equal to each other up to natural identifications.
\begin{enumerate}
\item $\trop(M_A(C_n))$, the tropicalization of the moment cone of the unit $n$-cube.
\item The tropical conical hull of the polyhedral cone spanned by columns
  of the matrix $-A$. 
\item The polyhedral cone in $\R^A = \{h\colon A \to \R\}$ defined by inequalities of the form
  \[\lambda_1 h(a_1) + \cdots + \lambda_r h(a_r) \geq h(b)\] 
  where $a_1,\ldots,a_r,b$ are in $A$,  $\lambda_i\geq 0$, $\sum \lambda_i=1$, and $\lambda_1 a_1 + \cdots + \lambda_r a_r$ is coordinate-wise at most $b$.
  \qed
\end{enumerate}
\end{corollary}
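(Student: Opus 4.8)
\textbf{Proof proposal for Corollary \ref{prop:CubeMoments}.}

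The plan is to show that all four descriptions coincide by specializing \Cref{thm:genmom} (and \Cref{thm:K_{A,C}AsMax}) to the case $S = C_n = [0,1]^n$. First I would verify the tropical geometry input: the cube is defined by the binomial inequalities $1 \geq x_i$ (and $x_i \geq 0$), whose tropicalizations are the linear inequalities $0 \geq y_i$. These cut out the nonpositive orthant $\RR_{\leq 0}^n$, which is full-dimensional, so by the Corollary following \Cref{prop:TropConvCommuting} we get $\trop(C_n) = \RR_{\leq 0}^n$. The cube has the Hadamard property (it is closed under coordinatewise multiplication, or apply \Cref{lem:hadamardForS}), and clearly $C_n \subset \overline{C_n \cap \RR_{>0}^n}$, so \Cref{thm:genmom} applies with $C := \trop(C_n)^\vee = (\RR_{\leq 0}^n)^\vee = \RR_{\leq 0}^n$ (the nonpositive orthant is self-dual up to sign, i.e.\ its dual is itself).

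Next I would unwind the three remaining descriptions. For (3): \Cref{thm:genmom} says $\trop(M_A(C_n)) = \mathcal K_{A,C}$ with $C = \RR_{\leq 0}^n$, and by definition of $\mathcal K_{A,C}$ this is exactly the set of $h \colon A \to \RR$ satisfying $\sum \lambda_i h(a_i) \geq h(b)$ for all convex combinations with $\sum \lambda_i a_i - b \in \RR_{\leq 0}^n$; the containment $\sum \lambda_i a_i - b \in \RR_{\leq 0}^n$ is precisely the statement that $\sum \lambda_i a_i$ is coordinatewise at most $b$, which is description (3). For (1) $=$ (2): by \Cref{lem:tropMA} (i.e.\ \Cref{lem:nonnegative}), $\trop(M_A(C_n)) = \tcone(A\,\trop(C_n)) = \tcone(A \cdot \RR_{\leq 0}^n)$. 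Since $A$ as a matrix acts on column vectors $u \in \RR_{\leq 0}^n$, the image $A \cdot \RR_{\leq 0}^n$ is the polyhedral cone spanned by the columns of $-A$ (each nonpositive combination of columns of $A$ is a nonnegative combination of columns of $-A$), giving description (2). Finally, (2) $=$ (3) is just the content of \Cref{thm:K_{A,C}AsMax} applied with $C^\vee = \RR_{\leq 0}^n = \trop(C_n)$: the tropical conical hull of $A(C^\vee) = A\,\trop(C_n)$ equals $\mathcal K_{A,C}$.

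I do not anticipate a serious obstacle here, since the corollary is essentially a dictionary translation of \Cref{thm:genmom} and \Cref{thm:K_{A,C}AsMax}; the only point requiring a little care is the self-duality bookkeeping for the nonpositive orthant and the sign conventions relating ``columns of $-A$'' to ``$A$ applied to the nonpositive orthant,'' together with confirming that the condition $\sum \lambda_i a_i - b \in \RR_{\leq 0}^n$ is literally the coordinatewise inequality appearing in (3). Chaining these identifications then gives the equality of all four sets, and the \texttt{\textbackslash qed} marker at the end of the statement indicates the proof is meant to be this short.
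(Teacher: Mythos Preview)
Your proposal is correct and takes essentially the same approach as the paper: the text preceding the corollary states that $\trop(C_n)$ is the nonpositive orthant, notes that this orthant is self-dual, and then simply invokes \Cref{thm:genmom}, with the \qed at the end of item (3) indicating no further argument is given. Your write-up just makes explicit the identifications (via \Cref{lem:tropMA} and \Cref{thm:K_{A,C}AsMax}) that the paper leaves implicit inside the proof of \Cref{thm:genmom}.
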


\begin{example}\label{exm:motzcube}
We revisit the Motzkin configuration in the plane of Example \ref{exm_motz1}. The tropicalization $\trop M_A(C_2)$ is the tropical conical hull of the two dimensional convex cone with extreme rays $(0,-1,-2,-1)$ and $(0,-2,-1,-1)$. Using Proposition  \ref{prop:tconvdual} we can compute that
$\trop M_A(C_2)$ consists of functions $h:A\rightarrow \mathbb{R}$ satisfying:
\[h(1,1)\geq h(1,2), \ \ h(1,1)\geq h(2,1), \ \ \text{and} \ \ h(0,0)+h(1,2)+h(2,1)\geq 3 h(1,1).\]
The corresponding binomial moment inequalities for measures supported on $[0,1]^2$ are:
\[m_{(1,1)}\geq m_{(1,2)}, \ \ m_{(1,1)}\geq m_{(2,1)},\ \ \text{and} \ \  m_{(0,0)}m_{(1,2)}m_{(2,1)}\geq m_{(1,1)}^3.\]
We only need to consider the pure binomials, by Remark~\ref{rmk:pureBinomials}.
\end{example}

The following example gives a better illustration of the difference between binomial moment inequalities for the orthant and the cube.
\begin{example}
Consider the following bivariate moments $A=\{(4,0),(0,4),(3,2)\}$. Since these points are in convex position, the tropicalization $\trop M_A(\mathbb{R}^2_{\geq 0})$ is all of $\RR^3$, and there are no binomial inequalities satisfied by these moments for measures supported on $\mathbb{R}_{\geq 0}$.

For the cube $C=[0,1]^2$ the tropicalization $\trop M_A(C_2)$ consists of functions $h:A\rightarrow \mathbb{R}$ satisfying:
\[h(4,0)+h(0,4)\geq 2h(3,2) \ \text{ and } \ 3h(4,0)+h(0,4)\geq 3 h(1,1).\]
These inequalities correspond to inequalities $\frac{1}{2}(4,0)+\frac{1}{2}(0,4) \leq (3,2)$ and $\frac{3}{4}(4,0)+\frac{1}{4}(0,4) \leq (3,2)$ coordinate-wise in $\mathbb{R}^2$. The corresponding binomial moment inequalities are:
\[m_{(4,0)}m_{(0,4)}\geq m_{(3,2)}^2 \ \text{ and } \ m_{(4,0)}^3m_{(0,4)}\geq m_{(1,1)}^3.\]
\end{example}

\subsection{Toric Cubes} Consider a subset $S$ of $[0,1]^n$ defined by binomial inequalities that is the closure of its positive points $S\cap (0,1)^n$. Such a set was called a \emph{toric cube} in \cite{toriccubes}. They also showed that every toric cube $S$ can be parametrized by a monomial map $\varphi:[0,1]^d\rightarrow [0,1]^n$ taking $\mathbf{t}=(t_1,\dots,t_d)$ to $(\mathbf{t}^{b_1},\dots, \mathbf{t}^{b_n})$ for appropriate exponent vectors $b_i$. Let $Q$ be a $d \times n$ matrix with columns $b_1, \dots, b_n$. We can think of $Q$ as a map from $\N^n$ to $\N^d$. 

Let $A=\{a_1,\dots, a_k\}\subset \N^n$ be a collection of lattice points and consider the moment cone $M_A(S)$. The pullback via the monomial parametrization $\varphi$ of $S$ gives a new configuration $A'$ whose points are given by $Qa_i$ for $1\leq i\leq k$.
So the moment cone $M_A(S)$ is the same as $M_{A'}([0,1]^d)$.
\begin{example}\label{exm_motztoric}
Let $S$ be the subset of $[0,1]^2$ given by the inequalities $y\geq x^3$ and $y\leq x^2$. We can see that $S$ is the image of $[0,1]^2$ under the map $(t_1,t_2) \mapsto(t_1t_2,t_1^2 t_2^3).$ Then $Q$ is given by $\begin{pmatrix} 1&2\\1&3\end{pmatrix}$.

Consider again the Motzkin configuration in the plane $A=\{(0,0),(1,2),(2,1),(1,1)\}$. The new configuration $A'=Q(A)$ is given by $A'=\{(0,0),(5,7),(4,5),(3,4)\}$. 
\end{example}

We apply \Cref{thm:genmom} to $M_{Q(A)}([0,1]^d)$ to obtain a description of $\trop \left(M_A(S)\right)$. We use that $\trop(S) = Q^T \trop([0,1]^d)$, which implies by duality, using $\trop([0,1]^d)^\vee = \R^d_{\leq 0}$, the description of the relevant cone appearing in \Cref{thm:genmom}, namely $\trop(S)^\vee = \{ h \in \R^n \colon Q h \in \R^d_{\leq 0} \}$.
\begin{corollary}\label{prop:ToricCube}
The following sets are equal. 
\begin{enumerate}
\item $\trop(M_A(S))$, the tropicalization of the moment cone
\item The tropical conical hull of the polyhedral cone spanned by columns
  of the matrix $-A'$, where the points in $A'=Q(A)$ are written as the rows.
\item The polyhedral cone of functions $h\colon A \to \R$ defined by inequalities of the form
  $\lambda_1 h(a_1) + \cdots + \lambda_r h(a_r) \geq h(b)$ where $\lambda_i \geq 0$, $\lambda_1+\cdots + \lambda_r =1$, and $\lambda_1 Q(a_1) + \cdots + \lambda_r Q(a_r) \leq b$ coordinate-wise in $\mathbb{R}^d$. \qed
\end{enumerate}
\end{corollary}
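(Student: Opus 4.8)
The plan is to reduce Corollary~\ref{prop:ToricCube} directly to \Cref{thm:genmom} (equivalently its specialization \Cref{prop:CubeMoments}) applied to the pulled-back configuration $A' = Q(A)$ inside the cube $[0,1]^d$. The key observation, already recorded just before the statement, is that the monomial parametrization $\varphi : [0,1]^d \to S$, $\mathbf t \mapsto (\mathbf t^{b_1},\dots,\mathbf t^{b_n})$, identifies the moment cone $M_A(S)$ with $M_{A'}([0,1]^d)$: indeed for a point $x = \varphi(\mathbf t)$ we have $x^{a_i} = \mathbf t^{Q a_i}$, so the monomial evaluation vector $(x^a : a\in A)$ equals the evaluation vector $(\mathbf t^{a'} : a'\in A')$, and passing to conical hulls and closures (using that the positive points of $S$ are dense in $S$) gives $M_A(S) = M_{A'}([0,1]^d)$ as cones in the same ambient space $\R^A \cong \R^{A'}$. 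Since these are literally the same convex cone, their tropicalizations agree: $\trop(M_A(S)) = \trop(M_{A'}([0,1]^d))$.

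From here the three items are just \Cref{prop:CubeMoments} transported through this identification. First I would apply \Cref{lem:nonnegative} (or \Cref{prop:TropConvCommuting}) to get $\trop(M_{A'}([0,1]^d)) = \tcone\big(A'\,\trop([0,1]^d)\big) = \tcone\big(A'\,\R^d_{\le 0}\big)$, which is exactly the tropical conical hull of the cone spanned by the columns of $-A'$ (the rows of $A'$ being the points $Q(a_i)$) — this is item (2). For item (3), I would invoke \Cref{thm:genmom} with $S$ replaced by $[0,1]^d$ and configuration $A'$: this gives $\trop(M_{A'}([0,1]^d)) = \mathcal K_{A',\,\R^d_{\le 0}}$, i.e.\ the functions $g : A' \to \R$ with $\sum \lambda_i g(a_i') \ge g(b')$ whenever $\sum \lambda_i a_i' - b' \in \trop([0,1]^d)^\vee = \R^d_{\le 0}$. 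Re-indexing $g$ by $A$ via $h(a_i) = g(Q(a_i))$ and noting $\sum \lambda_i Q(a_i) - Q(b) \in \R^d_{\le 0}$ is precisely the coordinatewise inequality $\sum \lambda_i Q(a_i) \le b$ in the statement (after replacing $b$ by $Q(b)$), we recover the inequality description in item (3). The only subtlety is that in item (3) the right-hand point $b$ ranges over $A$ but we compare $Q(a_i)$ against $b$ rather than against $Q(b)$; this is consistent because the identification sends $h(b)$ to $g(Q(b))$ and the cube-moment inequalities for $A'$ are stated with $b' = Q(b)$, so one simply substitutes.

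The one point requiring a little care — and the step I expect to be the main obstacle — is the claim $\trop(S) = Q^T\,\trop([0,1]^d)$ and its dual $\trop(S)^\vee = Q(\R^d_{\le 0})$, used implicitly to line up the description with \Cref{thm:genmom}. This follows from \Cref{prop:TropConeLog}: since $S = \varphi([0,1]^d)$ and $\varphi$ is a monomial map with exponent matrix $Q^T$ (so $\log\varphi(\mathbf t) = Q^T \log\mathbf t$ coordinatewise on the positive part), we get $\log(S \cap \R^n_{>0}) = Q^T \log((0,1)^d)$, hence $\trop(S) = \overline{\cone(Q^T \log((0,1)^d))} = Q^T\,\overline{\cone(\log((0,1)^d))} = Q^T\,\R^d_{\le 0}$, using that a linear image of a closed polyhedral cone is closed. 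Taking duals, $\trop(S)^\vee = (Q^T \R^d_{\le 0})^\vee = \{x : \langle Q^T u, x\rangle \le 0 \ \forall u \le 0\} \supseteq$ — more directly, the adjoint identity $(Q^T C)^\vee$ need not equal $Q(C^\vee)$ in general, but here we only ever use $C^\vee = \trop([0,1]^d) = \R^d_{\le 0}$, apply \Cref{thm:genmom} to $[0,1]^d$ (not to $S$), and transport; so in fact no statement about $\trop(S)^\vee$ is logically needed for the proof, only the cone identification $M_A(S) = M_{A'}([0,1]^d)$. With that said, the cleanest writeup is: prove $M_A(S) = M_{A'}([0,1]^d)$, then cite \Cref{prop:CubeMoments} verbatim for $A'$ and rewrite each of its three descriptions in terms of $A$ and $Q$. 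The only genuine check is the elementary identity $x^{a_i} = \mathbf t^{Q a_i}$ for $x = \varphi(\mathbf t)$, together with density of positive points, which makes the closure operations defining the two moment cones coincide.
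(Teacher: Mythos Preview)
Your proposal is correct and follows essentially the same route as the paper: the paper's entire argument is the sentence preceding the corollary, namely to identify $M_A(S)$ with $M_{A'}([0,1]^d)$ via the monomial parametrization and then apply \Cref{thm:genmom} (equivalently \Cref{prop:CubeMoments}) to the pulled-back configuration $A'$ on the cube. You have also correctly spotted that in item~(3) the comparison ``$\sum \lambda_i Q(a_i) \leq b$ in $\R^d$'' only makes sense with $Q(b)$ in place of $b$, which is indeed what falls out of transporting \Cref{prop:CubeMoments} through the identification.
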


\begin{example}[Example \ref{exm_motztoric} continued]
 Using Proposition  \ref{prop:tconvdual} we can compute that
$\trop M_A(S)$ consists of functions $h:A\rightarrow \mathbb{R}$ satisfying:
\begin{align*}h(2,1)\geq h(1,2),\\
h(1,2)+2h(1,1)\geq 3 h(2,1),\\
h(0,0)+3h(2,1)\geq 4 h(1,1),\\
h(0,0)+h(1,2)+h(2,1)\geq 3 h(1,1).
\end{align*}
The corresponding binomial moment inequalities for measures supported on $S$ are:
\begin{align*}m_{(2,1)}\geq m_{(1,2)},\\
m_{(1,2)}m^2_{(1,1)}\geq m_{(2,1)}^3,\\
m_{(0,0)}m_{(2,1)}^3\geq m_{(1,1)}^4,\\
m_{(0,0)}m_{(1,2)}m_{(2,1)}\geq m_{(1,1)}^3.
\end{align*}
Observe that since $S$ is a subset of $[0,1]^2$ the binomial inequalities for $[0,1]^2$ from Example \ref{exm:motzcube} hold, but we also acquire additional inequalities.
\end{example}

\subsection{Measures supported on all of $\mathbb{R}^n$}
For measures supported on all of $\mathbb{R}^n$ we now find a nice description of the tropicalization of the moment cone, where even points in $A$ play a more prominent role.

The cone $\operatorname{trop} (M_{A}( \mathbb{R}^n))$ naturally contains $\operatorname{trop} (M_{A}( \mathbb{R}^n_{\geq 0}))$ because every measure with support in $\R^n_{\geq 0}$ is, by extension by $0$, a measure on $\R^n$.
So one way to formulate our question is to ask: which linear inequalities valid on $\operatorname{trop} (M_{A}( \mathbb{R}^n_{\geq 0}))$, generated by convexity inequalities on almost-empty simplices, are also valid on $\operatorname{trop} (M_{A}( \mathbb{R}^n))$?

We call an almost-empty lattice simplex \emph{even} if all of its vertices are even lattice points (whereas the interior lattice point is not required to be even). We now state the main theorem of this subsection; it is very similar to Proposition \ref{prop:orthant}, but we now only consider even almost-empty simplices.
\begin{theorem}\label{thm:whole}
A function $h:A\rightarrow \mathbb{R}$ belongs to $\operatorname{trop} (M_{A}( \mathbb{R}^n))$ if and only if $h$ is convex on all almost-empty even simplices in $A$.
\end{theorem}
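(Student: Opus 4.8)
The plan is to identify $\trop(M_A(\R^n))$ with $\tcone(A\cdot\trop(\R^n))$ via the Hadamard-property machinery of \Cref{prop:had,prop:TropConvCommuting} (note $\R^n$ has the Hadamard property and $\R^n\subset\overline{\R^n\cap(\R^*)^n}$), and then to compute this tropical conical hull explicitly. The crucial point distinguishing this case from \Cref{prop:orthant} is that $\trop(\R^n)$ is \emph{not} all of $\R^n$: working coordinate-wise, a Puiseux point $x_i$ can be positive or negative, so the image of $\R^n$ under coordinatewise valuation-negation, together with a sign vector $\epsilon\in\{\pm1\}^n$, gives the copy of $\R^n$ but the sign data constrains which monomials $x^a$ are forced to be positive. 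The upshot (which I would state as a lemma) is that $\trop(M_A(\R^n))$ is the tropical conical hull of the union, over $\epsilon\in\{\pm1\}^n$, of the linear subspaces $\{h\colon A\to\R : h(a)=\langle u,a\rangle + (\text{sign contribution})\}$; equivalently, by restricting to even coordinates, a point-evaluation at $x=t^y\circ\epsilon$ contributes the linear functional $a\mapsto\langle y,a\rangle$ only for those $a$ with $\epsilon^a=1$, while for odd $a$ cancellation in the Puiseux series (between the $\epsilon$ and $-\epsilon$ evaluations, or within a single measure) can make the $a$-coordinate of a moment sequence attain any valuation up to the ``convex'' bound. This is where even lattice points acquire their special role, exactly as in Reznick's AGI/SONC analysis referenced in the introduction.

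The concrete route I would take: first reduce to showing that a linear inequality $\sum_i\lambda_i h(a_i)\ge h(b)$ (with $\sum\lambda_i=1$, $\sum\lambda_i a_i=b$) is valid on $\trop(M_A(\R^n))$ if and only if it holds on the restriction to every even almost-empty simplex — i.e.\ the facets of the target cone are precisely the even almost-empty-simplex convexity inequalities. The ``only if'' direction (necessity of convexity on even simplices) is easy: given an even almost-empty simplex $\{a_0,\dots,a_k,b\}$, restrict to measures supported on $\R^n_{\ge0}$ (which embed into measures on $\R^n$) and apply \Cref{prop:orthant} / \Cref{lem:emptysimplex} — actually one must be slightly more careful, since on $\R^n$ more functions are allowed, so instead one exhibits explicit moment sequences in $M_A(\R^n)$ violating the inequality for a \emph{non-even} simplex, using point masses at $\pm$-sign-flipped points to kill the odd monomials. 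For the ``if'' direction (sufficiency), I would show that any $h$ convex on all even almost-empty simplices lies in the tropical conical hull: decompose via the even sublattice, extend $h\!\restriction_{A_{\text{even}}}$ to a convex piecewise-linear function using \Cref{thm:K_{A,C}AsMax} with $C=\{0\}$, and then for odd points $a\in A$ observe that the tropical hull already contains a ray making the $a$-coordinate arbitrarily negative (because odd monomials are not sign-constrained), so $h(a)$ — which a priori satisfies no lower constraint from even-simplex convexity — can be matched.

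The main obstacle, and the step deserving the most care, is making precise the claim that odd lattice points are genuinely ``free'' — that is, that for odd $a$ there is no nontrivial convexity inequality forced, because one can realize moment sequences where $m_a$ is arbitrarily small (in valuation) while the even moments are pinned. Over $\R^n_{\ge0}$ this would be false; over $\R^n$ one uses that $\int x^a\,d\mu$ can be made to cancel by symmetrizing $\mu$ under $x\mapsto(-x_1,\dots,-x_n)$ or more local sign flips on the coordinates appearing oddly in $a$. Translating this cancellation phenomenon correctly into the valuation/tropical language — i.e.\ showing the tropicalization of the set of point evaluations is the union of sign-twisted subspaces and that its tropical conical hull is cut out exactly by the even-simplex inequalities — is the technical heart. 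I expect the cleanest packaging is to first prove the sign-twisted-subspace description of $\trop\{\varphi_A(x):x\in\R^n\}$ as a preliminary lemma (mirroring the known structure of tropicalizations of real varieties with respect to sign patterns), and then invoke \Cref{prop:TropConvCommuting} and a facet computation analogous to the proof of \Cref{p:conesfacets} restricted to even simplices.
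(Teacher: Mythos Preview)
Your starting identification $\trop(M_A(\R^n)) = \tcone(A\cdot\trop(\R^n))$ is false, and with it the architecture of the proposal collapses. Observe that $\trop(\R^n) = \R^n$ (every real $y_i$ is $-\val(t^{y_i})$ for some Puiseux element), so $A\cdot\trop(\R^n) = A\cdot\R^n$ is the \emph{same} linear subspace one gets for $\R^n_{\geq 0}$. If your identity held it would give $\trop(M_A(\R^n)) = \tcone(A\cdot\R^n) = \mathcal K_A$, i.e.\ convexity on \emph{all} almost-empty simplices --- exactly what the theorem says does \emph{not} happen. The mistake is in invoking \Cref{prop:TropConvCommuting}: that result is stated for subsets of the nonnegative orthant, and $\varphi_A(\R^n)\not\subset\R_{\geq 0}^A$ since odd-exponent monomials take negative values. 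The proof of \Cref{prop:TropConvCommuting} rests on the no-cancellation fact that $-\val(s+t)=\max(-\val(s),-\val(t))$ for positive $s,t$; with mixed signs this fails, and that cancellation is precisely the mechanism that destroys the non-even simplex inequalities. Consequently your next sentence, that ``$\trop(\R^n)$ is not all of $\R^n$'', is simply wrong, so your diagnosis of what distinguishes this case from \Cref{prop:orthant} misfires.

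Your instincts about sign-flipping and odd moments being ``free'' are correct, but the paper packages them on the dual (inequality) side rather than the generating side. First (\Cref{lem:evenmoment}) it shows, by explicit construction of measures, that any valid pure binomial inequality $\prod |m_\alpha|^{a_\alpha}\geq\prod|m_\beta|^{a_\beta}$ on $M_A(\R^n)$ must have only even exponents on the ``greater'' side: for any odd $\gamma$ one builds a measure with $m_\gamma=0$ and all other $A$-moments nonzero (univariate Hankel argument, then lift via a generic monomial curve). Second (\Cref{lem:mid}) it uses the folded measure $|\mu|(\mathcal B)=\sum_{g\in\{\pm1\}^n}\mu(g\mathcal B)$ --- essentially your symmetrization idea --- to show that an inequality with only even positive terms is valid on $M_A(\R^n)$ iff it is valid on $M_A(\R^n_{\geq 0})$. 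Together these give $\trop(M_A(\R^n))^\vee = \trop(M_A(\R^n_{\geq 0}))^\vee\cap T$, where $T$ forces non-even-indexed coordinates to be nonpositive. Finally, \Cref{prop:tconvdual} and \Cref{lem:emptysimplex} identify the extreme rays of this intersection as exactly the even almost-empty-simplex inequalities. If you want to repair your approach, drop the $\tcone$ identity and prove these two lemmas directly; your measure-symmetrization sketch is already close to \Cref{lem:mid}.
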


We first establish helpful facts before proving this theorem.
\begin{lemma}\label{lem:evenmoment}
Let $A_+$ and $A_-$ be disjoint subsets of $A$. Suppose that a pure binomial inequality
$$
\prod_{\alpha \in A_+} |m_\alpha |^{a_\alpha}\geq \prod_{\beta \in A_-}  |m_\beta |^{a_\beta}
$$
is valid on $M_A(\mathbb{R}^n)$ for some $a_{\alpha}, a_{\beta}\in \Z_{> 0}$. 
Then all lattice points in $A_+$ must be even.
\end{lemma}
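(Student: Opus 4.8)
The plan is to exploit the sign-flipping symmetries of $\mathbb{R}^n$ together with the fact that tropicalization only depends on coordinate-wise absolute values. Suppose, for contradiction, that the binomial inequality $\prod_{\alpha \in A_+} |m_\alpha|^{a_\alpha} \geq \prod_{\beta \in A_-} |m_\beta|^{a_\beta}$ is valid on $M_A(\mathbb{R}^n)$ but some $\gamma \in A_+$ is not even, i.e. $\gamma$ has at least one odd coordinate, say the $j$-th. I would then construct a measure (or a sequence of point evaluations) on $\mathbb{R}^n$ that violates the inequality: the idea is to symmetrize a test measure under the reflection $\sigma_j \colon x \mapsto (x_1,\dots,-x_j,\dots,x_n)$. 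For any measure $\mu$, replacing $\mu$ by $\tfrac12(\mu + \sigma_{j\,*}\mu)$ leaves $m_\alpha$ unchanged for every $\alpha$ with $\alpha_j$ even, but forces $m_\alpha = 0$ for every $\alpha$ with $\alpha_j$ odd — in particular it kills $m_\gamma$. Since $\gamma \in A_+$ appears with a strictly positive exponent $a_\gamma > 0$ on the larger side of the inequality, the left-hand side becomes $0$ while we can arrange the right-hand side to be strictly positive, contradicting validity.

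The one point requiring care is making the right-hand side strictly positive \emph{after} symmetrization, since symmetrizing also kills $m_\beta$ for any $\beta \in A_-$ with odd $j$-th coordinate. So before symmetrizing I would first choose a measure supported on the \emph{positive} orthant $\mathbb{R}^n_{>0}$ (for instance a well-chosen finite combination of point masses, using that $M_A(\mathbb{R}^n_{\geq 0})$ has nonempty interior in its span, or a density like a product of Gamma-type weights) so that \emph{every} $m_\delta$ with $\delta \in A$ is strictly positive; such a measure is already supported on $\mathbb{R}^n$. Symmetrizing in the $j$-th coordinate then sends exactly the odd-in-$j$ moments to $0$ and preserves positivity of all even-in-$j$ moments. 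If every $\beta \in A_-$ had $\beta_j$ even while $\gamma \in A_+$ has $\gamma_j$ odd, the resulting moment vector gives LHS $=0 <$ RHS, a contradiction. If some $\beta \in A_-$ also has $\beta_j$ odd, both sides vanish in coordinate $j$ alone, so instead I would symmetrize simultaneously over the subgroup generated by all reflections $\sigma_j$ for which $\gamma_j$ is odd; this kills $m_\gamma$ (since $\gamma$ is odd in at least one such coordinate) but a monomial $\prod_{\beta} |m_\beta|^{a_\beta}$ on the RHS survives as long as each such $\beta$ is even in \emph{all} of those coordinates — and if some $\beta$ is not, one can peel it off by a further averaging argument, or observe more directly that the valid binomial inequality, restricted to the fully symmetrized (hence effectively orthant-supported) moments, would force a nontrivial binomial inequality among positive orthant moments with $m_\gamma$ absent from one side, which is fine, but with $m_\gamma^{a_\gamma}$ present on the side that is now identically $0$ — impossible.

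An alternative, cleaner route — which I would actually prefer to write up — is to phrase everything through tropicalization directly. By the earlier results (\Cref{thm:genmom} applied with $S = \mathbb{R}^n_{\geq 0}$, or rather its analogue and the containment $\trop(M_A(\mathbb{R}^n_{\geq 0})) \subseteq \trop(M_A(\mathbb{R}^n))$), a valid pure binomial inequality on $M_A(\mathbb{R}^n)$ tropicalizes to a linear inequality $\sum_{\alpha \in A_+} a_\alpha h(\alpha) \geq \sum_{\beta \in A_-} a_\beta h(\beta)$ valid on all of $\trop(M_A(\mathbb{R}^n))$. Take the measure $\mu_t = \sum_{\varepsilon \in \{\pm 1\}^n} \delta_{\varepsilon \cdot t^{v}}$ for a generic $v \in \mathbb{R}^n$ and scaling parameter $t \to \infty$; its moments are $m_\alpha(\mu_t) = t^{\langle v, \alpha\rangle}\sum_{\varepsilon}\varepsilon^\alpha$, which vanishes identically whenever $\alpha$ is odd and equals $2^n t^{\langle v,\alpha\rangle}$ whenever $\alpha$ is even. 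Thus the point $h$ with $h(\alpha) = \langle v, \alpha \rangle$ for $\alpha$ even and $h(\alpha) = -\infty$ (formally, any sufficiently small value, obtained by a limiting argument) for $\alpha$ odd lies in $\trop(M_A(\mathbb{R}^n))$. Feeding this into the tropicalized inequality, the term $a_\gamma h(\gamma)$ on the left can be driven to $-\infty$ while the right-hand side stays finite (choosing $v$ so that all even moments appearing on the right are nonzero and comparable), contradicting validity. The main obstacle is handling the $-\infty$ entries rigorously: one must replace them by a sequence of genuine finite values realized by actual moment vectors and check the inequality degrades in the limit — this is where I would invoke that $\trop(M_A(\mathbb{R}^n))$ is closed and that one can perturb the vanishing odd moments to be tiny but nonzero by adding a small generic asymmetric measure, so that the corresponding tropical coordinates tend to $-\infty$ while staying inside the cone. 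I expect this limiting/closedness bookkeeping to be the only genuinely delicate step; the structural content — symmetrization kills odd moments — is immediate.
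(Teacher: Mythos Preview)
Your symmetrization idea is natural, but it has a genuine gap that neither of your two routes closes. The problem is that reflecting in a coordinate (or a subgroup of coordinates) kills \emph{every} moment that is odd in one of those coordinates, not just $m_\gamma$. So if some $\beta\in A_-$ happens to be odd in the same coordinate(s) as $\gamma$, both sides of the inequality become $0$ and you learn nothing. Your ``peel it off by a further averaging argument'' is not an argument, and the ``observe more directly'' sentence is simply wrong: after symmetrization the right-hand side \emph{can} be identically~$0$ on symmetric measures (precisely when such a $\beta$ exists), so there is no contradiction. Your tropical route has exactly the same defect: with the fully symmetrized atomic measure, \emph{all} odd coordinates get sent to $-\infty$ simultaneously, so if $A_-$ contains any odd point the right-hand side of the tropicalized inequality also goes to $-\infty$, and the perturbation you suggest only gives an aggregate inequality of the form $\sum_{\alpha\in A_+\text{ odd}}a_\alpha\le\sum_{\beta\in A_-\text{ odd}}a_\beta$, which is strictly weaker than ``every $\alpha\in A_+$ is even''.

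The paper avoids this by proving a sharper statement: for any single non-even $\gamma\in A$ there is a measure on $\R^n$ with $m_\gamma=0$ and $m_\alpha\neq 0$ for \emph{all} $\alpha\in A\setminus\{\gamma\}$. That immediately gives the contradiction (LHS~$=0$, RHS~$>0$) regardless of the parity of the points in $A_-$. The construction is first done in one variable by choosing moments so that the relevant Hankel matrix is positive definite (by induction on the size, setting the target odd moment to~$0$ and taking the last even moment large), and then lifted to $\R^n$ by pushing forward along a curve $t\mapsto(t^{v_1},\dots,t^{v_n})$ for a generic $v\in\N^n$ chosen so that $\langle v,\cdot\rangle$ is injective on $A$ and $\langle v,\gamma\rangle$ is odd. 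The point is that you need a construction that isolates $m_\gamma$ among the odd moments; coordinate reflections cannot do that.
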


\begin{proof}
It suffices to show that for any $\gamma\in A$ with $\gamma\not\in 2\N$, 
there exists a measure on $\R^n$ with $m_{\gamma}=0$ and all other moments non-zero. 

We first deal with the univariate case $n=1$. 
We need to show that for any odd $k\in A$ there is a measure on $\R$ whose $k$th moment is $0$ and all others are non-zero.
We use Hamburger's moment problem (see  \cite[Section~3.1]{MR2383959}) and proceed by induction on $d = \lceil \max(A)/2 \rceil$. 
If the $(d+1)\times (d+1)$ matrix $(m_{i+j})_{0\leq i, j \leq d}$ is positive definite, 
then there is a measure $\mu$ on the real line with moments $(m_0, m_1, \hdots, m_{2d})$. 
 For $d=0$, there are no odd moments, and so there is nothing to show. 
Now let $m_k=0$ and, by induction, suppose that we have a choice of 
$(m_0, m_1, \hdots, m_{2d-2}) \in \R^{2d-1}$
so that  $(m_{i+j})_{0\leq i, j \leq d-1}$ is positive definite. 
If $k\neq 2d-1$, then let $m_{2d-1}$ be any nonzero real number. Otherwise, if $k= 2d-1$, we set $m_{2d-1} = 0$. 
For sufficiently large $m_{2d}\in \R_+$, the resulting matrix $(m_{i+j})_{0\leq i, j \leq d}$
will be positive definite, meaning that $(m_0, m_1, \hdots, m_{2d})$ is the vector of moments of 
some measure on $\R$, all of which are nonzero except for $m_k$. 
In fact, we can choose an atomic measure on $\R$, meaning that for some 
$t_1,\dots,t_{d+1}\in \R $ and nonnegative weights $w_1, \hdots, w_{d+1}\in \R_{\geq 0}$, 
$m_j = \sum_{i=1}^{d+1} w_i t_i^j$ for all $j = 0, \hdots, 2d$. 

Now we leverage the univariate case to prove the multivariate case. 
Suppose that for some $\gamma = (\gamma_1, \hdots, \gamma_n)\in A_+$, $\gamma_1$ is odd. 
Let $v = (v_1, \hdots v_n)$ be an element of $\N^n$ such that $v_1$ is odd and $v_2, \hdots, v_n$ are even, 
and $\langle \alpha, v\rangle  \neq \langle \beta, v\rangle$ for any pair of distinct elements $\alpha, \beta$ in $A$.
As $\langle \alpha, v\rangle  = \langle \beta, v\rangle$ defines a hyperplane in $\R^n$, there are many such choices. 

The image of $A$ under $\langle \cdot, v\rangle$ is a finite subset of $\N$. 
Moreover 
$\langle \gamma, v\rangle$ is odd. By the univariate case, there are some 
$t_1,\dots,t_{d}\in \R $ and nonnegative weights $w_1, \hdots, w_{d}\in \R_{\geq 0}$, 
so that $\sum_{i=1}^d w_i t_i^j$ is zero for $j = \langle \gamma, v\rangle$ 
and nonzero for all $j = \langle \alpha, v\rangle$ where $\alpha \in A\backslash\{\gamma\}$. 
We can extend this to the desired measure on $\R^n$ by considering an atomic measure 
with points  $t_i^{v} = (t_i^{v_1}, \hdots, t_i^{v_n})$ and the same weights. 
Then for any $\alpha\in \N^n$, the corresponding moment is 
\[
m_{\alpha} \ = \  \sum_{i=1}^d w_i \prod_{j=1}^n(t_i^{v_j})^{\alpha_j} \  =  \ \sum_{i=1}^d w_i t_i^{\langle v, \alpha \rangle}.
\]
By construction, this is a measure on $\R^n$ with $m_{\gamma}=0$ and $m_{\alpha}\neq 0$ for all other $\alpha$. 
\end{proof}

\begin{lemma}
\label{lem:mid}
 A function $h:A\rightarrow \mathbb{R}$ belongs to $\operatorname{trop} (M_A( \mathbb{R}^n))$ if and only if $h$ satisfies all inequalities $\sum_{a \in A} \lambda_a h(a) \geq 0$  valid on $\operatorname{trop} (M_A( \mathbb{R}_{\geq 0}^n))$ where $ \lambda_a > 0$ only for even points $a \in A$. 
\end{lemma}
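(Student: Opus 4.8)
The plan is to use the Hadamard property of $M_A(\R^n)$ to describe $\trop(M_A(\R^n))$ by pure binomial moment inequalities, and then to pass between $\R^n$ and $\R^n_{\geq 0}$ via coordinate-wise absolute values. First I would note that $\R^n$ is closed under Hadamard multiplication, so by \Cref{prop:had} the cone $M_A(\R^n)$ has the Hadamard property, and hence so does its image $|M_A(\R^n)|$ under coordinate-wise absolute value; since $\trop(M_A(\R^n)) = \trop(|M_A(\R^n)|)$ by the definition of tropicalization in \Cref{subsec:TropDefs}, and since the positive part $|M_A(\R^n)| \cap \R_{>0}^A$ is dense in $|M_A(\R^n)|$, \Cref{prop:TropConeLog}, \Cref{prop:binomIneq} and \Cref{rmk:pureBinomials} show that $\trop(M_A(\R^n))$ is a rational polyhedral cone whose dual cone is spanned by the coefficient vectors of the pure binomial inequalities $\prod_{a\in A_+}|m_a|^{p_a} \geq \prod_{a\in A_-}|m_a|^{q_a}$, with $A_+, A_- \subseteq A$ disjoint, that are valid on $M_A(\R^n)$. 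The analogous statement holds for $M_A(\R^n_{\geq 0})$ with the absolute values removed, and extending a measure on $\R^n_{\geq 0}$ by zero gives $\trop(M_A(\R^n_{\geq 0})) \subseteq \trop(M_A(\R^n))$.

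For the ``if'' direction I would show that every facet-defining inequality of $\trop(M_A(\R^n))$ already appears among the inequalities in the statement. Such a facet is the tropicalization of a pure binomial inequality $\prod_{a\in A_+}|m_a|^{p_a} \geq \prod_{a\in A_-}|m_a|^{q_a}$ valid on $M_A(\R^n)$, and by \Cref{lem:evenmoment} every point of $A_+$ is even. Restricting to measures supported on $\R^n_{\geq 0}$, this inequality holds there without absolute values, so its tropicalization is an inequality $\sum_{a\in A}\lambda_a h(a) \geq 0$ valid on $\trop(M_A(\R^n_{\geq 0}))$ in which $\lambda_a > 0$ only for even $a$. Hence any $h$ satisfying all the inequalities in the statement lies in every facet halfspace of $\trop(M_A(\R^n))$, and therefore in $\trop(M_A(\R^n))$.

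For the ``only if'' direction it suffices to check that each inequality in the statement is valid on all of $\trop(M_A(\R^n))$. After exponentiating, such an inequality corresponds to a pure binomial moment inequality $\prod_{a\in A_+}m_a^{p_a} \geq \prod_{a\in A_-}m_a^{q_a}$ valid on $M_A(\R^n_{\geq 0})$ with every point of $A_+$ even. Given any finite measure $\mu$ on $\R^n$ with finite $A$-moments, let $\nu$ be its pushforward under the coordinate-wise absolute value map; then $m_a(\nu) = m_a(\mu) \geq 0$ for even $a$, while $m_a(\nu) \geq |m_a(\mu)|$ for every $a$. Since $\nu$ is supported on $\R^n_{\geq 0}$,
\[
\prod_{a\in A_+} |m_a(\mu)|^{p_a} = \prod_{a\in A_+} m_a(\nu)^{p_a} \geq \prod_{a\in A_-} m_a(\nu)^{q_a} \geq \prod_{a\in A_-} |m_a(\mu)|^{q_a},
\]
so the absolute-value binomial inequality holds on $M_A(\R^n)$, and tropicalizing it gives the desired linear inequality on $\trop(M_A(\R^n))$.

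The main obstacle I anticipate is the bookkeeping in the first paragraph: checking that the dual cone of $\trop(M_A(\R^n))$ is \emph{precisely} spanned by pure binomial inequalities valid on $M_A(\R^n)$ (which uses that $M_A(\R^n)$ is semialgebraic and that the positive part of $|M_A(\R^n)|$ is dense, so that \Cref{rmk:pureBinomials} applies), together with a careful tropicalization of the absolute-value inequalities in the valued-field framework of \Cref{subsec:TropDefs}. The conceptual content is entirely in \Cref{lem:evenmoment} and the absolute-value pushforward; everything else is routine.
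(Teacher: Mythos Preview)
Your proposal is correct and follows essentially the same route as the paper: the ``if'' direction uses \Cref{lem:evenmoment} to see that every pure binomial inequality valid on $M_A(\R^n)$ has only even exponents on the greater side and is already valid on $M_A(\R^n_{\geq 0})$, and the ``only if'' direction compares a measure $\mu$ on $\R^n$ with a measure on $\R^n_{\geq 0}$ whose $a$-th moment agrees with that of $\mu$ for even $a$ and dominates $|m_a(\mu)|$ otherwise. Your pushforward $\nu$ under coordinate-wise absolute value is exactly the paper's measure $|\mu|(\mathcal{B}) = \sum_{g\in\{\pm1\}^n}\mu(g\mathcal{B})$, so the two arguments coincide.
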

\begin{proof}

We think in terms of pure binomial inequalities in absolute values of moments of measures. Suppose that a pure binomial inequality in absolute values of moments is valid on $M_A(\mathbb{R}^n)$. Then by Lemma \ref{lem:evenmoment} we know that this inequality only has even moments on the ``greater'' side. Moreover, this inequality must be valid for all measures supported on $\mathbb{R}^n$, and therefore also for all measures supported on $\mathbb{R}^n_{\geq 0}$. This gives us one inclusion.

Given a measure $\mu$ on $\mathbb{R}^n$ define a measure $|\mu|$ on $\mathbb{R}^n_{> 0}$ via $|\mu|(\mathcal{B})=\sum_{g \in \{\pm1\}^n} \mu(g\mathcal{B})$ for $\mathcal{B}\subseteq \R_{>0}^n$, where $g$ ranges over all possible coordinate sign changes.
We can also think of $|\mu|$ as a measure on all of $\mathbb{R}^n$ which gives zero weight to subsets outside of $\mathbb{R}^n_{>0}$. 

The $a$-th moment of $|\mu|$ is a sum of the absolute values 
of the $a$-th moment of the restriction of $\mu$ to each orthant. 
That is, $\int x^{a} \ d|\mu|$ equals $\sum_{g\in \{\pm1\}^n}|\int_{\R_g} x^{a} \ d\mu|$, where 
$\R_g$ denotes the orthant in $\R^n$ with sign pattern $g$. 
If $a$ is even, then each term is already positive and so the $a$-th moment of $\mu$ and $|\mu|$ agree. 
For arbitrary $a\in \N^n$, the $\alpha$-th moment of $\mu$ is upper bounded by that of $|\mu|$. 

Therefore if $\mu$ fails a pure binomial inequality in absolute values of moments, where all of the terms on the ``greater'' side are even, then $|\mu|$ will fail this inequality as well. This shows no additional pure binomial inequalities are valid on $M_A(\mathbb{R}^n)$ besides those that have all even terms on the ``greater'' side and are valid on $M_A(\mathbb{R}^n_{\geq 0})$.
\end{proof}

We can finish the proof of Theorem  \ref{thm:whole}.
\begin{proof}[Proof of Theorem \ref{thm:whole}]

Let $Y=\operatorname{trop} (M_{A}( \mathbb{R}^n))$ and $Z=\operatorname{trop} (M_{A}( \mathbb{R}^n_{\geq 0}))$. It follows from Lemma \ref{lem:mid} that $Y^\vee=Z^\vee\cap T$ where $T$ the set of vectors in $\mathbb{R}^{|A|}$ where coordinates with non-even indices are nonpositive. Since the cone $Y$ is tropically convex it follows from Proposition \ref{prop:tconvdual} that
$$Y^\vee= \sum_{\alpha \in A} (Y^\vee \cap U_\alpha),$$
where $\sum$ stands for Minkowski addition and $U_\alpha$ is the orthant of $\RR^{|A|}$ where the $\alpha$-indexed coordinate is nonpositive and the rest are nonnegative. Using the above centered equation on the $Y^\vee$ inside the Minkowski sum we see that
$$Y^\vee= \sum_{\alpha \in A} (Z^\vee \cap U_\alpha \cap T).$$
We observe that $U_\alpha\cap T$ is the set of vectors in $\RR^{|A|}$ where $\alpha$-indexed coordinate is nonpositive, all non-even indexed coordinates (except potentially $\alpha$) are zero, and all even-indexed coordinates (except potentially $\alpha$) are nonnegative. We see that  $U_\alpha\cap T$ is a face of $U_\alpha$, and therefore extreme rays of  $Z^\vee \cap U_\alpha \cap T$ are extreme rays of $Z^\vee \cap U_\alpha$. It follows from Lemma \ref{lem:emptysimplex} that the extreme rays of $Y^\vee$ come from almost-empty even simplices as desired.  \end{proof}

\begin{example}\label{exm:dm}
We consider the \emph{doubled Motzkin configuration} $\A=(0,0),(2,4),(4,2),(2,2)$ in the plane. By Theorem \ref{thm:whole} we see that
$\trop M_\A(\RR^n)$ consists of functions $h:\A\rightarrow \mathbb{R}$ satisfying:
$$h(0,0)+h(2,4)+h(4,2)\geq 3 h(2,2).$$
\end{example}

\subsection{Recovering The Moment Cone from Tropicalization} Although tropicalization is a quite forgetful operation in general, sometimes we can recover the moment cone from its tropicalization. If our configuration $A$ is an almost empty simplex then  the moment cone is defined by a single binomial inequality and thus completely determined by its tropicalization. See also \cite[Lemma~3.5]{DHNdW}

\begin{proposition}\label{prop:AMGM}
Let $A\subset \ZZ^n$ be an almost-empty simplex with vertices $v_0, \hdots, v_n$ and interior lattice point $w$. 
Write $w=\sum_{i\in I}\lambda_i v_i$ with $\lambda_i> 0$ for $i\in I$ and $\sum \lambda_i=1$. 
The moment cone $M_A(\R_{\geq 0}^n)$ equals the set of $(m_{v_0}, \hdots, m_{v_n}, m_w)\in \R_{\geq 0}^{n+2}$ 
for which $\prod_{i\in I}m_{v_i}^{\lambda_i}\geq m_w$. 
\end{proposition}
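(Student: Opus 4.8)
The plan is to prove a two-way inclusion between the claimed set $P := \{(m_{v_0},\dots,m_{v_n},m_w)\in\R_{\geq 0}^{n+2} : \prod_{i\in I}m_{v_i}^{\lambda_i}\geq m_w\}$ and the closed moment cone $M_A(\R_{\geq 0}^n)$. The inclusion $M_A(\R_{\geq 0}^n)\subseteq P$ is the easy direction: for a point $x\in\R_{\geq 0}^n$ the monomial identity $x^w = \prod_{i\in I}(x^{v_i})^{\lambda_i}$ holds, so for a single Dirac measure at $x$ the AM--GM (weighted) inequality is actually an equality. For a general measure $\mu$ supported on $\R_{\geq 0}^n$, the inequality $\prod_{i\in I}m_{v_i}^{\lambda_i}\geq m_w$ is exactly the content of Hölder's inequality applied with exponents $1/\lambda_i$: writing $m_w=\int \prod_i (x^{v_i})^{\lambda_i}\,d\mu$ and bounding by $\prod_i\big(\int x^{v_i}\,d\mu\big)^{\lambda_i}$. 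Since $P$ is closed, this shows $M_A(\R_{\geq 0}^n)\subseteq P$.

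For the reverse inclusion $P\subseteq M_A(\R_{\geq 0}^n)$, the first step is to verify that $P$ is a full-dimensional closed convex cone whose boundary (apart from the coordinate-hyperplane faces $m_{v_i}=0$) is precisely the hypersurface $\prod m_{v_i}^{\lambda_i}=m_w$. Then I would use the fact that $A$ is an almost-empty simplex, so $\trop(M_A(\R_{\geq 0}^n)) = \mathcal K_A$ (\Cref{prop:orthant}), which by \Cref{p:conesfacets} is the single halfspace $\sum_{i\in I}\lambda_i h(v_i)\geq h(w)$; equivalently, by \Cref{rmk:pureBinomials}, the \emph{only} pure binomial inequality valid on $M_A(\R_{\geq 0}^n)$ is $\prod_{i\in I}m_{v_i}^{\lambda_i}\geq m_w$. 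This already shows the two cones have the same tropicalization, but that alone does not force them to be equal. To upgrade, I would argue directly: take a point $(m_{v_0},\dots,m_{v_n},m_w)$ in the interior of $P$, i.e.\ with all $m_{v_i}>0$ and $\prod m_{v_i}^{\lambda_i}>m_w$. One can realize it as the moment vector of a measure of the form $\mu = \sum_j w_j\,\delta_{x_j}$ with $x_j\in\R_{>0}^n$: choose the $v_i$-moments first by putting appropriate atomic mass on the coordinate rays (or along a curve $t\mapsto(t^{c_1},\dots,t^{c_n})$ with $c$ chosen so that the linear forms $\langle v_i,c\rangle$ are distinct, reducing to a univariate moment problem on a segment of the moment curve), giving the prescribed $m_{v_i}$ and a resulting $m_w$ that equals the extremal value $\prod m_{v_i}^{\lambda_i}$ when the measure is a single atom; then perturb by spreading mass to strictly decrease $m_w$ down to the desired value. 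Because $M_A(\R_{\geq 0}^n)$ is convex and contains the extreme-case atomic measures as well as nearby spread-out ones, its interior contains the interior of $P$; taking closures gives $P\subseteq M_A(\R_{\geq 0}^n)$.

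A cleaner alternative for the reverse inclusion, which I would probably use to avoid fiddly atomic constructions, is to combine convexity with the tropicalization result via the Hadamard property. By \Cref{prop:had}, $M_A(\R_{\geq 0}^n)$ is a closed convex cone with the Hadamard property, and its tropicalization is the single halfspace above. The key point is that for a \emph{simplex} configuration the moment cone is the image of $\R_{\geq 0}^n$ under the monomial map $\varphi_A$ followed by taking the closed conical hull; since $A$ has $n+1$ vertices spanning affinely and one interior point, $\varphi_A(\R_{\geq 0}^n)$ is (the closure of) the graph of $m_w = \prod m_{v_i}^{\lambda_i}$ over the orthant in $(m_{v_0},\dots,m_{v_n})$-space, and one checks that the closed conical hull of this graph is exactly $P$ — this is the weighted AM--GM statement that the region below the "geometric-mean surface" is the convex hull of the surface together with the coordinate rays. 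I expect the main obstacle to be this last verification: showing that $\cone(\overline{\varphi_A(\R_{\geq 0}^n)}) = P$, i.e.\ that the convex hull of the multiplicative surface $\{m_w=\prod m_{v_i}^{\lambda_i}\}$ fills out all of $\{m_w\leq\prod m_{v_i}^{\lambda_i}\}$, which amounts to checking that every interior point of $P$ is a finite convex combination of points on the surface — precisely the AM--GM / SONC duality at the heart of \cite{Reznick89}. Once that is in hand, both inclusions are complete.
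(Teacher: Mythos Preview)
Your inclusion $M_A(\R_{\geq 0}^n)\subseteq P$ via H\"older's inequality is correct and pleasantly direct; the paper does not argue this way. Instead, the paper works entirely by convex duality: it first characterizes the cone $P_A(\R_{\geq 0}^n)$ of polynomials $\sum c_i x^{v_i}-c_w x^w$ nonnegative on the orthant as exactly those with $c_i\geq 0$ and $c_w\leq\prod_{i\in I}(c_i/\lambda_i)^{\lambda_i}$ (this is the weighted AM--GM inequality, with a limiting argument when $I\neq\{0,\dots,n\}$), and then computes the dual cone $M_A=P_A^\vee$ via a scaling/homogeneity trick: the moment cone is invariant under $(m_\alpha)\mapsto(m_\alpha x^\alpha)$ for $x\in\R_{>0}^n$, which reduces checking $\sum c_i m_{v_i}\geq c_w m_w$ for all admissible $(c,c_w)$ to the single inequality $\prod m_{v_i}^{\lambda_i}\geq m_w$. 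So both inclusions come out of one AM--GM computation on the primal side.

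Your reverse inclusion $P\subseteq M_A$ is where the proposal has a gap. First, a small correction: $\varphi_A(\R_{\geq 0}^n)$ is \emph{not} the full graph of $m_w=\prod m_{v_i}^{\lambda_i}$ over the orthant---it is only an $n$-dimensional piece of that $(n{+}1)$-dimensional surface (the map $x\mapsto(x^{v_0},\dots,x^{v_n})$ goes from $\R_{>0}^n$ into $\R_{>0}^{n+1}$ and cannot be onto). What \emph{is} the full graph is the set of \emph{scaled} point evaluations $\{c\cdot\varphi_A(x):c>0,\,x\in\R_{>0}^n\}$, since $[\mathbf{1}\mid V]$ is invertible by affine independence of the $v_i$. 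This is harmless for the conical hull, so the real issue is the step you flag as the ``main obstacle'': you identify that one must show the closed convex hull of the equality surface $\{m_w=\prod m_{v_i}^{\lambda_i}\}$ fills the region $\{m_w\leq\prod m_{v_i}^{\lambda_i}\}$, but you do not carry this out. Your atomic-measure sketch (realize the $m_{v_i}$ first, then ``spread mass to decrease $m_w$'') is plausible but not a proof as written. The paper sidesteps this entirely by dualizing: once $P_A$ is known exactly, $M_A=P_A^\vee$ falls out without any explicit atomic construction. If you want to finish your route, one clean way is to observe that for any positive $(m_{v_0},\dots,m_{v_n})$ the single-scaled-atom point $(m_{v_0},\dots,m_{v_n},\prod m_{v_i}^{\lambda_i})$ lies in $M_A$, and then produce a second point in $M_A$ with the same $m_{v_i}$ but $m_w=0$ (a measure supported on the coordinate hyperplanes $\{x_j=0\}$ with $w_j>0$); every point of $P$ is then a convex combination of these two.
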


\begin{proof}
The proof proceeds by convex duality and we first consider the cone of polynomials supported on $A$ that 
are nonnegative on $\R_{\geq 0}^n$. The inequality 
\begin{equation}\label{eq:amgm}c_0x^{v_0}+\dots+c_{n}x^{v_n}\geq c_wx^{w}\end{equation} 
holds for all $x\in \R^n_{\geq 0}$ if and only if 
$c_0, \hdots, c_n$ are nonnegative and $c_w \leq \prod_{i=0}^n \left(\frac{c_i}{\lambda_i}\right)^{\lambda_i}$. 
This can be understood via the arithmetic-geometric mean inequality (in short AM/GM), which states that 
for every $y\in \R_{\geq 0}^{n+1}$, 
$$\lambda_0y_0+\dots+\lambda_{n}y_{n}\geq y_0^{\lambda_0}\cdots y_{n}^{\lambda_{n}},$$
with equality if and only if all coordinates $y_i$ with $i\in I$ are equal. 
Letting $y_i=(c_i/\lambda_i)x^{v_i}$ for $i\in I$ and applying AM/GM gives that 
$$c_0x^{v_0}+\dots+c_{n}x^{v_n}\geq 
\sum_{i\in I} c_i x^{v_i}
\geq 
x^w \prod_{i\in I} \left(\frac{c_i}{\lambda_i}\right)^{\lambda_i} .$$
The second inequality is tight when all the $y_i$'s are equal, meaning  $(c_i/\lambda_i)x^{v_i} = (c_j/\lambda_j)x^{v_j}$ for all $i,j$. 
We can find such a point $x\in \R_{>0}^n$ by solving the system of $|I|-1\leq n$ affine-linear equations 
$\langle \log(x), v_i- v_j\rangle = \log(c_j/\lambda_j)-\log(c_i/\lambda_i)$. 
To see that the first inequality must also be tight even when $I\neq \{0,\hdots, n\}$, 
consider a vector $\alpha\in \R^n$ so that 
$\langle \alpha,  v_i\rangle = a$ for all $i$ with $\lambda_i>0$ and  $\langle \alpha,  v_j\rangle < a$ whenever $\lambda_j = 0$. 
Consider rescaling $x\in \R^n$ coordinate-wise by $t^{\alpha}$. 
As $t\to \infty$, the limit of $t^{-a}(c_0(t^{\alpha}\cdot x)^{v_0}+\dots+c_{n}(t^{\alpha}\cdot x)^{v_n})$ 
equals $\sum_{i\in I} c_i x^{v_i}$. 
On the other hand, since $\sum_{i}\lambda_i v_i = w$, we see that  $\langle \alpha,  w\rangle = a$,  
$t^{-a}(t^{\alpha}\cdot x)^w$ equals $t^{\langle \alpha,  w\rangle -a}x^w = x^w$. 
So the right hand side in invariant under this rescaling. Since the inequality must hold for all $t$, 
we see that the coefficient of $x^w$ cannot be improved.

Using this characterization lets us derive defining inequalities for the dual cone $M_A(\R_{\geq 0}^n)$ as follows. 
The dual cone $M_A(\R_{\geq 0}^n)$ is contained in $\R_{\geq 0}^{n+2}$, and by convex duality, is 
 the set of points $(m_{v_0}, \hdots, m_{v_n}, m_w)$ for which 
$\sum_{i=0}^n c_i m_{v_i} \geq c_w m_w$ for all polynomials $\sum_{i=0}^n c_i x^{v_i} \geq c_w x^w$
nonnegative on $\R_{\geq 0}^n$. 
By coordinate scaling, we see that a point $(m_{\alpha})_{\alpha \in A}$ belongs to 
 $M_A(\R_{\geq 0}^n)$ if and only if $(m_{\alpha}x^{\alpha})_{\alpha \in A}$ belongs to 
 $M_A(\R_{\geq 0}^n)$ for every $x\in \R_{>0}^n$. 
 
 Let $c_0, \hdots, c_n\in \R_{\geq 0}$ and $c_w = \prod_{i\in I} \left(\frac{c_i}{\lambda_i}\right)^{\lambda_i} $. 
 By the arguments above, $\sum_{i} c_i m_{v_i}x^{v_i} \geq c_w m_w x^w$ for all $x\in \R_{>0}^n$ 
 if and only if 
\[c_w m_w \leq \prod_{i\in I} \left(\frac{c_i m_{v_i}}{\lambda_i}\right)^{\lambda_i} = c_w \prod_{i\in I} m_{v_i}^{\lambda_i}. 
\]
This shows that $M_A(\R_{\geq 0}^n)$ equals the set of $(m_{\alpha})_{\alpha \in A}$ in 
$\R_{\geq 0}^{n+2}$ satisfying $m_w \leq \prod_{i\in I}^n m_{v_i}^{\lambda_i}$. 
\end{proof}

% !TEX root =  MAIN.tex

\section{Truncated pseudo-moment cones}\label{sec:tsos}
\label{sec:SOS}

The goal of this section is to understand the tropicalizations of the dual cones of cones of sums of squares (e.g.~$\Sigma_A^\vee$) and more generally, dual cones to truncations of preorders and quadratic modules. We observe an interesting phenomenon of stabilization, which we now explain informally. 

Let $S$ be a compact basic closed semialgebraic set defined by inequalities $g_i(x) \geq 0$, $i=1,\dots,k$. We can build a set of obviously nonnegative polynomials on $S$ by combining sums of squares and the known nonnegativity of polynomials $g_i$; this set is called the \emph{preordering generated by $\{g_i\}_{i=1}^k$}:
 \begin{equation}
     \label{eqn:preorder}
     \left.
    \po(g_1,\ldots,g_k) = \left\{ \sum_{\alpha\in \{0,1\}^k} \sigma_\alpha g_1^{\alpha_1}g_2^{\alpha_2}\cdots g_k^{\alpha_k} \right\rvert \sigma_\alpha \text{ is a sum of squares of polynomials for each } \alpha \right\}.
 \end{equation}
By Schm\"{u}dgen's Positivstellensatz, any polynomial $f$ strictly positive on $S$ belongs to the preordering \cite{MR1092173}. This representation often involves sums of squares of degree significantly larger than that of the positive polynomial $f$.  We can then consider the ``truncated'' preordering where the degrees of the $\sigma_\alpha$ are bounded by some integer $d$. 
As $d \rightarrow \infty$, the truncated preorderings fill up the interior of the cone of nonnegative polynomials. We call the dual cones to truncated preorderings \emph{the cones of pseudo-moments}. They provide a convergent outer approximation to the cone of $A$-moments supported on $S$, as we allow degree bounds to grow.

To tropicalize the pseudomoment cones we restrict ourselves to sets $S$ contained in the nonnegative orthant and defined by binomial inequalities. Tropicalization of $M_A(S)$ encodes all pure binomial inequalities in $A$-moments of measures supported on $S$. One would expect that, as the degree bounds grow, tropicalizations of pseudo-moment cones provide a convergent approximation to $\trop M_A(S)$. However, as we will show below, often this does not happen. In fact tropicalizations of pseudo-moment cones \emph{stabilize}, so after a finite number of steps we do not learn any new binomial inequalities in moments from tropicalizing pseudomoments, even as the degrees grow arbitrarily large. We discuss stabilization when $S$ is the unit cube in \Cref{sec:cube} and for more general sets $S$ in \Cref{sec:stab}.

We ask in \Cref{ques:stabilization} whether this finite stabilization phenomenon always occurs for all sets in the nonnegative orthant defined by binomial inequalities.

We begin with the simple case of globally nonnegative polynomials, where one cannot pick up more obviously nonnegative polynomials by increasing the degree, since we have no generators $g_i$ to cancel out high-degree terms.

\begin{example}\label{tropicalizingpseudomoments}
 Consider the cone 
  \[\Sigma_{n,2k} = \left\{ \sum_{i=1}^r f_i^2 \colon r\in \N, \, f_i\in \R[x_1,x_2,\ldots,x_n]_{\leq k} \right\}\]
  in the real vector space $\R[x_1,x_2,\ldots,x_n]_{2k}$ with the monomial basis and tropicalize its dual cone $\Sigma_{n,2k}^\vee \subset \R[x_1,x_2,\ldots,x_n]^*$ in the dual basis. This cone $\Sigma_{n,2k}^\vee$ consists of all linear functionals $\ell\in\R[x_1,x_2,\ldots,x_n]^*$ such that the Hankel matrix $\left(\ell(x^{\alpha+\beta})\right)_{|\alpha|, |\beta|\leq k}$ is positive semidefinite, see for instance \cite[Section 4.6]{SOCAG}. 

  This Hankel matrix is a matrix filled with variables $y_\delta$, the dual coordinates in $\R[x_1,x_2,\ldots,x_n]^*$ corresponding to the monomial $x^{\delta}$. So we can apply \cite[Theorem 4.4]{BRST} to conclude that the tropicalization is given by the tropicalizations of the $2\times 2$ minors of the Hankel matrix.  The off-diagonal entries may be negative, but this does not affect the tropicalization which is defined by taking absolute values.
  These tropicalizations are linear inequalities of the form 
  \[y_{2\alpha} + y_{2\beta} \geq 2 y_{\alpha+\beta}.\]
  In other words, writing $\Delta_n = \{\alpha\in \Z_{\geq 0}^n \colon \sum_{i=1}^n \alpha_i \leq 1\}$, the set $\trop(\Sigma_{n,2k}^\vee)$ consists of all functions $h\colon k\,\Delta_{n} \to \R$ that satisfy \emph{even midpoint convexity}, i.e.~$h(v) + h(w) \geq 2 h(\frac12(v+w))$ for all $v,w \in k \Delta_{n}$ with only even coordinates.

  More generally, if we consider $\Sigma_{A} = \{ \sigma = \sum_{i=1}^r f_i^2 \colon r\in \N, \, \supp(\sigma) \subset A \}$ for a finite set $A\subset \Z^n$, then its dual cone $\Sigma_A^\vee$ is the projection of $\Sigma_{n,2k}^\vee$ (where $k$ is chosen such that $A\subset k\,\Delta_{n}$) onto $A$-coordinates because $\Sigma_A$ is a linear section of $\Sigma_{n,2k}$. Since tropicalization commutes with coordinate projections in this case, the same holds for the tropicalizations, i.e., the tropicalization of $\Sigma_A^\vee$ is the projection of the tropicalization of $\Sigma_{n,2k}^\vee$ onto $A$. 
  So a function $h\colon A \to \R$ is contained in $\trop(\Sigma_A^\vee)$ if and only if it can be extended to a function $\tilde{h}\colon \conv(A)\cap \Z^n \to \R$ such that $\tilde{h}(v) + \tilde{h}(w) \geq 2 \tilde{h}(\frac12(v+w))$ for all $v,w \in \conv(A) \cap 2\Z^n$.  We can further extend $\tilde{h}$ to a function on $k\,\Delta_{n}$ satisfying the same property, by assigning to points in $k\,\Delta_{n} \setminus \conv(A)$ values of a convex function which is sufficiently larger than the values of $\tilde{h}$ on $\conv(A) \cap \Z^n$.

  For the doubled Motzkin configuration $A=\{(0,0),(2,4),(4,2),(2,2)\}$ in the plane from \Cref{exm:dm}, there are no midpoints in $A$, and therefore $\trop \Sigma^\vee_{A}$ is $\RR^4$. This shows that tropicalization detects the difference between the cones $\Sigma_A^\vee$ and $M_A(\mathbb{R}^n)$.
\end{example}

More generally, we want to consider pseudo-moments on semi-algebraic subsets of the nonnegative orthant $\R_{\geq 0}^n$. So rather than considering the cone sums of squares, we consider truncated quadratic modules or preorders. 
These are convex cones inside the infinite-dimensional vector space $\R[x_1,x_2,\ldots,x_n]$ and we truncate them (that is to say look at finite-dimensional versions) in order to tropicalize them. There are various ways to do this and we want to illustrate the general procedure with the following basic case, the nonnegative orthant itself.

\begin{example}\label{tropicalizationpseudononneg}
  The nonnegative orthant $\R_{\geq 0}$ is defined by the inequalities $x_i \geq 0$ for $i=1,2,\ldots,n$ so we consider the preordering $\po(x_1,\ldots,x_n)$ defined as in \eqref{eqn:preorder}. To get a finite-dimensional version of this cone, we restrict it to the space $\R[x_1,x_2,\ldots,x_n]_A$ of all polynomials with support in $A\subset \Z^n$, where $A$ is a finite set. For simplicity, let us also assume that $A = \conv(A)\cap \Z^n$. 

  In this case, the preorder has a nice property, namely that the intersection of $\po(x_1,\ldots,x_n)$ with the finite-dimensional subspace $\R[x_1,x_2,\ldots,x_n]_A$ consists of exactly those elements of the form $f = \sum_{\alpha \in \{0,1\}^n} \sigma_\alpha x^\alpha$, where the sum of squares $\sigma_\alpha$ satisfy the obviously sufficient property that $(\NP(\sigma_\alpha) + \sum_{i=1}^n \alpha_i e_i)\cap \Z^n \subset A$ (see for example \cite[Theorem 5.2]{zbMATH05598928}). This is also necessary here because each extreme term of $\sigma_\alpha$ has a positive
  coefficient (since it is a sum of squares of real polynomials) and the extreme term of $f$ in any direction $v$ is attained at one of the extreme terms of $\sigma_\alpha x^\alpha$. 

  Write $PO_A$ for the intersection of $\po(x_1,\ldots,x_n)$ and $\R[x_1,\ldots,x_n]_A$. Setting $x_i = z_i^2$ for new variables $z_i$, we can identify $PO_A$ with the cone of sums of squares in $\R[z_1,z_2,\ldots,z_n]_{A'}$, where $A'$ consists of all points $2\alpha$ with $\alpha \in A$. By the previous \Cref{tropicalizingpseudomoments}, we conclude that the tropicalization of the dual cone $PO_A^\vee$ is the set of functions $h\colon A \to \R$ that are midpoint-convex, i.e.~satisfy $h(v) + h(w) \geq 2 h(\frac{1}{2}(v+w))$ for all $v,w\in A$ such that $\frac{1}{2}(v+w) \in A$.

  In other words $\trop(PO_A^\vee )$ consists of secondary cones of $A$ corresponding to subdivisions in which for every $v,w,x \in A$ satisfying $2x=v+w$, if $v$ and $x$ are marked in the subdivision, then $w$ is also marked.
\end{example}

The following result shows that tropicalization can detect, in some special situations at least, if nonnegative polynomials are sums of squares (in a dual sense).
\begin{proposition}
Let $A\subset \Z_{\geq 0}^n$ be an almost empty simplex with vertices $v_0,\dots,v_{n}$ and interior point $w$ with $w=\lambda_0v_0+ \lambda_1 v_1 + \ldots + \lambda_{n} v_{n}$ with $\lambda_i\geq 0$ and $\sum_{i=0}^n \lambda_i = 1$.
The cone of $A$-moments $M_A(\RR^n_{\geq 0})$ of measures supported on $\R_{\geq 0}^n$ is equal to $PO_A^\vee$ as in \Cref{tropicalizationpseudononneg} if and only if their tropicalizations agree, i.e.~$\trop \left(M_A(\RR^n_{\geq 0})\right)= \trop \left(PO_A^\vee\right)$.
\end{proposition}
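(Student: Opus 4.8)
The plan is to prove the nontrivial implication, that equality of tropicalizations forces equality of the cones (the converse being immediate). The key leverage is Proposition~\ref{prop:AMGM}, which says that $M_A(\R^n_{\geq 0})$ is exactly the set of $(m_{v_0},\dots,m_{v_n},m_w)\in\R^{n+2}_{\geq 0}$ satisfying the single AM/GM inequality $\prod_{i\in I}m_{v_i}^{\lambda_i}\geq m_w$, where $I=\{i:\lambda_i>0\}$. Since we always have the inclusion $M_A(\R^n_{\geq 0})\subseteq PO_A^\vee$ (if $m$ is the moment vector of a measure $\mu$ on $\R^n_{\geq 0}$ and $p\in PO_A$, then $\int p\,d\mu\geq 0$ because $p\geq 0$ on $\R^n_{\geq 0}$, and $PO_A^\vee$ is closed), the whole problem reduces to showing two things about $PO_A^\vee$: that it is contained in the nonnegative orthant $\R^{n+2}_{\geq 0}$, and that it satisfies the AM/GM inequality above. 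The first of these is easy and unconditional; the second is where the hypothesis $\trop(M_A(\R^n_{\geq 0}))=\trop(PO_A^\vee)$ enters.

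First I would record the elementary facts. Every monomial $x^\gamma$ lies in the preorder $\po(x_1,\dots,x_n)$ (write $\gamma=2\beta+\delta$ with $\delta\in\{0,1\}^n$ and $\beta\in\Z^n_{\geq 0}$, so $x^\gamma=(x^\beta)^2x^\delta$), hence $x^{v_0},\dots,x^{v_n},x^w\in PO_A$ and therefore every $m\in PO_A^\vee$ has nonnegative coordinates; thus $PO_A^\vee\subseteq\R^{n+2}_{\geq 0}$. The $n+2$ monomials $x^{v_0},\dots,x^{v_n},x^w$ are distinct and so linearly independent, which makes $M_A(\R^n_{\geq 0})$ full-dimensional, hence $PO_A^\vee$ is a full-dimensional closed convex cone; in particular, being a full-dimensional cone contained in $\R^{n+2}_{\geq 0}$, its interior consists of strictly positive points and so $PO_A^\vee\cap\R^{n+2}_{>0}$ is dense in $PO_A^\vee$. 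Finally, $PO_A^\vee$ has the Hadamard property: the substitution $x_i=z_i^2$ identifies $PO_A$ with the cone of sums of squares supported on $2A=\{2\alpha:\alpha\in A\}$ (as in Example~\ref{tropicalizationpseudononneg}), so its dual $PO_A^\vee$ is identified, via the coordinate relabelling $m_\alpha\leftrightarrow y_{2\alpha}$ that intertwines coordinatewise Hadamard products, with the Hankel spectrahedron dual to that sum-of-squares cone (a coordinate projection of the PSD Hankel matrix cone), which is Hadamard-closed by the Schur product theorem (see Section~\ref{subsec:toricspecs} and Example~\ref{tropicalizingpseudomoments}).

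Now for the main step. Since $A$ is an almost-empty simplex, Corollary~\ref{prop:orthant} together with Proposition~\ref{p:conesfacets} identifies $\trop(M_A(\R^n_{\geq 0}))=\mathcal K_A$ as the halfspace of $\R^{n+2}$ defined by the single inequality $\sum_{i\in I}\lambda_i h(v_i)\geq h(w)$ (the only almost-empty simplex inside $A$ is $\{v_i:i\in I\}\cup\{w\}$); its dual cone is the ray $\R_{\geq 0}\,u$ with $u=\sum_{i\in I}\lambda_i e_{v_i}-e_w$. Under the hypothesis $\trop(PO_A^\vee)=\mathcal K_A$, Remark~\ref{rmk:pureBinomials} applies to $PO_A^\vee$ (it has the Hadamard property and its positive part is dense) and says that the pure binomial inequalities valid on $PO_A^\vee$ are exactly those with exponent vector in $\trop(PO_A^\vee)^\vee=\R_{\geq 0}\,u$. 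In particular the inequality corresponding to $u$, namely $\prod_{i\in I}m_{v_i}^{\lambda_i}\geq m_w$, is valid on the positive part of $PO_A^\vee$; since the subset of $\R^{n+2}_{\geq 0}$ it defines is closed (the function $\prod_{i\in I}m_{v_i}^{\lambda_i}$ is continuous and vanishes whenever some $m_{v_i}=0$) and the positive part is dense, it holds on all of $PO_A^\vee$. Combining this with $PO_A^\vee\subseteq\R^{n+2}_{\geq 0}$ and Proposition~\ref{prop:AMGM} gives $PO_A^\vee\subseteq M_A(\R^n_{\geq 0})$, and hence equality.

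The step I expect to require the most care is the interplay between the closed orthant and its strict interior. Remark~\ref{rmk:pureBinomials} and the identification of $\trop(\,\cdot\,)^\vee$ with the valid pure binomial inequalities are stated for subsets of the open orthant, whereas $M_A(\R^n_{\geq 0})$ and $PO_A^\vee$ naturally live in the closed orthant and may well meet its boundary. The resolution is the full-dimensionality remark above, which guarantees that the positive part of $PO_A^\vee$ is dense so that tropicalization is unaffected by restricting to it, together with the observation that the AM/GM condition is closed on $\R^{n+2}_{\geq 0}$ so that it propagates from the dense positive part to the whole cone. A secondary, more routine point is checking that the relabelling $m_\alpha\leftrightarrow y_{2\alpha}$ genuinely transports the Hadamard property from the Hankel spectrahedron to $PO_A^\vee$, which is immediate once one writes it out.
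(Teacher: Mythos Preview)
Your proof is correct and follows essentially the same route as the paper: use Proposition~\ref{prop:AMGM} to identify $M_A(\R^n_{\geq 0})$ by a single binomial inequality, note that its tropicalization is the corresponding linear halfspace, and then invoke Propositions~\ref{prop:TropConeLog} and~\ref{prop:binomIneq} (via the Hadamard property of $PO_A^\vee$) to lift the linear inequality back to the binomial one on $PO_A^\vee$. You are more careful than the paper in spelling out the auxiliary facts---that $PO_A^\vee\subseteq\R^{n+2}_{\geq 0}$, that it is full-dimensional so its positive part is dense, that the Hadamard property transfers via the $x_i=z_i^2$ identification, and that the AM/GM condition is closed on the nonnegative orthant---all of which the paper leaves implicit.
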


\begin{proof}
The ``only if'' direction is clear, and we need to show that equality of tropicalizations implies equality of the actual cones. 

From \Cref{prop:AMGM}, the moment cone $M_A(\RR^n_{\geq 0})$ is defined by the inequality $\prod_{i\in I}m_{v_i}^{\lambda_i} \geq m_w$ where $I  = \{i: \lambda_i>0\}$. 
Its tropicalization  $\trop \left(M_A(\RR^n_{\geq 0})\right)$ is then given by the single  
linear inequality $\sum_{i\in I} \lambda_i y_{v_{i}} \geq y_w$. 
If $\trop \left(M_A(\RR^n_{\geq 0})\right)= \trop \left(PO_A^\vee\right)$, then this linear inequality also holds on  $\trop \left(PO_A^\vee\right)$. 
By Propositions~\ref{prop:binomIneq} and \ref{prop:TropConeLog}, it follows that $\prod_{i\in I}m_{v_i}^{\lambda_i} \geq m_w$ holds on $PO_A^\vee$. 
Therefore  $PO_A^\vee \subseteq M_A(\RR^n_{\geq 0})$. The other containment is immediate. 
\end{proof}

A difference in tropicalizations directly implies that the original cones are different, as in the following example.
\begin{example}
  We revisit the Motzkin configuration $A$ in the plane of \Cref{exm_motz1}. In this case there are no midpoints in $A$, and therefore $\trop PO^\vee_{A}$ (with the notation as in the previous example) is $\RR^4$. However, the tropicalization of the moment cone is a half-space. The difference between $\trop PO_A^\vee$ and $\trop M_A(\R_{\geq 0}^2)$ of course implies that $PO_A^\vee \neq M_A(\R_{\geq 0}^2)$ and dually, that there are nonnegative polynomials on $\R_{\geq 0}^2$ that are not in $PO_A$.
\end{example}

However, it can happen that the tropicalizations of truncated moment cones and truncated pseudo-moment cones are equal even though the original cones are different.
\begin{example}
  Let $A\subset \Z_{\geq 0}^n$ be the collection of nonnegative integer points with total sum of coordinates equal to $2$. The cone $P_A(\RR^n_{\geq 0})$ of quadratic forms in $n$ variables that are nonnegative on the nonnegative orthant is known as the cone of \textit{copositive quadratic forms}. The cone $PO_A$ (with the notation of \Cref{tropicalizationpseudononneg}) is the cone of quadratic forms that can be written as a sum of a positive semidefinite quadratic form and a form with nonnegative coefficients. It is known that for $n \geq 5$ we have strict containment $PO_A \subsetneq P_A(\RR^n_{\geq 0})$ (see \cite{copositive2}). Therefore we also have strict containment of the duals: $M_A(\RR^n_{\geq 0})  \subsetneq PO^\vee_{A}$. However, all convexity relations in $A$ are midpoint relations, so we have $\trop PO^\vee_{A}=\trop M_A(\RR^n_{\geq 0})$.
\end{example}

Truncation will often not have the nice properties that we see above on the nonnegative orthant, and we need to discuss truncation in general. Since our focus is on subsets of the nonnegative orthant, we restrict our attention to quadratic modules and preorders that are closed under multiplication by monomials. 
Let $g_1,g_2,\ldots,g_r$ be fixed polynomials in $\R[x_1,x_2,\ldots,x_n]$ (that we usually assume to be binomials or even pure binomials). We fix the following notation: Set $g_0 = 1$ and let $\Sigma^2$ denote the cone of sums of squares 
in $\R[x_1, \hdots, x_n]$. We define the truncated quadratic module and preorder generated by $g_1, \hdots, g_r$ to be 
\begin{align*}
  {\rm QM}_d(g_1,\ldots,g_r) & = \left\{ \sum_{J\subseteq[n], i\in \{0\}\cup [r]} \sigma_{J,i} x^Jg_i : \sigma_{J,i} \in \Sigma^2, \deg(\sigma_{J,i} x^Jg_i)\leq d \text{ for all }J, i \right\}, \text{ and }  \\
  {\rm PO}_d(g_1,\ldots,g_r) & = \left\{ \sum_{J\subseteq[n], I\subseteq [r]} \sigma_{J,I} x^Jg^I : \sigma_{J,I} \in \Sigma^2, \deg(\sigma_{J,I} x^Jg^I)\leq d \text{ for all }J, I\right\},
\end{align*}
respectively, where $g^I$ denotes $\prod_{i\in I}g_i$ with the convention that $g^{\emptyset} = 1$. Moreover, we slightly abuse notation and identify a subset $J$ with its indicator vector in $\{0,1\}^n$. 

The dual cones are defined in terms of (pseudo-)moments and {\bf localizing matrices} as follows.
For degree $d\in \N$, linear function $\ell \in \R[x_1,x_2,\ldots,x_n]_d^*$, and polynomial 
$f\in \R[x_1, \hdots, x_n]$ 
with degree $\leq d-k$, 
we define the symmetric matrix 
\[\M{k}{f}[\ell] = (\ell(x^{\gamma + \delta}f))_{|\gamma|, |\delta| \leq k}\]
 whose rows and columns are indexed by the monomials $x^\gamma, x^\delta$ of degree at most $k$, ordered lexicographically.    
 To simplify notation, here we write $\M{}{J,i}[\ell] = \M{d(J,i)}{x^J g_i}[\ell]$ and $\M{}{J,I}[\ell] = \M{d(J,I)}{x^J g^I}[\ell]$, where $d(J,i)=\lfloor \frac{d - \deg(x^Jg_i)}{2} \rfloor $ and $d(J,I)=\lfloor \frac{d - \deg(x^Jg^I)}{2} \rfloor $, when the choice of $d$ is clear from context.
 
Then the dual cones of the truncated quadratic module and preorder above are given by positivity conditions on the localizing matrices:
\begin{align*}
  {\rm QM}_d(g_1,g_2,\ldots,g_r)^\vee &= \left\{ \ell \in \R[x_1,x_2,\ldots,x_n]_d^* \colon 
  \M{}{J,i}[\ell] \succeq 0 \text{ for all }J, i \right \}\\
  {\rm PO}_d(g_1,g_2,\ldots,g_r)^\vee &= \left\{ \ell \in\R[x_1,x_2,\ldots,x_n]_d^* \colon \M{}{J,I}[\ell]\succeq 0 \text{ for all }J, I\right\}.
\end{align*}

\subsection{The pure binomial case}\label{subsec:pseudoPureBinomial}
We work with semi-algebraic subsets $S\subset \R_{\geq 0}^n$ of the nonnegative orthant defined by pure binomial inequalities $x^a - x^b \geq 0$. 
For pure binomials, we have \[\M{k}{x^a - x^b}[\ell] = \M{k}{x^{a}}[\ell] - \M{k}{x^{b}}[\ell]\] This is a matrix of the form $A - B$ as in \Cref{prop:binom}, where the entries of $A = \M{k}{x^{a}}[\ell]$ and 
$B = \M{k}{x^{b}}[\ell]$ are variables corresponding to the coordinates $(\ell(x^{\gamma}))_{|\gamma|<d}$ of the linear functional~$\ell$. We use this to show that the cone ${\rm QM}_k(g_1,g_2,\ldots,g_r)^\vee$ has the Hadamard property
when the polynomials $g_i$ are pure binomials. 

\begin{lemma}\label{lem:QM_Hadamard}
Let $g_1, \hdots, g_r$ be pure binomials and consider the semi-algebraic set $S\subset\R^n$ defined by $x_i \geq 0$ and $g_j\geq 0$. 
If $S$ is full-dimensional, then ${\rm QM}_d(g_1, \hdots, g_r)^\vee$ has non-empty interior and the Hadamard property. 
\end{lemma}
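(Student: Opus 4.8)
The plan is to prove the two assertions separately, both reducing to earlier results in the excerpt. The key structural observation is that $\qm_d(g_1,\ldots,g_r)^\vee$ is cut out by positive-semidefiniteness conditions on the localizing matrices $M_{d(J,i),\ell}(x^J g_i)$, and since each $g_i = x^{a_i} - x^{b_i}$ is a pure binomial, each such localizing matrix has the form $A(\ell) - B(\ell)$ where $A(\ell) = M_{k,\ell}(x^{J+a_i})$ and $B(\ell) = M_{k,\ell}(x^{J+b_i})$ are matrices whose entries are (single) coordinates $\ell(x^\gamma)$, i.e.\ monomials of degree one in the variables $(\ell(x^\gamma))_{|\gamma|\le d}$. (For $i=0$ we get the single matrix $M_{d(J,0),\ell}(x^J)$ with $B=0$.) So the whole cone is an intersection of spectrahedral conditions of exactly the type analysed in \Cref{prop:binom} and \Cref{lem:manyBinom}.

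For the Hadamard property: I would invoke \Cref{prop:binom}(1) (or directly the Schur-product computation behind it) for each pair $(J,i)$. Concretely, if $\ell, \ell'$ lie in $\qm_d(g_1,\ldots,g_r)^\vee$, I want to show their Hadamard product $\ell\circ\ell'$ (the functional with $(\ell\circ\ell')(x^\gamma) = \ell(x^\gamma)\,\ell'(x^\gamma)$) again lies in the cone. The point is that the localizing matrix is multiplicative under Hadamard product of functionals on the nose: $M_{k,\ell\circ\ell'}(x^{J+c}) = M_{k,\ell}(x^{J+c})\circ M_{k,\ell'}(x^{J+c})$, because the $(\gamma,\delta)$ entry is $(\ell\circ\ell')(x^{\gamma+\delta+J+c}) = \ell(x^{\gamma+\delta+J+c})\ell'(x^{\gamma+\delta+J+c})$. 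Then for each $(J,i)$ the identity
\[
A(\ell\circ\ell') - B(\ell\circ\ell') \;=\; A(\ell)\circ\big(A(\ell') - B(\ell')\big) + \big(A(\ell) - B(\ell)\big)\circ B(\ell')
\]
from the proof of \Cref{prop:binom}(1) shows $M_{d(J,i),\ell\circ\ell'}(x^J g_i) \succeq 0$, since each summand is a Hadamard product of two PSD matrices (and $M_{d(J,i),\ell}(x^{J+a_i})\succeq M_{d(J,i),\ell}(x^{J+b_i})\succeq 0$ because $g_i\ge 0$ on $S$ forces $\ell$ to make $M(x^Jg_i)\succeq 0$, while $M(x^Jg_i) = M(x^{J+a_i}) - M(x^{J+b_i})$ and $M(x^{J+b_i}) = M_{d(J,i),\ell}(x^{J+b_i})$ is a principal-submatrix-type moment matrix, hence PSD for $\ell\in\qm_d^\vee$ — using that $x^{J+b_i}$ is itself a monomial handled by the $i=0$, $J'=J\cup\operatorname{supp}(b_i)$-type condition, or more carefully noting $x^{J}g_0 = x^J$ and $x^{J+b_i}$ arise among the generators). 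Thus $\ell\circ\ell'\in\qm_d(g_1,\ldots,g_r)^\vee$.

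For non-empty interior: I would use \Cref{lem:manyBinom} together with the hypothesis that $S$ is full-dimensional, hence $\trop(S)$ has non-empty interior in $\R^n$ (by \Cref{prop:TropConeLog}, $\trop(S) = \overline{\cone(\log S)}$, which is full-dimensional when $S$ is). Pick $y$ in the interior of $\trop(S)$, and consider the functional $\ell_{t}$ defined on monomials by $\ell_t(x^\gamma) = t^{\langle y,\gamma\rangle}$ — i.e.\ the evaluation at the point $x = t^y = (t^{y_1},\ldots,t^{y_n})$, up to normalization. Then each localizing matrix $M_{d(J,i),\ell_t}(x^J g_i)$ is exactly $A(t^{y'})$ in the notation of \Cref{lem:manyBinom} with $A_0$ corresponding to $x^{J+a_i}$ and $A_1$ to $x^{J+b_i}$ and $c_1 = -1$: its entries are powers of $t$, and the tropical inequalities needed (diagonal of $A_0 - A_1$ strictly dominant, and $2\times 2$ minors of each monomial matrix, which are automatically satisfied with equality since all entries come from a single point evaluation, so one needs a generic perturbation of $\ell_t$ — here one instead takes $\ell$ to be the evaluation at a \emph{generic} point $x\in S\cap\R_{>0}^n$ close to $t^y$, which makes $M(x^Jg_i)$ strictly positive definite since $g_i(x) > 0$ and $x$ is generic). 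By \Cref{lem:manyBinom} (applied with $y$ in the interior of $\trop(S)$ so that $y\cdot\alpha_{ii}^0 > y\cdot\alpha_{ii}^k$ holds, i.e.\ $\langle y, a_i\rangle > \langle y, b_i\rangle$ strictly, which is exactly $y$ strictly inside $\trop(S)$), for large $t$ all these localizing matrices are positive definite, so $\ell_t$ lies in the interior of $\qm_d(g_1,\ldots,g_r)^\vee$.

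The main obstacle I anticipate is bookkeeping, not conceptual: one must carefully match the localizing-matrix conditions defining $\qm_d^\vee$ (with the various shifts $x^J g_i$ and the specific truncation degrees $d(J,i) = \lfloor (d - \deg(x^Jg_i))/2\rfloor$) to the hypotheses of \Cref{prop:binom} and \Cref{lem:manyBinom}, and in particular to handle the $2\times 2$ principal minors of the monomial matrices being identically degenerate (every such minor of a rank-one-type moment matrix vanishes), which is why one should work with a generic positive point of $S$ rather than a single torus orbit; the genericity argument, ensuring simultaneously that $g_i(x) > 0$ for all $i$ and that the relevant moment matrices are positive \emph{definite}, is the step requiring the most care.
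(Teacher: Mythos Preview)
Your argument for the Hadamard property is essentially the same as the paper's, though the paper is more careful about one point you leave fuzzy: to apply the identity from \Cref{prop:binom}(1) you need $M_{k,\ell}(x^{J+a_i})\succeq 0$ and $M_{k,\ell}(x^{J+b_i})\succeq 0$, and these are \emph{not} directly among the defining constraints of $\qm_d^\vee$ since $J+a_i$ and $J+b_i$ need not be squarefree. The paper handles this by writing $J+a_i = K + 2\alpha'$ with $K\in\{0,1\}^n$, so that $M_{k,\ell}(x^{J+a_i})$ is a principal submatrix of the localizing matrix $M_{m,\ell}(x^K)$ of a squarefree monomial (which \emph{is} required to be PSD), and similarly for $J+b_i$. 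Your parenthetical ``$J' = J\cup\operatorname{supp}(b_i)$-type condition'' gestures at this but is not quite the right decomposition.

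For non-empty interior, however, there is a genuine gap, and the paper takes a completely different (and much shorter) route. Your proposed functional $\ell_t(x^\gamma) = t^{\langle y,\gamma\rangle}$ is point evaluation at $t^y$, and you correctly note that its moment matrices $M_{k,\ell_t}(x^J)$ have all $2\times 2$ minors equal to zero. But your proposed fix---replace $t^y$ by a \emph{generic} point $x\in S\cap\R_{>0}^n$---does not help: evaluation at \emph{any} single point $x$ gives $M_{k,\ell_x}(x^J) = x^J\cdot (x^{\gamma+\delta})_{\gamma,\delta} = x^J\cdot vv^T$ with $v_\gamma = x^\gamma$, which is rank one regardless of how generic $x$ is. So $M_{k,\ell_x}(x^Jg_i) = g_i(x)\cdot x^J\cdot vv^T$ is never positive definite once the matrix is $2\times 2$ or larger, and you cannot land in the interior this way. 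A repair would require a functional coming from a measure with enough support (so that the moment matrices are positive definite), but you never say this. The paper sidesteps all of this with a one-line duality argument: since $S$ is full-dimensional, no nonzero polynomial is both nonnegative and nonpositive on $S$, so the cone of polynomials nonnegative on $S$ is pointed; $\qm_d$ sits inside it and is therefore also pointed; hence $\qm_d^\vee$ is full-dimensional.
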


\begin{proof}
If $S$ is full-dimensional, then there can be no polynomial that is both nonnegative and nonpositive on $S$. 
That is, the convex cone of polynomials in $\R[x_1, \hdots, x_n]_d$ that are nonnegative on $S$ is pointed. 
Since ${\rm QM}_d(g_1, \hdots, g_r)$ is a subset of this cone, it is also pointed, implying that its dual cone is full dimensional. 

To see that ${\rm QM}_d(g_1, \hdots, g_r)^\vee$ has the Hadamard property, 
we first consider the case of a single binomial ($r=1$), say $g = x^{a} - x^{b}$. 
The cone ${\rm QM}_d(g)^\vee$ is defined by the positive semidefiniteness of matrices 
of the form $\M{}{J,\emptyset}\succeq 0$ and $\M{}{J,1}\succeq 0$.

For the second type of inequalities, fix $J\subseteq [n]$ and $k=d(J,1)$. We can write \[\M{k}{x^Jg}= \M{k}{x^{J+a}} - \M{k}{x^{J+b}}.\]
The two matrices on the right hand side are principal submatrices of the localizing matrices of squarefree monomials. 
To be precise write $J + a = K+ 2\alpha'$ and $J + b = L + 2\beta'$ where $K,L\subseteq [n]$ and $\alpha', \beta'\in \Z_{\geq 0}^n$. 
Then $\M{k}{x^{J+a}} = \M{k}{x^{K+2\alpha'}}$ is a principal submatrix of  $\M{m}{x^{K}}$
for $m \geq k +|\alpha'|$. Similarly $\M{k}{x^{J+b}} = \M{k}{x^{L+2\beta'}}$ is a principal submatrix of  $\M{m}{x^{L}}$ for $m \geq k +|\beta'|$.   

Consider the spectrahedron $\mathcal{S}_J$ in $\R[x_1, \hdots, x_n]_d^*$ 
defined by the conditions $\M{}{K,\emptyset}\succeq 0$, $\M{}{L,\emptyset}\succeq 0$, and 
$\M{}{J,1} \succeq 0$. These constraints have the form $A\succeq 0$, $B\succeq 0$, and $A_{I} - B_{I'}\succeq 0$, 
where $A_{I}$, $B_{I'}$ are principal submatrices of $A$ and $B$, respectively. 
By \Cref{prop:binom}(1), the spectrahedron $\mathcal{S}_J$ has the Hadamard property. 
Since ${\rm QM}_d(g)^\vee$ is the intersection of $\mathcal{S}_J$ over all $J\subseteq [n]$, we see that 
 ${\rm QM}_d(g)^\vee$ has the Hadamard property as well. 

Similarly, for $r>1$, we can write ${\rm QM}_d(g_1, \hdots, g_r)^\vee$ as the intersection of ${\rm QM}_d(g_i)^{\vee}$ 
for $i=1, \hdots, r$. Since each individual set has the Hadamard property, so does their intersection. 

All together, we have that ${\rm QM}_d(g_1, \hdots, g_r)^\vee$ is a full-dimensional set in $\R[x_1, \hdots, x_n]_d^*$ 
with the Hadamard property.
\end{proof}

We now determine tropicalization of truncated quadratic modules for the case where all polynomials $g_i$ are pure binomials. 

\begin{theorem}\label{thm:genpseudomom}
Let $g_1, \hdots, g_r$ be pure binomials and consider the semi-algebraic set $S\subset\R^n$ defined by $x_i \geq 0$ and $g_j\geq 0$. 
  Assume $S$ is full dimensional, $S \subset \overline{S\cap \R_{>0}^n}$, and that the vectors $w_i = a_i - b_i$, where $g_i = x^{a_i}-x^{b_i}$, generate the semigroup $N = \trop(S)^\vee \cap \Z^n$. 
  Then, for any integer $d \geq 0$ the tropicalization of $\qm_d(g_1,\dots,g_r)^\vee$ is the rational polyhedral cone given by the following inequalities: 
  \begin{enumerate}
	  \item $h(u_1) + h(u_2) \geq 2 h(v)$ for all $u_1,u_2,v$ such that $|u_i| \leq d$, $|v|\leq d$ and $u_1 + u_2 = 2v$;
	  \item $h(u) \geq h(v)$ whenever $|u|\leq d$, $|v|\leq d$, and $u-v\in \trop(S)^\vee$. 
  \end{enumerate}
\end{theorem}
 
%{\color{blue} Greg: I think that this is just a question of what do we mean by tropicalization if the assumption $S \subset \overline{S\cap \R_{>0}^n}$ is not satisfied? Secretly we only tropicalize subsets of the positive orthant...   For referee's point, I think we need $d$ to be large enough to incorporate all the defining binomials. This can probably be made more precise? But for small $d$ and large degree defining binomials, I think the statement fails, because we use $\trop(S)^\vee$ which carries the information on large degree defining equations.}

\begin{proof}
As seen above, the quadratic module $\qm_d(g_1,\dots,g_r)^\vee$ is cut out by  inequalities of the form  \[\M{}{J,\emptyset}\succeq 0,~ \text{ and }\M{}{J,i} \succeq 0\] for all $i=1,\dots,r$, $J\in \{0,1\}^n$.
By \Cref{lem:QM_Hadamard}, the toric spectrahedron $\qm_d(g_1,\dots,g_r)^\vee$ has non-empty interior and is full-dimensional, so we can apply \Cref{prop:binom}, which gives two types of linear inequalities defining the tropicalization  $\trop\qm_d(g_1,\dots,g_r)^\vee$. 

The first inequality type comes from the tropicalization of the $2\times 2$ minors of the localizing matrices $\M{}{J,\emptyset}$ of monomials.  For $J \in \{0,1\}^n$, the tropicalization of the $2\times 2$ minor indexed by the exponent vectors $\alpha_1$ and $\alpha_2$ gives the inequality
  \[
  h(2\alpha_1+J)+h(2\alpha_2+J)\geq 2 h\left(\frac{\alpha_1+\alpha_2}{2}+J\right).
  \]
  As we run through all combinations of $\alpha_1, \alpha_2$, and $J$, we get inequalities of type (1).  The $2\times 2$ minors of $\M{d(J,i)}{x^{a_i+J}}$ and $\M{d(J,i)}{x^{b_i+J}}$ give a subset of these inequalities, so we do not need to consider them separately.
  
The second inequality type comes from tropicalizing the diagonal entries of the localizing matrices \[\M{}{J,i}=\M{d(J,i)}{x^{a_i+J}} - \M{d(J,i)}{x^{b_i+J}}.\]  We claim that they are  the inequalities $h(u) \geq h(v)$ for $u-v\in \trop(S)^\vee \cap \Z_{\geq 0}^n$.  
Since the vectors $w_i = a_i - b_i$ generate the semigroup $\trop(S)^\vee \cap \Z_{\geq 0}^n$ by assumption, we can write  $u-v = \sum_j \lambda_j w_j$ where each $\lambda_j\in \ZZ_{\geq 0}$. Without loss of generality, assume that $\lambda_1 > 0$ so that $u - v = w_1 + \sum_j \lambda_j' w_j$ for $\lambda_j' \in \Z_{\geq 0}^n$. Write the vector $\sum_j \lambda_j' w_j$ as $2\alpha + J$ with $\alpha \in \Z_{\geq 0}^n$ and $J\in \{0,1\}^n$. The desired inequality is the tropicalization of the diagonal entry of $\M{}{J,1}$ indexed by $\alpha$ where $g_1 = x^{a_1} - x^{b_1}$ is the binomial corresponding to the exponent vector $w_1 = a_1 - b_1$. 
\end{proof}

The assumption that the vectors $v_i = a_i - b_i$ coming from the defining binomials of $S$ generate the semigroup $\trop(S)^\vee \cap \Z^n$ is not a strong restriction in the sense that we can add redundant binomial inequalities to a chosen inequality description of $S$ so that the assumption is satisfied, as we now show below.
\begin{proposition}
  Let $S\subset \R_{\geq 0}$ be a semi-algebraic set defined by pure binomial inequalities $g_i = x^{a_i} - x^{b_i}$ with the property that $S \subset \overline{S\cap \R_{>0}^n}$.
  By adding valid binomial inequalities to the description of $S$ as a semi-algebraic set, we can assume that the exponent vectors $a_i - b_i$ of the binomials generate the semigroup $\trop(S)^\vee \cap \Z^n$.
\end{proposition}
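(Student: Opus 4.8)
The plan is to exploit two standard facts: that the lattice points of a rational polyhedral cone form a finitely generated semigroup (Gordan's lemma), and that the lattice points of $\trop(S)^\vee$ are exactly the exponent vectors of valid pure binomial inequalities on $S$, which is essentially \Cref{rmk:pureBinomials}. Since adding valid inequalities to a semialgebraic description changes neither $S$ nor $\trop(S)$, it then suffices to throw in one pure binomial for each semigroup generator of $N = \trop(S)^\vee \cap \Z^n$.

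First I would set up the dictionary between lattice points of $\trop(S)^\vee$ and pure binomials. By \Cref{lem:hadamardForS} the set $S$ has the Hadamard property, and since $S \subset \overline{S \cap \R_{>0}^n}$ we have $\trop(S) = \trop(S \cap \R_{>0}^n)$; moreover $S \cap \R_{>0}^n$ also has the Hadamard property (the Hadamard product of two positive points in $S$ lies in $S$ and is again positive). Applying \Cref{prop:TropConeLog} and \Cref{prop:binomIneq} to $S \cap \R_{>0}^n$, as in \Cref{rmk:pureBinomials}, we get that for any $\alpha \in \Z^n$ written as $\alpha = \alpha_+ - \alpha_-$ with $\alpha_\pm \in \Z_{\geq 0}^n$ of disjoint support, the inequality $x^{\alpha_+} \geq x^{\alpha_-}$ holds on $S \cap \R_{>0}^n$ — and hence, by continuity of monomials together with the density hypothesis, on all of $S$ — if and only if $\alpha \in \trop(S)^\vee$. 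In particular each $v_i = a_i - b_i$ lies in $N := \trop(S)^\vee \cap \Z^n$, since $g_i \geq 0$ on $S$.

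Next, since $\trop(S)^\vee$ is a rational polyhedral cone, Gordan's lemma provides finitely many lattice vectors $w_1, \dots, w_m$ generating the semigroup $N$. For each $j$ write $w_j = (w_j)_+ - (w_j)_-$ with $(w_j)_\pm \in \Z_{\geq 0}^n$ of disjoint support, and put $h_j := x^{(w_j)_+} - x^{(w_j)_-}$. By the dictionary above, each $h_j$ is nonnegative on $S$, so $\{g_1, \dots, g_r, h_1, \dots, h_m\}$ defines the same set $S$ (hence the same $\trop(S)$, which depends only on $S$). The exponent vectors of this enlarged system are $v_1, \dots, v_r, w_1, \dots, w_m$; they all lie in $N$, so the semigroup they generate is contained in $N$, and it contains $N$ because $w_1, \dots, w_m$ already generate $N$. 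Thus the exponent vectors of the enlarged description generate exactly $\trop(S)^\vee \cap \Z^n$, as desired.

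The only delicate point — and the closest thing to an obstacle — is the transition between $\R_{\geq 0}^n$ and $\R_{>0}^n$ needed to invoke \Cref{rmk:pureBinomials}, together with the verification that a pure binomial nonnegative on $S \cap \R_{>0}^n$ remains nonnegative on its closure $S$; both are immediate from continuity and the standing hypothesis $S \subset \overline{S \cap \R_{>0}^n}$. Everything else is a routine application of Gordan's lemma, so no genuinely hard step is expected.
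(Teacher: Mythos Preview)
Your proof is correct and follows essentially the same approach as the paper: both arguments hinge on the dictionary between lattice points of $\trop(S)^\vee$ and valid pure binomial inequalities on $S$ (via \Cref{prop:TropConeLog}, \Cref{prop:binomIneq}, and \Cref{rmk:pureBinomials}) together with Gordan's lemma to pick a finite semigroup generating set of $N = \trop(S)^\vee \cap \Z^n$. The only organisational difference is that the paper first verifies $\cone(v_1,\ldots,v_r) = \trop(S)^\vee$ and then deduces validity of the added binomials by ``exponentiating'' the resulting conic combinations, whereas you invoke the dictionary directly to see that each $w_j \in \trop(S)^\vee$ already forces $x^{(w_j)_+} \geq x^{(w_j)_-}$ on $S$; your route is marginally shorter but not substantively different.
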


\begin{proof}
  By \Cref{lem:hadamardForS}, the set $S$ has the Hadamard property so that $\trop(S)$ is a convex cone. Since the polynomials $g_i = x^{a_i} - x^{b_i}$ are pure binomials, the vector $v_i = a_i - b_i$ associated to the exponents is in $\trop(S)^\vee$, which shows $\cone(v_1,\ldots,v_r) \subset \trop(S)^\vee$. For the reverse inclusion, suppose that $\trop(S)^\vee$ is strictly larger. Dually, this means that $\cone(v_1,\ldots,v_r)^\vee$ is strictly larger than $\trop(S)$. Since the linear inequality $\langle v_i,y\rangle \geq 0$ exponentiates exactly to $x^{a_i} - x^{b_i}\geq 0$ for $x_i = \exp(y_i)$, this shows that $\cone(v_1,\ldots,v_r)^\vee = \trop(S)$, a contradiction (see \Cref{prop:binomIneq}). 
  
  This implies that every element of a generating set for the semigroup $\trop(S)^\vee \cap \Z^n$ is a conic combination of $v_1,\ldots,v_r$. By exponentiating, such inequalities are redundant for $S$ and we can therefore add them all to the inequality description.
\end{proof}

Now fix a finite set $A \subset \Z_{\geq 0}^n$. Write $\R[x_1,x_2,\ldots,x_n]_A$ for the vector space of polynomials whose support is contained in $A$. By intersecting convex cones of polynomials with $\R[x_1,x_2,\ldots,x_n]_A$, we see that, up to changing degree bounds, tropicalizations of dual cones to quadratic modules and preorders behave in the same way. 
\begin{lemma}\label{lem:qmvspo}
  Let $S\subset \R^n$ be a full-dimensional semi-algebraic set defined by the inequalities $x_i\geq 0$ for $i\geq 0$ and pure binomial inequalities $g_i = x^{a_i} - x^{b_i}\geq 0$ ($i=1,2,\ldots,r$) with $a_i,b_i \in \ZZ_{\geq 0}^n$.
  Let $A\subset \Z_{\geq 0}^n$ be fixed and for $d\geq \max\{|\gamma|:\gamma\in A\}$, denote by $\pi_{A}$ the projection  $\pi_A :\R[x_1,x_2,\ldots,x_n]_d^* \to \R[x_1,x_2,\ldots,x_n]_A^*$ given by restriction. Consider the cones
  \[
    Q_d = {\rm QM}_d(g_1, \hdots, g_r) \text{ and  } P_d = {\rm PO}_d(g_1,\ldots,g_r).
  \]
  \begin{enumerate}
    \item The tropicalization of $P_d^\vee$ is contained in the tropicalization of $Q_d^\vee$ after projection, i.e.
    \[ \pi_A(\trop(Q_d^\vee) )\supseteq \pi_A(\trop(P_d^\vee)). \] 
    \item For sufficiently large $D\in \N$, the tropicalization of $Q_D^\vee$ is contained in the tropicalization of $P_d^\vee$ after projection, i.e.~$\pi_A(\trop(Q_D^\vee))\subseteq \pi_A(\trop(P_d^\vee))$.
  \end{enumerate}
\end{lemma}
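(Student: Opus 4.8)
The plan is to exploit the elementary containments between the truncated quadratic module and the truncated preorder, together with the fact that tropicalization commutes with restriction to a coordinate subspace (as already used repeatedly in the excerpt, e.g.\ in \Cref{tropicalizingpseudomoments}). For part (1), I would simply observe that for every degree $d$ we have a containment $Q_d \subseteq P_d$: each generator $x^J g_i$ of $Q_d$ is the special case $I = \{i\}$ of a generator $x^J g^I$ of $P_d$, and the squarefree monomials $x^J$ are the case $I = \emptyset$. Hence $P_d^\vee \subseteq Q_d^\vee$, so $\trop(P_d^\vee) \subseteq \trop(Q_d^\vee)$, and applying the coordinate projection $\pi_A$ preserves this inclusion, giving $\pi_A(\trop(P_d^\vee)) \subseteq \pi_A(\trop(Q_d^\vee))$.

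For part (2), the idea is that a product $g^I = \prod_{i\in I} g_i$, although not literally an element of a quadratic module, can be \emph{approximated at the level of tropicalizations} by iterated single-binomial constructions once the degree bound is large enough. Concretely, I would argue via \Cref{thm:genpseudomom}: for $d$ large, $\trop(Q_d^\vee)$ is exactly the cone $F(S)_d$ cut out by the midpoint-convexity inequalities (type 1) and the nonincreasing inequalities $h(a) \ge h(b)$ for $a - b \in \trop(S)^\vee$ (type 2). I would then show that, for $D$ sufficiently large, these same two families of inequalities already hold on $\trop(P_d^\vee)$ after restriction to $A$ — indeed $P_d$ contains all the squarefree-shifted squares $x^J z^{2\alpha}$ giving the type-1 inequalities, and it contains the products $x^J g^I$, whose localizing matrices have diagonal entries yielding the type-2 inequalities once the products of the defining binomials are available (which they are, since we only need the exponent vectors $v_i = a_i - b_i$, whose nonnegative combinations already generate $N = \trop(S)^\vee \cap \Z^n$ by hypothesis). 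Thus $\pi_A(\trop(Q_D^\vee)) = \pi_A(F(S)_D) \subseteq \pi_A(\trop(P_d^\vee))$ for $d$ chosen large enough relative to $D$ and $A$; but note the statement is the other way — $D$ is chosen large in terms of $d$ — so I would instead reason that \emph{every} inequality valid on $\trop(P_d^\vee)$ is, for $D \gg d$, already among the defining inequalities of $F(S)_D$, hence valid on $\trop(Q_D^\vee)$; passing to duals and projecting gives the claim.

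The cleanest route to part (2) is probably: characterize $\pi_A(\trop(P_d^\vee))$ by a finite list of binomial (equivalently, linear-in-$h$) inequalities, using that $P_d^\vee$ is a toric spectrahedron with the Hadamard property (the proof of \Cref{lem:QM_Hadamard} adapts verbatim, since $g^I$ is again a difference of two monomial matrices after expanding, or one handles the intersection over $I$ as in that proof), so its tropicalization is governed by $2\times 2$ minors and diagonal entries of the relevant localizing matrices; every such inequality involves only finitely many lattice points, hence is captured by $F(S)_D$ once $D$ exceeds the degrees of all monomials appearing. Then $\trop(Q_D^\vee) = F(S)_D$ satisfies all of them, so $\pi_A(\trop(Q_D^\vee)) \subseteq \pi_A(\trop(P_d^\vee))$.

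I expect the main obstacle to be part (2): one must verify carefully that the tropicalized diagonal-entry inequalities coming from the \emph{products} $x^J g^I$ in $P_d$ do not produce anything stronger than type-2 inequalities $h(a) \ge h(b)$ with $a - b \in \trop(S)^\vee$ — i.e.\ that expanding a product of pure binomials and tropicalizing still yields inequalities of the expected shape and nothing outside $F(S)_\bullet$ — and, conversely, that for $D$ large enough the single-binomial quadratic module already realizes, via chained type-2 inequalities $h(a)\ge h(a')\ge\dots\ge h(b)$, every nonincreasing inequality that a product $g^I$ could give. Both directions reduce to bookkeeping with the semigroup $N$ generated by the $v_i$, which is why the generation hypothesis in \Cref{thm:genpseudomom} is exactly what makes the degrees match up.
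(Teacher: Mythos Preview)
Your argument for part~(1) is correct and identical to the paper's: $Q_d\subseteq P_d$ dualizes to $P_d^\vee\subseteq Q_d^\vee$, and tropicalization plus projection preserve the inclusion.

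For part~(2) there is a genuine gap. Your ``cleanest route'' needs an explicit inequality description of $\trop(P_d^\vee)$, and you claim this comes from adapting \Cref{lem:QM_Hadamard} and \Cref{prop:binom}. But those results only cover a \emph{single} pure binomial: the localizing matrix $M_{k,\ell}(x^Jg_i)$ is a difference $A-B$ of two monomial matrices, which is exactly the hypothesis of \Cref{prop:binom}. For a product $g^I$ with $|I|\geq 2$, the expansion $g^I=\sum_{T\subseteq I}(-1)^{|T|}x^{\sum_{i\in I\setminus T}a_i+\sum_{j\in T}b_j}$ has $2^{|I|}$ terms, so $M_{k,\ell}(x^Jg^I)$ is \emph{not} of the form $A-B$ and neither the Hadamard argument nor the ``$2\times 2$ minors plus diagonals'' description is available. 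In particular, the diagonal entries of $M_{k,\ell}(x^Jg^I)$ are not pure binomials, so their tropicalizations are not simple inequalities $h(a)\geq h(b)$; your last paragraph correctly worries about this but does not resolve it. Without a complete inequality description of $\trop(P_d^\vee)$, showing that $\trop(Q_D^\vee)$ satisfies some valid inequalities on $\trop(P_d^\vee)$ does not yield the containment $\trop(Q_D^\vee)\subseteq\trop(P_d^\vee)$.

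The paper sidesteps this entirely. It never computes $\trop(P_d^\vee)$; instead it takes an interior point $y$ of $\trop(Q_D^\vee)$ (full-dimensional by \Cref{lem:QM_Hadamard}), sets $\ell(x^\gamma)=t^{y(\gamma)}$, and shows directly that each localizing matrix $M_{m,\ell}(x^Jg^I)$ is positive definite for large $t$. The key tool is \Cref{lem:manyBinom}, which is designed precisely for matrices of the form $A_0(x)+\sum_k c_kA_k(x)$ with many monomial summands: the strict midpoint and diagonal-dominance inequalities that $y$ satisfies (because $D$ is chosen large enough, specifically $D\geq n+2d+2e$ with $e=\deg\prod_i g_i$) force the leading term in each principal minor to come from the diagonal of $A_0=M_{m,\ell}(x^{J+\sum_{i\in I}a_i})$, making the matrix positive definite. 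This gives $\ell\in P_d^\vee$ and hence $y\in\trop(P_d^\vee)$ without ever describing the latter by inequalities.
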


\begin{proof}
  Part (1) is immediate: Since $Q_d \subseteq P_d$ and convex duality reverses inclusion, we have $Q_d^\vee \supseteq P_d^\vee$. Taking tropicalization preserves these inclusions, which implies the claim. 

  For Part(2), let $D \geq n+2d+2e$ where $e = \deg(\prod_{i\in [r]}g_i)$. By \Cref{lem:QM_Hadamard}, $\trop(Q_D^\vee)$ is a full dimensional convex cone. Therefore it suffices to show that $\pi_A(y)$ is in $\pi_{A}(\trop(P_d^\vee))$ for every interior point $y\in \trop(Q_D^\vee)$.
  
  Consider a point $y= (y(\gamma))_{|\gamma|\leq D}$ in the interior of $\trop(Q_D^\vee)$. Then the tropicalization of the diagonal and principal $2\times 2$ minors of the localizing matrices $\M{k}{x^Jg_i}$ are strictly positive at $y$ for $2k + \deg(x^Jg_i)\leq D$. In particular, 
  \[
    y(a_i+J+ 2\gamma) > y(b_i+J+ 2\gamma)
    \text{ and } y(J + 2\gamma) +y(J + 2\delta) >  2y(J + \gamma+ \delta) 
  \]
  for all $J \subseteq [n]$, $i=1, \hdots, r$, and $|\gamma|, |\delta|\leq k$. We can write any $\delta\in \Z_{\geq 0}^n$ in the form $\delta = J + 2\gamma$ for some $J\subseteq [n]$ and $\gamma\in \Z_{\geq 0}^n$, giving that $y(a_i+\delta) > y(b_i+\delta)$ for all $|\delta|\leq D -\deg(g_i)$, and so in particular all  $|\delta|\leq D -e$. 

  Let $\ell \in \R[x_1,x_2,\ldots,x_n]^*_d$ be defined by $\ell(x^{\gamma}) = t^{y({\gamma})}$, where we will choose $t\in \R_{>0}$ sufficiently large. Now consider the localizing matrix  $\M{m}{x^Jg^I}[\ell]$ for some $I\subseteq [r]$ where $2m +\deg(x^Jg^I)\leq d$. By linearity, we can expand $g^I$ and write
  \[
    \M{m}{x^Jg^I}[\ell] = \M{m}{x^{J + \sum_{i\in I}a_i}}[\ell] 
    + \sum_{\emptyset \neq T\subseteq I }(-1)^{|T|}\M{m}{x^{J + \sum_{i\in I\backslash T}a_i  +  \sum_{j\in T}b_j}}[\ell].
  \] 
  We claim that this is a polynomial matrix of the form required for \Cref{lem:manyBinom}.  The entries in each of the above matrices are variables of the form $\ell(x^{\gamma})$ for $|\gamma|\leq m$. First, we check that $T=\emptyset$ maximizes the tropicalization of the diagonal entries in the above sum. For all $T\neq \emptyset$ and all $|\gamma|\leq m$, we have 
  \[
    y\left(J+ 2\gamma+ \sum_{i\in I}a_i\right) > y\left(J+ 2\gamma+ \sum_{i\in I\backslash T}a_i + \sum_{j\in T}b_j\right)
  \]
  This follows from the inequalities $y(a_i+\delta) > y(b_i+\delta)$ for all $|\delta|\leq n+2m +e \leq D-e$ and induction on $|T|$.  

  Now fix some $T\subseteq I$ and let $\eta$ denote the integer vector $J + \sum_{i\in I\backslash T}a_i  +  \sum_{j\in T}b_j$. If $J'\subseteq[n]$ is the set of indices for which $\eta$ is odd, then we can rewrite $\eta = J' + 2\theta$  for some $\theta\in \Z_{\geq 0}^n$. 
  Consider the principal $2\times 2$ minor of $\M{m}{x^{\eta}}[\ell]$ given by $\gamma\neq \delta$ with $|\gamma|, |\delta|\leq m$. To check that the tropicalization of this minor is positive at $y$, we need to show that 
  \[ 
    2y (\eta +\gamma + \delta)< y(\eta+ 2\gamma) + y(\eta + 2\delta).
  \]
  Indeed, by using the $2\times 2$ minor of $\M{d}{x^{J'}}[\ell]$ corresponding to $(\theta+\gamma, \theta+\delta)$, we see that 
  \begin{align*}
    2y (\eta +\gamma + \delta)  = 2y (J' + 2\theta +\gamma + \delta) & < y (J' + 2(\theta+\gamma)) + y (J' + 2(\theta+\delta))\\ 
    & = y(\eta+ 2\gamma) + y(\eta + 2\delta).
  \end{align*}

  By \Cref{lem:manyBinom}, the matrix $\M{m}{x^Jg^I}[\ell]$ is positive definite for sufficiently large $t$. We can take $t$ large enough so that this holds for all $J\subseteq [n]$ and $I\subseteq [r]$, giving $\ell \in P_d^{\vee}$ and $y\in \trop(P_d^{\vee})$. 
\end{proof}

\subsection{The cube}\label{sec:cube}
As an application, we consider the $0/1$-cube $[0,1]^n\subset \R^n$ with its natural inequality description $x_i\geq 0$ and $g_i = 1-x_i\geq 0$ for $i=1,2,\ldots,n$. We apply the results of the previous section to show not only that the tropicalizations of the dual cones of the truncated preorder and quadratic module agree in sufficiently high degree but we give a nice combinatorial description of when this happens.

\begin{lemma}\label{lem:extensioncubical}
  Let $a_1,a_2,\ldots,a_n\in \N$ and let $B = \left([0,a_1]\times [0,a_2] \times \ldots [0,a_n]\right)\cap \Z^n\subset \R^n$ be a lattice box.
  Let $h\colon B\to \R$ be a midpoint convex function that is non-increasing in coordinate directions (i.e.~$h(\alpha + e_i)\leq h(\alpha)$ for all $\alpha\in B$ such that $\alpha + e_i \in B$). 
  Then the extension $\widehat{h}\colon \Z_{\geq 0}^n\to \R$ that assigns to every $\alpha\in \Z_{\geq 0}^n$ the value $h(\gamma)$, where $\gamma$ is the lattice point in $B$ that is closest to $\alpha$, is midpoint convex and non-increasing in coordinate directions.
\end{lemma}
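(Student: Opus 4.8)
The plan is to show that the retraction-based extension $\widehat{h}$ inherits both properties from $h$ by a case analysis on the positions of the relevant lattice points relative to the box $B$. The key observation to set up first is an explicit description of the nearest-point map $r\colon \Z_{\geq 0}^n \to B$: since $B$ is a coordinate-aligned box $\prod_i [0,a_i]$, the closest point $r(\alpha)$ to $\alpha \in \Z_{\geq 0}^n$ is obtained coordinatewise by $r(\alpha)_i = \min(\alpha_i, a_i)$ (there is no ambiguity because all coordinates of points in $B$ lie between $0$ and $a_i$, so truncation is the nearest-point retraction). Thus $\widehat h = h \circ r$, and $r$ is order-preserving in each coordinate: if $\beta \geq \alpha$ coordinatewise then $r(\beta) \geq r(\alpha)$ coordinatewise.

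The \emph{non-increasing} property is then immediate: if $\alpha, \alpha + e_i \in \Z_{\geq 0}^n$, then $r(\alpha + e_i)$ equals $r(\alpha)$ if $\alpha_i \geq a_i$, and equals $r(\alpha) + e_i \in B$ otherwise; in the first case $\widehat h(\alpha+e_i) = \widehat h(\alpha)$, and in the second case $\widehat h(\alpha + e_i) = h(r(\alpha) + e_i) \leq h(r(\alpha)) = \widehat h(\alpha)$ because $h$ is non-increasing in coordinate directions on $B$. For \emph{midpoint convexity}, I take $\alpha, \beta \in \Z_{\geq 0}^n$ with $\mu := \tfrac{\alpha+\beta}{2} \in \Z_{\geq 0}^n$, and must show $\widehat h(\alpha) + \widehat h(\beta) \geq 2\widehat h(\mu)$, i.e.\ $h(r(\alpha)) + h(r(\beta)) \geq 2 h(r(\mu))$. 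The natural approach is to compare $r(\mu)$ with $\tfrac{r(\alpha)+r(\beta)}{2}$ coordinatewise. In coordinate $i$: if both $\alpha_i, \beta_i \leq a_i$ then $r(\alpha)_i + r(\beta)_i = \alpha_i + \beta_i = 2\mu_i \geq 2 r(\mu)_i$ with equality when $\mu_i \leq a_i$; if both $\alpha_i, \beta_i \geq a_i$ then $r(\alpha)_i = r(\beta)_i = r(\mu)_i = a_i$; and in the mixed case (say $\alpha_i \leq a_i \leq \beta_i$) one has $r(\alpha)_i + r(\beta)_i = \alpha_i + a_i \geq r(\mu)_i + a_i \geq 2 r(\mu)_i$, using $\alpha_i \geq r(\mu)_i$ (which holds since $r(\mu)_i = \min(\mu_i, a_i) \leq \mu_i \leq \tfrac{\alpha_i + \beta_i}{2}$, and one checks $\min(\mu_i,a_i)\le \alpha_i$ directly from $\mu_i = \tfrac{\alpha_i+\beta_i}{2}$ and $a_i \le \beta_i$). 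In all cases $\tfrac{r(\alpha)+r(\beta)}{2} \geq r(\mu)$ coordinatewise, and $\tfrac{r(\alpha)+r(\beta)}{2}$ is a lattice point in $B$ (it is the midpoint of two points of $B$ and its coordinates are integers — here I should double-check integrality, which follows since in each coordinate $r(\alpha)_i + r(\beta)_i$ is either $\alpha_i+\beta_i$, $2a_i$, or $\alpha_i + a_i$, and the last is even precisely when needed; if integrality genuinely fails one instead bounds via a nearby lattice point, but I expect the box structure makes it work cleanly).

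Combining the two comparisons: $h(r(\alpha)) + h(r(\beta)) \geq 2 h\!\left(\tfrac{r(\alpha)+r(\beta)}{2}\right) \geq 2 h(r(\mu))$, where the first inequality is midpoint convexity of $h$ on $B$ and the second is the non-increasing property applied along a monotone lattice path from $\tfrac{r(\alpha)+r(\beta)}{2}$ down to $r(\mu)$ (decrementing one coordinate at a time, staying in $B$ since both endpoints are in $B$ and $B$ is a box). This yields midpoint convexity of $\widehat h$. The main obstacle I anticipate is the mixed-coordinate bookkeeping in the midpoint-convexity step — in particular verifying that the auxiliary point $\tfrac{r(\alpha)+r(\beta)}{2}$ really is an integer point of $B$ and that it dominates $r(\mu)$ coordinatewise in every sign configuration; everything else is routine once the explicit formula $r(\alpha)_i = \min(\alpha_i, a_i)$ is in hand.
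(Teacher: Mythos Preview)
Your setup via the retraction $r(\alpha)_i=\min(\alpha_i,a_i)$ and the non-increasing argument match the paper exactly. The midpoint-convexity step, however, has a genuine gap.

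First, a sign slip: in the mixed case $\alpha_i\le a_i\le\beta_i$ you assert $\alpha_i\ge r(\mu)_i$, but in fact $r(\mu)_i=\min(\mu_i,a_i)\ge\alpha_i$ since both $\mu_i$ and $a_i$ dominate $\alpha_i$ there; e.g.\ $a_i=2$, $\alpha_i=0$, $\beta_i=4$ gives $r(\mu)_i=2>0=\alpha_i$. The correct comparison is $\tfrac12\bigl(r(\alpha)+r(\beta)\bigr)\le r(\mu)$ coordinatewise. Fortunately that is the direction you actually need for the non-increasing step, so this error is self-correcting once spotted.

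The real obstruction is integrality: $\tfrac12\bigl(r(\alpha)+r(\beta)\bigr)$ need not be a lattice point, so you cannot apply midpoint convexity of $h$ to the pair $r(\alpha),r(\beta)$ at all. In a mixed coordinate the value is $\tfrac12(\alpha_i+a_i)$, and nothing forces $\alpha_i+a_i$ to be even---only $\alpha_i+\beta_i$ is. Concretely, with $n=1$, $a_1=2$, $\alpha=1$, $\beta=3$ one gets $r(\alpha)=1$, $r(\beta)=2$, and ``midpoint'' $3/2\notin\Z$. Your parenthetical ``the last is even precisely when needed'' is false, and rounding to a nearby lattice point destroys the exact midpoint relation required by the hypothesis on $h$, so there is no cheap patch along these lines.

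The paper avoids this by shifting $r(\alpha)$ and $r(\beta)$ \emph{before} taking the midpoint: it constructs $v,w\in\Z_{\ge0}^n$, coordinate by coordinate through the same case analysis, so that $r(\alpha)+v,\ r(\beta)+w\in B$ and $(r(\alpha)+v)+(r(\beta)+w)=2\,r(\mu)$ exactly. Then
\[
h\bigl(r(\alpha)\bigr)+h\bigl(r(\beta)\bigr)\ \ge\ h\bigl(r(\alpha)+v\bigr)+h\bigl(r(\beta)+w\bigr)\ \ge\ 2\,h\bigl(r(\mu)\bigr),
\]
the first inequality from the non-increasing property (the shifts $v,w$ are nonnegative) and the second from midpoint convexity of $h$ on $B$. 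This sidesteps the integrality issue entirely and is the missing idea in your proposal.
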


\begin{proof}
  To prove this claim, we define the map 
  \[ \varphi\colon \left\{ 
    \begin{array}{l}
    \Z_{\geq 0}^n \to B, \\
    \varphi(\alpha) = (\min\{\alpha_1,a_1\},\min\{\alpha_2,a_2\},\ldots,\min\{\alpha_n,a_n\}).
    \end{array}\right.
  \]
  Then the extension $\wh{h}$ is the equal to $h\circ \varphi$. It is straightforward to see that $\wh{h}(\alpha + e_i) \leq \wh{h}(\alpha)$ because $\varphi(\alpha+e_i)$ is either equal to $\alpha$ or $\alpha + e_i$.

  So we only have to show that $\wh{h}(\alpha) + \wh{h}(\beta) \geq 2 \wh{h}(\delta)$ for all $\alpha,\beta,\delta\in \Z_{\geq 0}^n$ with $\alpha + \beta = 2 \delta$.

  We prove this by case distinction and analysis of $\varphi$. 
  For this, we construct two vectors $v,w\in\Z^n_{\geq 0}$ such that $\varphi(\alpha) + v, \varphi(\beta) + w \in B$ and $\varphi(\alpha) + v + \varphi(\beta) + w = 2\varphi(\delta)$: 
  If $\alpha_i< a_i$ and $\beta_i < a_i$, set $v_i = 0$ and $w_i = 0$. If $\alpha_i \geq a_i$ and $\beta_i \geq a_i$, again set $v_i = 0$ and $w_i = 0$. If $\alpha_i < a_i$, $\beta_i \geq a_i$, and $\delta_i \geq a_i$, set $v_i = a_i - \alpha_i$ and $w_i = 0$. If $\alpha_i < a_i$, $\beta_i \geq a_i$, and $\delta_i <a_i$, set $v_i = \beta_i - a_i$ and $w_i = 0$. Symmetrically, if $\alpha_i \geq a_i$, $\beta_i < a_i$, and $\delta_i \geq a_i$, set $v_i = 0$ and $w_i = a_i - \beta_i$. Lastly, if $\alpha_i \geq a_i$, $\beta_i < a_i$, and $\delta_i \geq a_i$, set $v_i = 0$ and $w_i = \alpha_i - a_i$. 
  
  With this, we are done, because 
  \[
    \begin{array}{l}
      \wh{h}(\alpha) + \wh{h}(\beta) = h(\varphi(\alpha)) + h(\varphi(\beta))\geq \\
      \geq h(\varphi(\alpha) + v) + h(\varphi(\beta) + w) \geq 2 h(\varphi(\delta)) = 2 \wh{h}(\delta),
    \end{array}
  \]
  where we used the midpoint convexity of $h$ since $\varphi(\alpha) + v + \varphi(\beta) + w = 2\varphi(\delta)$.
\end{proof}

Fix a finite set $A\subset \Z_{\geq 0}^n$. The \emph{cubical hull} of $A$ is the smallest lattice box 
\[ B = \left([\ell_1,u_1]\times [\ell_2,u_2] \times \ldots \times [\ell_n,u_n]\right)\cap \Z^n
\]
(with $\ell_i\leq u_i \in \Z_{\geq 0}$) that contains the set $A$. We now show stabilization of tropicalizations of pseudo-moment cones on the unit cube.

\begin{theorem}\label{cubeandcubicalhull}
  Let $Q_m$ be the truncated quadratic module ${\rm QM}_m(x_1,x_2,\ldots,x_n,g_1,g_2,\ldots,g_n)$ with $g_i = 1-x_i$.
  A function $h\colon A \to \R$ belongs to $\pi_A(\trop(Q_m^\vee))$ for all $m\in \N$ with $A\subset m\,\Delta_n$ if and only if it is non-increasing in coordinate directions and 
  it can be extended to a non-increasing midpoint convex function on the cubical hull of $A$. 
\end{theorem}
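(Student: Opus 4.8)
The plan is to apply the pure-binomial machinery of \Cref{thm:genpseudomom} and \Cref{lem:qmvspo} to the unit cube and then use the extension lemma \Cref{lem:extensioncubical} to convert the degree-$m$ descriptions into a statement about the cubical hull. First I would check that the unit cube $[0,1]^n$, with its description by the pure binomials $x_i \geq 0$ and $g_i = 1 - x_i \geq 0$, satisfies the hypotheses of \Cref{thm:genpseudomom}: it is full-dimensional, its positive points are dense, $\trop([0,1]^n) = \R_{\leq 0}^n$ so $\trop([0,1]^n)^\vee = \R_{\geq 0}^n$, and the exponent vectors $-e_i$ of the binomials $g_i = 1 - x_i$ generate the semigroup $\R_{\geq 0}^n \cap \Z^n$. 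Hence for sufficiently large $d$ the tropicalization $\trop(Q_d^\vee)$ is the polyhedral cone $F([0,1]^n)_d$ cut out by the midpoint-convexity inequalities $h(a_1) + h(a_2) \geq 2h(b)$ for $a_1 + a_2 = 2b$, $|a_i|,|b| \leq d$, together with the monotonicity inequalities $h(a) \geq h(b)$ whenever $a - b \in \R_{\geq 0}^n$ and $|a|,|b| \leq d$ — i.e., $h$ is non-increasing in coordinate directions on $\{\gamma : |\gamma| \leq d\}$. By \Cref{lem:qmvspo}(1)–(2), the projections $\pi_A(\trop(Q_m^\vee))$ are, up to increasing the degree, a nested family, so $\bigcap_m \pi_A(\trop(Q_m^\vee)) = \bigcap_d \pi_A(F([0,1]^n)_d)$.

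The next step is to identify this intersection. For the forward direction, suppose $h \colon A \to \R$ lies in $\pi_A(\trop(Q_m^\vee))$ for every $m$. Fix $d$ large; then $h$ extends to some $h_d$ on $\{\gamma : |\gamma| \leq d\}$ that is midpoint convex and non-increasing in the coordinate directions there. Restricting $h_d$ to the cubical hull $B$ of $A$ (which is contained in $\{\gamma : |\gamma| \leq d\}$ once $d$ is large enough) shows $h$ is non-increasing in coordinate directions, and it extends to a midpoint convex, coordinate-non-increasing function on $B$. One subtlety: I must argue that $h$ itself, restricted to $A$, being non-increasing in coordinate directions and extendable to a midpoint convex function on $B$ is genuinely implied — this follows since $B \cap \Z^n \subseteq \{\gamma : |\gamma| \leq d\}$ and the restriction of a midpoint convex function stays midpoint convex, and likewise for monotonicity. (If one wants a single extension valid for all $d$ simultaneously, one can instead argue directly from the next paragraph's converse, since the converse produces a genuine point of every $\pi_A(\trop(Q_m^\vee))$.)

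For the converse, suppose $h \colon A \to \R$ is non-increasing in coordinate directions and admits an extension $h_B$ to the cubical hull $B$ that is midpoint convex and non-increasing in coordinate directions. Then \Cref{lem:extensioncubical} gives an extension $\widehat{h_B} \colon \Z_{\geq 0}^n \to \R$ (via the nearest-point retraction $\varphi$ onto the box $B$) that is midpoint convex and non-increasing in coordinate directions on all of $\Z_{\geq 0}^n$. For any $d$, restricting $\widehat{h_B}$ to $\{\gamma : |\gamma| \leq d\}$ produces a function satisfying exactly the inequalities (1) and (2) defining $F([0,1]^n)_d$, hence an element of $\trop(Q_d^\vee)$ for $d$ large, whose projection to $\R^A$ is $h$; and by \Cref{lem:qmvspo}(2) and the nesting, this places $h$ in $\pi_A(\trop(Q_m^\vee))$ for every $m$. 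The main obstacle I anticipate is bookkeeping around the degree bounds: matching the ``sufficiently large $d$'' in \Cref{thm:genpseudomom} with the ``sufficiently large $D$'' in \Cref{lem:qmvspo}(2) and with the requirement $d \geq \max\{|\gamma| : \gamma \in A\}$, and making sure the cubical hull $B$ of $A$ really does sit inside the relevant degree-truncated simplex so that the restriction arguments in both directions are legitimate. Once those inclusions are set up carefully, both directions reduce to \Cref{lem:extensioncubical} and the already-established polyhedral descriptions.
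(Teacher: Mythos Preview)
Your approach is essentially the paper's: both directions hinge on the polyhedral description of $\trop(Q_m^\vee)$ by midpoint-convexity and coordinate-monotonicity, together with \Cref{lem:extensioncubical} for the converse extension. The paper, however, obtains that description for \emph{every} $m$ directly from \Cref{prop:binom} (the localizing matrices $M_{m,\ell}(x^J(1-x_i))$ are differences of monomial Hankel matrices), which avoids your detour through \Cref{thm:genpseudomom} (valid only for large $d$) and the subsequent nesting argument. Two small slips to fix: $\trop([0,1]^n)^\vee = \R_{\leq 0}^n$, not $\R_{\geq 0}^n$ (your conclusion ``non-increasing'' is nonetheless correct); and \Cref{lem:qmvspo} compares $\qm$ with $\po$, not $\qm$ at different degrees---the nesting $\pi_A(\trop(Q_{m'}^\vee)) \subseteq \pi_A(\trop(Q_m^\vee))$ for $m \le m'$ follows simply from $Q_m \subseteq Q_{m'}$.
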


\begin{proof}
  \Cref{prop:binom} tells us that a function $h\colon A \to \R$ is in $\trop(Q_m^\vee)$ if and only if it is midpoint convex and non-increasing in coordinate directions. Indeed, the localizing matrices $\M{m}{g_i}$ are differences of the Hankel matrix and the localizing matrix of a variable, i.e.~of $\M{m}{1}$ and $\M{m}{x_i}$, which play the roles of $A$ and $B$ in \Cref{prop:binom}. (Formally, we apply \Cref{prop:binom} to the block sum of all of these matrices.) The $2\times 2$ minors of a localizing matrix of a monomial tropicalize to midpoint convexity conditions. The tropicalization of the diagonal entries of $\M{m}{x^Jg_i}$ give the condition that the function $h$ is non-increasing when moving in a coordinate direction.

  So if $h\in \trop(Q_m^\vee)$ for all $m\in \N$, then it is midpoint convex and non-increasing in coordinate directions on $m\,\Delta_{n}$, which contains the cubical hull of $A$ for sufficiently large $m$. This shows the first implication.

  For the other direction, we explicitly describe in \Cref{lem:extensioncubical} an extension from the cubical hull to $\Z_{\geq 0}^n$ that is midpoint-convex and non-increasing. The appropriate restriction of this extension is in $\trop(Q_m^\vee)$ for all $m\in \N$.
\end{proof}

\begin{example}The cubical hull of the Motzkin configuration $A = \{(0,0),(1,1),(2,1),(1,2)\}$
is the square $\{0,1,2\}^2$. The inequalities defining the convex cone of midpoint convex functions $h:\{0,1,2\}^2\to \R$ with $h(\alpha)= y_{\alpha}$ 
are $y_{\alpha} \geq y_{\beta}$ for $\alpha \leq \beta$ (in the partial ordering on $\Z^2$ given by entry-wise comparing the vectors) and $y_{2\alpha}+y_{2\beta}\geq 2 y_{\alpha+\beta}$ for all $\alpha, \beta \in \{0,1\}^2$.
Given values of $h$ on $A$, note that 
$ \min\{y_{12}, y_{21}\}\geq y_{22}$ and $y_{00}+y_{22} \geq 2 y_{11}$. 
Eliminating $y_{22}$ gives inequalities on the values of $h$ on $A$, namely 
$y_{00}+ \min\{y_{12}, y_{21}\}\geq 2 y_{11}$.
We claim that the set of functions $h:A\to \R$ that can be extended to a midpoint convex function on the cube are given by 
\[ y_{11}\geq y_{12}, \ \ y_{11}\geq y_{21}, \ \ y_{00}+y_{12}\geq 2y_{11}, \ \text{ and }\ y_{00}+y_{21}\geq 2y_{11}.
\]
The inequality $y_{00}\geq y_{11}$ is implied by the first and third inequalities. 
Given a function satisfying these inequalities, we can extend it to $\{0,1,2\}^2$
by defining the missing function values to be as large as possible given the 
constraints that $y_{\alpha} \geq y_{\beta}$.
That is, we define $y_{01}$, $y_{02}$, $y_{10}$, and $y_{20}$ 
to be equal to $y_{00}$ as well as $y_{22}$ to be the minimum of $y_{12}$ and $y_{21}$. 
\end{example}

\subsection{Toric Cubes} We now discuss application of the above results to toric cubes, The situtation is more complicated than for 0/1 cube, and we are not able to prove that the stabilization phenomenon holds generally. In Section \ref{sec:stab} we show stabilization in the case that tropicalization of the toric cube is contained in the strictly negative orthant (except for the origin).

Let $S\subseteq [0,1]^n$ be a full-dimensional toric cube defined by $x_i\geq 0$, $1-x_i\geq 0$ and pure binomial inequalities $g_1 \geq 0, \dots, g_r \geq 0$.  It is the image of $[0,1]^d$ with $d\geq n$ under a monomial map $\varphi$. The map $\varphi$ induces a map $P$ between the lattices $\mathbb{Z}^n$ and $\mathbb{Z}^d$. Let $L$ be the image of this map, which is a sublattice of $\Z^d$. We consider $A'=P(A)\subset L$. We would like to apply our previous results to this point configuration in the lattice $L$. The problem is that the cones do not get mapped to each other in general. To set this up, we write $\varphi^*\colon \R[x_1,\ldots,x_n] \to \R[t_1,\ldots,t_d]$ for the induced map on coordinate rings that maps $x_i$ to the $i$-th coordinate monomial in the map $\varphi$. The issue with pulling back our results via the transpose of this map is simply that $\varphi^*\qm(g_1,\ldots,g_r)$ is usually not contained in the quadratic module of the cube $[0,1]^d$.

\begin{example}\label{exm_motztoricsos}
  Let $S$ be the subset of $[0,1]^2$ given by the inequalities $x_2\geq x_1^3$ and $x_1^2\geq x_2$. Then $S$ is the image of $[0,1]^2$ under the map $\varphi\colon [0,1]^2 \to S$, $(t_1,t_2) \mapsto(t_1t_2,t_1^2 t_2^3)$. 
  The corresponding map $P$ of lattices is described by the integer matrix whose columns correspond to the exponent vectors of the monomials in the coordinates of $\varphi$. In this example, we get 
  \[ P = \begin{pmatrix} 1 & 2\\  1 & 3 \end{pmatrix} \]
  This is even a unimodular lattice transformation. In particular, we have $L = \Z^2$ here.

  However, $\varphi^*(x_2 - x_1^3) = t_1^2t_2^3(1-t_1)$ is not in the quadratic module generated by $t_1$, $t_2$, $1-t_1$, and $1-t_2$. To see this, suppose it had a representation 
  \[
    t_1^2t_2^3(1-t_1) = \sigma_0 + \sigma_1 t_1 + \sigma_2 t_2 + \sigma_3 (1-t_1) + \sigma_4 (1-t_2) \]
  with sums of squares of polynomials $\sigma_i \in \R[t_1,t_2]$. By setting $t_1 = 0$ in this identity, we get
  \[ 0 = \sigma_0(0,t_2) + \sigma_2(0,t_2) t_2 + \sigma_3(0,t_2) + \sigma_4(0,t_2)(1-t_2). \]
  Since the right hand side is in a pointed quadratic module (of the interval $[0,1]\subset \R$), every term must be $0$ and therefore, every sum of squares must be identically $0$ which means that it is divisible by $t_1^2$. By plugging back in, we see that every term on the right hand side in the original identity is divisible by $t_1$. Cancelling one $t_1$ from it and repeating the argument, we get a representation
  \[ t_2^3(1-t_1) = \sigma_0' + \sigma_1' t_1 + \sigma_2' t_2 + \sigma_3' (1-t_1) + \sigma_4' (1-t_2). \]
  Symmetrically, we can go through the same process to see that the identity is divisible by $t_2^2$ and end up with the case 
  \[ t_2(1-t_1) = s_0 + s_1 t_1 + s_2 t_2 + s_3 (1-t_1) + s_4(1-t_2) \]
  for sums of squares $s_i \in \R[t_1,t_2]$. To find a contradiction here, we proceed as before by setting $t_2 = 0$ to conclude that the polynomial $(1-t_1)$ would have to be in the quadratic module generated by $t_2$, $t_1t_2$, $(1-t_1)t_2$, and $(1-t_2)t_2$. This is impossible because every generator of this quadratic module vanishes for $t_2 = 0$ which means that $(1-t_1)$ would have to be a sum of squares, contradiction.
\end{example}

We expect that this rather simple example (that could be fixed by considering the preorder of the square $[0,1]^2$ instead of its quadratic module) has generalizations to higher dimensions (where preorders are not saturated anymore) that cannot be fixed in this simple way.

To understand toric cubes, we go back to \Cref{thm:genpseudomom}. Let $k\,\Delta_n\subset \ZZ^n$ denote the set of nonnegative integer vectors with sum of coordinates at most $k$. 

\begin{proposition}\label{prop:ext}
Let $S$ be a toric cube defined by pure binomial inequalities $g_1\geq 0,\dots,g_r\geq 0$. Let $k\in \N$ and $A \subset k\,\Delta_n$. A function $h:A\rightarrow \RR$ belongs to $\trop \pi_A(\qm_k(g_1,\dots,g_r)^\vee)$ if and only if the function $h$ can be extended to a function $\hat{h}:k\,\Delta_n\rightarrow \RR $ satisfying the following three conditions: midpoint convexity, non-increasing in coordinate directions, and non-decreasing in directions of extreme rays of $(\trop S)^\vee$.
\end{proposition}

\begin{proof}
Let $P\in \Z_{\geq 0}^{d\times n}$ be a rank $d$ matrix of whose columns correspond to monomials parameterizing the toric cube $S$. 
By taking logarithms, the parametrization of the toric cube becomes a linear map from $\R^d\to \R^n$ whose matrix is $P^T$. So the tropicalization $\trop S$ is (up to sign) the image of the nonnegative orthant under the map $P^T$. Therefore the dual polyhedral cone $(\trop S)^\vee$ is, by general duality, isomorphic (under $P$) to the intersection of the nonnegative orthant in $\R^d$ with the image of the linear map $P\colon \R^n \to \R^d$.  
The extreme rays of the cone $(\trop S)^\vee$ give binomial inequalities defining $S$ (as a subset of $[0,1]^n$, that is in addition to the inequalities $x_i\geq 0$ and $1-x_i\geq 0$). Applying \Cref{thm:genpseudomom} to the moment matrices in this setup directly results in three types of conditions: midpoint convexity from the $2\times 2$ minors of $\M{}{x^J}$; non-increasing in coordinate directions from the diagonals of $\M{}{x^J(1-x_i)}$; and non-decreasing in directions of extreme rays of $(\trop(S))^\vee$, i.e.\ for anny extreme ray $u-v$ with nonnegative integer vectors $u$ and $v$, we have $\hat{h}(u) \geq \hat{h}(v)$.
\end{proof}

It seems much harder to show stabilization of tropicalized pseudomoment cones for toric cubes than it was for unit cubes (in \Cref{cubeandcubicalhull}). We need to show a bound on $k$ in \Cref{prop:ext}, such that extending to $k\,\Delta_n$ allows us to automatically extend to $(k+i)\,\Delta_{n}$ for all $i \geq 0$. In the next section we do show stabilization in the case that tropicalization of the toric cube is contained in the negative orthant (except for the origin).

\subsection{Stabilization}\label{sec:stab} Let $A$ be a finite subset of $\Z_{\geq 0}^n$ 
and let $C$ be a pointed rational polyhedral cone in $\R^n$. 
Define the cone $K$ to be the intersection of the finitely many translations of $-C$ by the points in $A$: 
\[K = \bigcap_{a\in A} (a-C).\]
Recall that a function $h$ from some subset of $\Z_{\geq 0}^n$ to $\R$ 
is non-decreasing (with respect to $C$) if 
$a-b\in C$ implies that $h(a) \geq h(b)$. 
\begin{proposition} \label{prop:ExtByMax}
Let $B\subseteq \Z_{\geq 0}^n$ be any finite subset containing $A$.  
If a function $h:A\to \R$ can be extended to a midpoint convex, non-decreasing function 
$\widehat{h}:B\to \R$ then it has such an extension with 
$\widehat{h}(b) \geq  \min_{a\in A}h(a)$ for all $b\in B$ and 
$\widehat{h}(b) =\min_{a\in A}h(a)$ for all $b\in K$. 
\end{proposition}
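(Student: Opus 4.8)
The plan is to start from any mid-point convex, non-decreasing extension $\widehat{h}\colon B\to\R$ of $h$ provided by the hypothesis, and to \emph{clip it from below} at the value $m:=\min_{a\in A}h(a)$. Concretely, fix a point $a^{\ast}\in A$ with $h(a^{\ast})=m$ and define $\widehat{h}_0\colon B\to\R$ by $\widehat{h}_0(b):=\max\{\widehat{h}(b),\,m\}$. I claim $\widehat{h}_0$ is the extension we want.

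First I would verify the easy properties. The function $\widehat{h}_0$ still restricts to $h$ on $A$, since $h(a)\geq m$ gives $\max\{h(a),m\}=h(a)$ for $a\in A$; and $\widehat{h}_0(b)\geq m$ for all $b\in B$ by construction, which is the first asserted bound. Mid-point convexity and monotonicity are preserved because both are stable under taking the pointwise maximum with the constant function $m$: for the mid-point inequality, $\widehat{h}_0(b_1)+\widehat{h}_0(b_2)$ dominates both $\widehat{h}(b_1)+\widehat{h}(b_2)\geq 2\widehat{h}\big(\tfrac{b_1+b_2}{2}\big)$ and $2m$, hence it is at least $2\widehat{h}_0\big(\tfrac{b_1+b_2}{2}\big)$; and if $a-b\in C$ then $\widehat{h}_0(a)\geq\widehat{h}(a)\geq\widehat{h}(b)$ and $\widehat{h}_0(a)\geq m$, so $\widehat{h}_0(a)\geq\widehat{h}_0(b)$.

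The one point requiring an actual observation is that $\widehat{h}_0\equiv m$ on $K$. Here I would use the defining description $K=\bigcap_{a\in A}(a-C)$: every $b\in K$ satisfies $a^{\ast}-b\in C$, and since $\widehat{h}_0$ is non-decreasing this forces $\widehat{h}_0(b)\leq\widehat{h}_0(a^{\ast})=h(a^{\ast})=m$; together with $\widehat{h}_0(b)\geq m$ this gives $\widehat{h}_0(b)=m$ (for $b\in K\cap B$, where $\widehat{h}_0$ is defined). I do not anticipate a real obstacle: the only thing to get right is to clip at the value of a \emph{minimizing} vertex $a^{\ast}$ of $A$ — precisely the element of $A$ that dominates every point of $K$ in the order defined by $C$ — and to note that the maximum with a constant preserves mid-point convexity and $C$-monotonicity simultaneously, so that no further correction is needed.
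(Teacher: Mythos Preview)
Your proof is correct and is genuinely simpler than the paper's. The paper constructs a ``maximal'' extension: for each $b\in B$ it picks an extension $\widehat{h}_b$ whose value at $b$ is as large as possible (or at least exceeds $m$), then takes the pointwise maximum of all the $\widehat{h}_b$, and finally argues via an extremality analysis on the set $B_\lambda$ of minimizers of this maximal extension that the minimum value must be attained at a point of $A$. Your clipping argument bypasses this entirely: you observe directly that taking the pointwise maximum of \emph{any} extension with the constant function $m$ preserves mid-point convexity and $C$-monotonicity (since both properties are stable under $\max$ with a constant), still restricts to $h$ on $A$ because $h\geq m$ there, and immediately yields the lower bound $\widehat{h}_0\geq m$. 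The equality on $K\cap B$ then follows from monotonicity exactly as in the paper. What the paper's approach buys is a canonical ``largest'' extension, which might be conceptually useful elsewhere; what your approach buys is a one-line construction with no case analysis on bounded versus unbounded values and no geometric argument about extreme points of $\conv(B_\lambda)-C$.
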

\begin{proof}
Suppose that $h$ has some extension that is midpoint convex and non-decreasing function 
with respect to $C$. The set of  midpoint convex, non-decreasing extensions of $h$ 
is then non-empty. It is also closed under taking point-wise maximum. 
For each $b\in B$, we choose $\widehat{h}_b$ to be an element of this cone as follows. 
If the value of $\widehat{h}(b)$ is bounded from above over all midpoint convex and 
non-decreasing extensions $\widehat{h}$ of $h$, then we choose $\widehat{h}_b$ to be any extension 
achieving this maximum value.  If not, then we choose can choose $\widehat{h}_b$ 
to be an extension such that $\widehat{h}_b(b) > \min_{a\in A}h(a)$. 

Define $\widehat{h}:B\to \R$ to be the point-wise maximum of $\widehat{h}_b$ over all 
$b\in B$. This is a midpoint convex, non-decreasing extension of $h$. 
We claim that  $\widehat{h}(b) \geq \min_{a\in A}h(a)$ for all $b\in B$. 

Let $\lambda$ denote the minimum value of $\widehat{h}(b)$ over all $b\in B$ and let 
 $B_{\lambda}$ denote the set of points achieving this minimum $B_{\lambda}= \{b \in B: \widehat{h}(b) = \lambda\}$. 
The set ${\rm conv}(B_{\lambda}) - C$ is convex. Let $b \in B_{\lambda}$ be 
an element that is an extreme point of this set. 
Some upper bound on the value of $\widehat{h}(b)$ must be tight. If $b\not\in A$, the
potential upper bounds have the form $\widehat{h}(b) \leq \widehat{h}(c)$ where $b-c\in -C$ 
or $\widehat{h}(b) \leq \frac{1}{2}(\widehat{h}(a)+ \widehat{h}(c))$ where $a+c=2b$. 
The first inequality implies that $\widehat{h}(c) = \lambda$ and thus $c\in B_{\lambda}$. 
Since $b = c + (b-c)$, this contradicts the extremality of $b$. 
Similarly, if the inequality $\widehat{h}(b) \leq \frac{1}{2}(\widehat{h}(a)+ \widehat{h}(c))$ is tight, 
then $\widehat{h}(a)=\widehat{h}(c) = \lambda$, showing that both $a$ and $c$ belong to $B_{\lambda}$, and contradicting 
the extremality of $b$.  It follows that $b$ must belong to $A$ and that $\lambda =  \min_{a\in A}h(a)$
This shows that for all $b\in B$, $\widehat{h}(b)\geq  \min_{a\in A}h(a)$. 

By definition, for all points in $b\in K$,  $\widehat{h}(b)\leq  \min_{a\in A}h(a)$, showing equality. 
\end{proof}

In general it may be difficult to determine whether a given midpoint convex, non-decreasing function on a finite set of lattice points can be extended to such a function on all the other lattice points in the nonnegative orthant, which would imply stability of tropicalization of pseudomoment cones for toric cubes.  Here we give a combinatorial condition under which this question can be decided with finite computation.

\begin{proposition}\label{prop:finiteExt}
Suppose that the interior of $-C$ strictly contains the nonnegative orthant, so that the complement of any translate of $K$ in $\Z_{\geq 0}^n$ is finite and define 
\[\widehat{A} =  \left\{2b-a : a, b \in \Z_{\geq 0}^n\backslash K\right\}.   \]
The set $\widehat{A}$ is finite and a function $h:A\to \R$ can be extended to a midpoint convex, non-decreasing (with respect to $C$) function  
$\widehat{h}:\Z_{\geq 0}^n\to \R$ if and only if it can 
be extended to a midpoint convex, non-decreasing function 
$\widehat{h}:\widehat{A}\to \R$. 
\end{proposition}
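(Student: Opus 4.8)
The plan is to prove the equivalence by exhibiting explicit extensions in each direction, after first replacing the given extension by a normalized one using \Cref{prop:ExtByMax}. Throughout write $\mu := \min_{a\in A} h(a)$. The hypothesis that the interior of $-C$ strictly contains the nonnegative orthant makes $\Z_{\geq 0}^n\setminus K$ finite, so $\widehat A$, being the image of $(\Z_{\geq 0}^n\setminus K)^2$ under $(a,b)\mapsto 2b-a$, is finite; taking $a=b=c$ gives $\Z_{\geq 0}^n\setminus K\subseteq\widehat A$, and since $a\in K$ would force $a'-a\in C$ for all $a'\in A$ and hence $h(a)=\mu$ (such a point can be discarded), we get $A\subseteq\Z_{\geq 0}^n\setminus K\subseteq\widehat A$, so both extension problems make sense. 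I would also note at the outset that the proof of \Cref{prop:ExtByMax} never uses $B\subseteq\Z_{\geq 0}^n$, so it applies verbatim to any finite $B\subseteq\Z^n$ containing $A$.

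\textbf{The direction $\widehat A \Rightarrow \Z_{\geq 0}^n$ (the substantive one).} Given a midpoint convex, non-decreasing extension $\widehat h\colon\widehat A\to\R$ of $h$, apply \Cref{prop:ExtByMax} with $B=\widehat A$ to assume in addition $\widehat h\geq\mu$ on $\widehat A$ and $\widehat h\equiv\mu$ on $\widehat A\cap K$. Define $g\colon\Z_{\geq 0}^n\to\R$ by $g=\widehat h$ on $\Z_{\geq 0}^n\cap\widehat A$ and $g=\mu$ on $\Z_{\geq 0}^n\setminus\widehat A$; since $\Z_{\geq 0}^n\setminus\widehat A\subseteq K$ and $\widehat h\equiv\mu$ on $\widehat A\cap K$, this equals ``$g\equiv\mu$ on $K$, $g=\widehat h$ on $\Z_{\geq 0}^n\setminus K$'', so $g$ is well defined, $\geq\mu$ everywhere, and extends $h$. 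For monotonicity: if $x-y\in C$ with $x,y\in\Z_{\geq 0}^n$ then either $y\in K$ (so $g(x)\geq\mu=g(y)$), or $x,y\notin K$ (both lie in $\Z_{\geq 0}^n\setminus K\subseteq\widehat A$, and $\widehat h$ is non-decreasing there), while $x\in K$, $y\notin K$ is impossible because $a'-y=(a'-x)+(x-y)\in C$ for all $a'\in A$ would put $y$ in $K$. For midpoint convexity: let $a_1+a_2=2b$ in $\Z_{\geq 0}^n$; if $b\in K$ then $g(a_1)+g(a_2)\geq 2\mu=2g(b)$; if $b\notin K$ and $a_1,a_2\notin K$ then all three points are in $\widehat A$ and this is midpoint convexity of $\widehat h$; and if $b\notin K$ but $a_1\in K$, then convexity of $K$ forces $a_2\notin K$, so $a_2,b\in\Z_{\geq 0}^n\setminus K$ and hence $a_1=2b-a_2\in\widehat A$ \emph{by the very definition of $\widehat A$} — again all three points lie in $\widehat A$ and $g$ agrees with $\widehat h$ on them. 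Thus $g$ is the desired extension.

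\textbf{The direction $\Z_{\geq 0}^n \Rightarrow \widehat A$.} Given a midpoint convex, non-decreasing extension $\widehat h\colon\Z_{\geq 0}^n\to\R$, apply \Cref{prop:ExtByMax} (on a large finite box) to assume $\widehat h\geq\mu$ and $\widehat h\equiv\mu$ on $K$. Restricting to $\widehat A\cap\Z_{\geq 0}^n$ handles all orthant points of $\widehat A$, so it remains to assign values at the finitely many $c\in\widehat A$ with a negative coordinate. I would set $\widehat h(c):=\widehat h(c^+)$ where $c^+=(\max(c_i,0))_i$ is the projection of $c$ onto $\Z_{\geq 0}^n$: midpoint convexity of the resulting function on $\widehat A$, together with being non-increasing in coordinate directions, follows from the argument of \Cref{lem:extensioncubical} applied with the half-infinite box $[0,\infty)^n$; and using that $C\supseteq\R_{\leq 0}^n$ (hence $C$ is closed under scaling down any positive coordinate of a vector towards $0$) one checks that $d-c\in C$ with $d\in\Z_{\geq 0}^n$ implies $d-c^+\in C$, so $\widehat h(d)\geq\widehat h(c^+)$. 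The remaining order relations of $C$ involving non-orthant points are handled by applying $(\cdot)^+$ and invoking the finiteness of $\widehat A$ together with the fact that, in any bounded region, $C$ is only a bounded enlargement of $\R_{\leq 0}^n$.

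\textbf{Main obstacle.} The bookkeeping in the first direction is routine once one has the normalization $\widehat h\equiv\mu$ on $K$; the key point there is the combinatorial observation that a midpoint relation $2b=a_1+a_2$ with $b\notin K$ and $a_1\in K$ automatically has $a_1\in\widehat A$, so $\widehat A$ already records every constraint needed to propagate an extension. The genuinely delicate step is the second direction: verifying that the values $\widehat h(c^+)$ assigned at lattice points of $\widehat A$ outside the orthant are compatible with \emph{all} order relations coming from $C$, not merely the coordinatewise ones. This is precisely where the hypothesis that $-C$ contains the nonnegative orthant in its interior is used — it forces $\widehat A$ to be finite and keeps $C$ close to $\R_{\leq 0}^n$ — and I expect this to be the part requiring the most care.
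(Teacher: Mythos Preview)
Your argument for the substantive direction ($\widehat A \Rightarrow \Z_{\geq 0}^n$) is correct and essentially identical to the paper's: normalize via \Cref{prop:ExtByMax}, extend by the constant $\mu$ outside $\widehat A$, and run the same case analysis on whether the points of a midpoint triple lie in $K$. The key observation---that if $b\notin K$ and one endpoint lies in $K$ then the other endpoint and $b$ lie in $\Z_{\geq 0}^n\setminus K$, forcing the first endpoint into $\widehat A$---is exactly what the paper uses.

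The gap is in your treatment of the reverse direction. Your extension $\widehat h(c):=\widehat h(c^+)$ does \emph{not} preserve midpoint convexity, and the analogy with \Cref{lem:extensioncubical} fails because the roles are reversed: there one extends a non-increasing convex function from a box outward (toward larger coordinates, where the function is already small), whereas here you extend toward negative coordinates, where the function would have to be \emph{larger} to stay convex. Concretely, in dimension one take $\widehat h(k)=-k$ on $\Z_{\geq 0}$ (non-increasing, midpoint convex) and extend by $\widehat h(c)=\widehat h(0)=0$ for $c<0$; then $\widehat h(-1)+\widehat h(1)=0+(-1)=-1<0=2\widehat h(0)$, so midpoint convexity fails at the triple $(-1,0,1)$. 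The same obstruction appears in your setting whenever a midpoint triple in $\widehat A$ has one endpoint with a negative coordinate and the midpoint in $\Z_{\geq 0}^n$.

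The paper does not prove this direction at all. In fact the worked examples immediately following the proposition compute $\widehat A$ \emph{intersected with} $\Z_{\geq 0}^n$, and the proof of the ``if'' direction only ever uses points of $\widehat A$ that lie in $\Z_{\geq 0}^n$ (since the midpoint triples under consideration already sit in $\Z_{\geq 0}^n$). Read this way, the ``only if'' direction is trivial by restriction, and no extension to negative-coordinate points is needed. So the resolution is not to repair your $c^+$ construction but to observe that $\widehat A\cap\Z_{\geq 0}^n$ suffices for both directions.
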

Note that taking $a = b$ shows that $\widehat{A}$ contains $ \Z_{\geq 0}^n\backslash K$. 
\begin{proof}
Suppose that $h$  can be extended to a midpoint convex, non-decreasing function 
$\widehat{h}:\widehat{A}\to \R$. 
By \Cref{prop:ExtByMax}, we can assume that $\widehat{h}(b) \geq \min_{a\in A}h(a)$
for all $b\in \widehat{A}$ and  $\widehat{h}(b) = \min_{a\in A}h(a)$ for all 
$b\in K\cap \widehat{A}$. 
We then define an extension $\widehat{h}: \Z_{\geq 0}^n\to \R$ by defining $\widehat{h}(b) = \min_{a\in A}h(a)$ for all $b\not\in \widehat{A}$ and 
claim that this extension is both midpoint convex, non-decreasing function. 

To see that it is non-decreasing, suppose that $b,c\in \Z_{\geq 0}^n$ with $c-b\in C$.
If $b\in K$, 
and $\widehat{h}(b) = \min_{a\in A}h(a)$ is the minimum value taken by $\widehat{h}$ on all of $\Z_{\geq 0}^n$. So in particular, 
$\widehat{h}(b) \leq \widehat{h}(c)$. 

If $b\not\in K$, then there is some $a\in A$ for which $b \not\in a-C$. 
This implies that $c$ also does not belong to $a-C$, since $b= c + (b-c)$ and $b-c\in (-C)$. 
If neither $b$ nor $c$ belong $K$, then they both belong to $ \widehat{A}$ 
and $\widehat{h}(b) \leq \widehat{h}(c)$ by assumption. 

To see that $\widehat{h}$ is midpoint convex, suppose that for some $a,b,c\in \Z_{\geq 0}^n$, $a + c = 2b$. 
Note that if  $b\in K$, then the midpoint inequality 
$2\widehat{h}(b) \leq \widehat{h}(a)+\widehat{h}(c)$ is immediately satisfied, 
If $a$ and $c$ belong to $K$, then by convexity, so does $b$.
Otherwise, $b$ and at least one of $\{a,c\}$ belong to $\Z_{\geq 0}^n\backslash K$. Without loss of generality, suppose  $a,b\in \Z_{\geq 0}^n\backslash K$.   Then, by construction, $c = 2b-a \in \widehat{A}$, giving that $2\widehat{h}(b) \leq \widehat{h}(a)+\widehat{h}(c)$. 
\end{proof}

\begin{example}
Consider the exponents $A = \{(0,0), (1,0),(0,1),(1,1)\}$ and the semialgebraic set $S = \{(x_1,x_2)\in \R_{\geq 0}^2 : x^2 < y < x^{1/2}\}$. 
Then $\trop(S)$ is a cone with extreme rays $(-2,-1)$, $(-1,-2)$ and $-C = -\trop(S)^{\vee}$ is a convex cone with extreme rays $(-1,2)$ and $(2,-1)$, which strictly contains the positive orthant. 
Following the construction above, we take $K = \cap_{a\in A}(a-C)$ to be the intersection
of all translations of $-C$ by the points of $A$. In this case, this is just the single translate, $(1,1) - C$. 
There are only finitely many points in $(\Z_{\geq 0}^2 \backslash K)\cup A$, namely $\{(0,0), (1,0), (2,0), (0,1), (1,1), (0,2)\}$.
We then take $\widehat{A} =  \left\{2b-a : a, b \in (\Z_{\geq 0}^2 \backslash K)\cup A\right\}\cap\Z_{\geq 0}^2$, which gives 
\[
\widehat{A} = \{(0,0),(1,0),(2,0),(3,0),(4,0),(0,1),(1,1),(2,1),(0,2),(1,2), (0,3),(0,4)\}.
\]
By \Cref{prop:finiteExt}, a function $h:A\to \R$ can be extended to a function $\widehat{h}:\Z_{\geq 0}^n\to \R$
that is midpoint convex and non-decreasing function with respect to $C = \trop(S)^{\vee}$ 
 if and only if it can 
be extended to a midpoint convex, non-decreasing function 
$\widehat{h}:\widehat{A}\to \R$. 
A computation shows that this set of functions, with is cut out by the six inequalities 
\begin{align*}
&h(1,0)\geq h(1,1), \ \ h(0,0) + h(1,0) \geq 2 h(0,1), \ \  2 h(0,0) + h(1,1) \geq 3 h(1,0),  \\
 &h(0,1)\geq h(1,1), \ \ h(0,0) + h(0,1) \geq 2 h(1,0), \ \ 2 h(0,0) + h(1,1) \geq 3 h(0,1).
\end{align*}
The last inequality, for example, is a convex combination of the inequalities 
$\widehat{h}(1,1) \geq \widehat{h}(0,3)$, $\widehat{h}(0,1) + \widehat{h}(0,3) \geq 2 \widehat{h}(0,2)$ and $\widehat{h}(0,0) + \widehat{h}(0,2) \geq 2 \widehat{h}(0,1)$ on the set of non-decreasing, midpoint convex functions $h: \widehat{A} \to \R$.
The extension to a function $\widehat{h}: \Z_{\geq 0}^2 \to \R$ is given by $\widehat{h}(\alpha) = h(1,1)$ for all $\alpha\not\in \widehat{A}$. 
\end{example}

\begin{example}
For a less symmetric example, consider the Motzkin configuration of moments, $A = \{(0,0),(1,1),(1,2),(2,1)\}$. 
over the semialgebraic set $S = \{(x,y)\in \R_{\geq 0}^2 : y^2 \geq x \geq y^3\}$. 
The cone $-C = -\trop(S)^{\vee}$ has extreme rays spanned by $(1,-2)$ and $(-1,3)$. 
In this case, the intersection $K$ of all translates of $-C$ by the points in $A$ 
equals the translate $(2,1) - C$. 

The set $(\Z_{\geq 0}^2 \backslash K)\cup A$ is finite and given by 
$\{(0,j) : 0\leq j\leq 6\} \cup \{(1,j) : 0\leq j\leq  3\} \cup \{(2,0),(2,1)\}$. To construct the set $\widehat{A}$, 
we take the set of all points completing a midpoint triple with two out of three points coming from this set, giving 
\[
\widehat{A} = \{(0,j) : 0\leq j\leq 12\}\cup  \{(i,j) : i\in \{1,2\}, 0\leq j\leq  6\}\cup  \{(i,j) : i\in \{2,4\}, 0\leq j\leq  2\}.
\]
We project the $33$-dimensional 
convex cone of functions $\widehat{h}:|\widehat{A} |\to \R$ that are midpoint convex and non-decreasingwith respect to $C = \trop(S)^{\vee}$ onto the function values in $A$. This gives a four dimensional cone defined by the inequalities
\begin{align*}
&h(0,0) + 3 h(1,2) \geq 4h(1,1), \ \ 4 h(0,0) + h(1,2) + 5 h(2,1) \geq 10 h(1,1),\\ 
&2h(1,1) + h(2,1) \geq 3 h(1,2), \ \ \text{ and } h(1,2) \geq h(2,1). 
\end{align*}
For example, the second inequality is a convex combination of the inequalities 
$\widehat{h}(1,2) \geq \widehat{h}(0,5)$, $4\widehat{h}(0,0) + \widehat{h}(0,5) \geq 5\widehat{h}(0,1)$ 
and $\widehat{h}(0,1) + \widehat{h}(2,1) \geq 2\widehat{h}(1,1)$ on the function values of 
$\widehat{h}:\widehat{A}\to \R$.  
This four-dimensional cone has a one-dimensional lineality space spanned by $(1,1,1,1)$. 
In $\R^4/\R(1,1,1,1)$ it is a pointed cone over a quadrilateral with extreme rays  
\[(h(0,0), h(1,1), h(1,2),h(2,1)) = (1,0,0,0), \ (20,5,0,0), \ (26,11,6,0), \ (7,3,2,0).\]
Note that $(h(0,0), h(1,1), h(1,2),h(2,1))= (26,11,6,0)$ does not belong to the tropicalization of the moment cone 
$\trop(M_A(S))$ because it is not convex: 
\begin{equation*}
3 h(1,1)=  33 > 32  = h(0,0) + h(1,2) + h(2,1).
\end{equation*}
\end{example}

% !TEX root =  MAIN.tex

\section{Further Directions}\label{sec:fd}
To conclude, we highlight some 
open questions. 

One of the main questions left open in the theory developed above is whether 
for an arbitrary set $S\subset \R_{\geq 0}^n$ defined by binomial inequalities and 
a set $A\subset \Z_{\geq}^n$, 
the tropicalizations of the $A$-pseudo-moments of $S$ stabilize as the degree bounds increase. Here is a precise version of this question with the same truncation that we used in \Cref{sec:SOS}.

\begin{question}\label{ques:stabilization}
Let $S\subseteq \R_{\geq 0}^n$ defined by pure binomial inequalities $g_i = x^{a_i} - x^{b_i}$ 
as in \Cref{thm:genpseudomom}.
For any arbitrary finite set $A\subset \Z_{\geq 0}^n$, does there exist $D_A\in \Z_{\geq 0}$ such that 
for all $d\geq D_A$, 
\[
\pi_A( \trop(Q_{D_{A}}^{\vee})) =  \pi_A( \trop(Q_{d}^{\vee}))?
\]
\end{question}
Here $Q_{d}$ denotes  ${\rm QM}_{d}(g_1, \hdots, g_r)$. The question also makes sense, of course, for other ways of bounding the degrees of the sum-of-squares multipliers.

For $S= [0,1]^n$, this is a consequence of \Cref{cubeandcubicalhull}. 
Other sufficient conditions are given \Cref{sec:stab}. 
We note that by \Cref{thm:genpseudomom}, this can be phrased as a question purely 
about polyhedral combinatorics. In the language of \Cref{sec:convexity}, \Cref{ques:stabilization}
can be restated as follows: 

\begin{question}
Let $C$ be a rational polyhedral cone. 
Given an arbitrary finite set $A\subset \Z_{\geq 0}^n$, 
does there exist a finite set $E\subset \Z_{\geq 0}^n$ containing $A$ 
so that for all $E'\supset E$,  
\[
\pi_{A}(\mathcal{M}_{E,C}) = \pi_{A}(\mathcal{M}_{E',C})? 
\]
\end{question}

In the questions above, the set $S$ is fixed and the degree bounds change. 
Another type of stabilization one might consider comes from changing the set $S$. 

\begin{example}\label{ex:motzstab}
Consider the Motzkin configuration $A = \{(0,0),(1,2),(2,1),(1,1)\}$ and the 
sets $S_n = \{(x,y)\in \R_{>0}^2 : y^{n/(n+1)} \leq x \leq y^{(n+1)/n} \}$.  The set $S_n$ is symmetric in 
$x$, $y$ and approaches the line segment between $(0,0)$ and $(1,1)$ as $n\to \infty$. 
The cone $-C_n = -\trop(S_n)^{\vee}$ is spanned by the vectors $(-n, n+1)$ and $(n+1,-n)$. 

For any $d\geq 4$ and $n\geq 2$, we find the following inequalities on functions
$h:\Delta_d \to \R$ in $\mathcal{M}_{\Delta_d, C_n}$:
\[
\tfrac{3}{4} h(0,0) + \tfrac{5}{4}h(2,1) \geq 2 h(1,1), \ \
\tfrac{7}{4} h(2,1) + \tfrac{1}{4} h(0,0) \geq 2 h(1,2), \ \
h(2,1) + h(1,1) \geq 2 h(1,2)
\]
as well as their images under the reflection $h(i,j) \mapsto h(j,i)$. 
The first comes from the midpoint inequality on the triple $\{(0,1),(1,1),(2,1)\}$
and the inequalities  $h(0,1) \leq \frac{1}{4}(3h(0,0)+h(0,4))$ and $h(0,4)\leq h(2,1)$. 
The second and third from from the midpoint inequality on the triple $\{(0,3),(1,2),(2,1)\}$
along with the constraints 
$h(0,3) \leq  \frac{1}{4}(h(0,0)+3h(0,4))\leq \frac{1}{4}(h(0,0)+3h(2,1))$ and 
$h(0,3) \leq h(1,1)$, respectively. 

In $\R^4/\R(1,1,1,1)$, these six inequalities define a convex cone over a hexagon with extreme rays
\[
(h(0,0),h(1,1),h(1,2),h(2,1)) = (1,0,0,0), (8,3,0,0), (8,3,1,0),(8,3,0,1),(8,2,1,0),(8,2,0,1). 
\]
One can check that all of these extend to a mid-point convex function $\widehat{h}:\Z_{\geq 0}^2\to \R$ 
that is non-decreasing with respect to the cone $C_n$. Therefore, these points belong to tropicalization of the pseudo-moment cone for any degree bound $d\geq 4$.
Here we see a different kind of stabilization. 
For every $n\geq 2$, the tropicalization of the $A$-pseudomoment cone is the same. 

Note that the ray spanned by $(8,3,0,0)$ does not satisfy the AM-GM inequality $h(0,0) + h(1,2) + h(2,1) \geq 3 h(1,1)$. 
Since the cones $\pi_A(\mathcal{M}_{\Delta_d,C_n})$ stabilize at $n=2$, this 
inequality cannot be obtained in the limit as $n\to \infty$, even though it does hold 
for the $A$-pseudomoments of the limit set $S_{\infty} = \{(t,t): t\in [0,1]\}$. 
\end{example}
%
%\jy{This example from Section 3 has been cut.}
%We contrast this behavior with that of  \Cref{ex:stabilizationOfCones}, which 
%again concerns the pseudomoments of the Motzkin configuration $A = \{(0,0),(1,1),(1,2),(2,1)\}$
%but with the semialgebraic sets  $S_n = \{(x,y)\in \R_{>0}^2: y^n \geq x \geq y^{n+1}\}$ 
%and cones $C_n = \trop(S_n)^{\vee}$.
%While for any finite $n$, the convexity inequality $ h(0, 0) + h(1, 2) + h(2, 1)\geq 3h(1, 1)$ for functions $h:A\to \R$ 
%is not implied by mid-point convexity and the non-decreasing condition with respect to $C_n$, 
%it can be obtained as a limit of such inequalities as $n\to \infty$. 
It would be interesting to better understand this behavior.

%the distinction between the behavior in these two examples. 

\begin{question}
Give sufficient conditions for a sequence of semialgebraic sets
$(S_n)$ with limit set $S$
such that all convexity inequalities
$\trop\left(M_A(\R_{\geq 0}^n)\right)$
are limits of inequalities in the tropical pseudomoment cone of $S_n$.
\end{question}

Tropicalizations of not necessarily semialgebraic sets with the Hadamard property were considered in \cite{BRST} and \cite{BR21}. In particular it was shown in \cite[Lemma 2.2]{BR21} that the tropicalization of a subset of $\R^{n}_{\geq 0}$ which has the Hadamard property and is closed under addition is a max-closed convex cone, i.e. it is a convex cone which is additionally closed under tropical addition. More generally, we ask whether the tropicalization of a convex cone is necessarily max-closed.

\begin{question}
Let $C\subset \RR^n_{\geq 0}$ be a convex cone. Is it true that $\trop C$ is max-closed?
\end{question}

Many of the tropicalized sets in this paper are both tropically convex 
and convex in the classical sense.  Max-closed convex cones contained in $\RR^n_{\geq 0}$ were studied in \cite{BR21} and \cite{dokuchaev2022cone}. One way of describing a convex body is 
by its extreme points, whose convex hull recovers the set. 
For sets that are convex in both notions, one can ask about the minimal 
generating set from which one can recover the set.  
To this end, we define the double-hull of a set $S\subset \R^n$ to be the tropical convex hull of the convex hull of $S$. 

\begin{question}
What are the double-hull extreme rays (that is the minimal generating set for the double-hull operation) of the convex cones $\mathcal{K}_A$ and $\mathcal{M}_A$, modulo their lineality spaces? 
\end{question}

\bibliographystyle{amsalpha}
\bibliography{tropSOS}

\end{document}